\documentclass[12pt]{amsart}
\usepackage[utf8]{inputenc}
\usepackage[letterpaper,total={6.5in,9in},top=1in, left=1in, right=1in, bottom=1in]{geometry}
\usepackage{fancyhdr}
\usepackage{libertine}
\usepackage[libertine]{newtxmath}
\usepackage{color,amsmath,mathtools,graphicx}
\usepackage{bbold}
\usepackage{epsfig,fancyhdr}
\usepackage{dsfont} 
\usepackage{graphicx,amsmath,mathtools,float}
\usepackage{epsfig,color,fancyhdr,setspace}
\usepackage{dsfont}
\usepackage{epstopdf}
\usepackage{import}
\usepackage{lineno}
\usepackage{stackrel,dsfont}
\usepackage[allcolors = blue,colorlinks]{hyperref}
\usepackage[abs]{overpic}
\usepackage[center]{caption}
\usepackage{subcaption}
\usepackage{tikz-cd}
\usepackage{enumitem}
\counterwithin{figure}{section}
\usepackage[utf8]{inputenc}
\usepackage[T1]{fontenc}
\usepackage{ragged2e}
\usepackage{extarrows}
\usepackage{rotating}
\allowdisplaybreaks
\newtheorem{theorem}{Theorem}[section]
\newtheorem{lemma}[theorem]{Lemma}
\newtheorem{definition}[theorem]{Definition}
\newtheorem{example}[theorem]{Example}
\newtheorem{proposition}[theorem]{Proposition}
\newtheorem{remark}[theorem]{Remark}
\newtheorem{corollary}[theorem]{Corollary}

\newtheorem{innercustomgeneric}{\customgenericname}
\providecommand{\customgenericname}{}
\newcommand{\newcustomtheorem}[2]{%
  \newenvironment{#1}[1]
  {%
   \renewcommand\customgenericname{#2}%
   \renewcommand\theinnercustomgeneric{##1}%
   \innercustomgeneric
  }
  {\endinnercustomgeneric}
}

\newcustomtheorem{customthm}{Theorem}
\newcustomtheorem{customlemma}{Lemma}

\title{KAUFFMAN BRACKET SKEIN HOMOLOGY FROM HEEGAARD SPITTINGS}

\author{Yin Tian}
\address{School of Mathematical Sciences, Beijing Normal University; 
Laboratory of Mathematics and Complex Systems, Ministry of Education, Beijing 100875, China}
\email{{\rm yintian@bnu.edu.cn}}

\author{Xiao Wang} 
\address{Jilin University, Changchun, China}
\email{\rm wangxiaotop@jlu.edu.cn}

\keywords{Kauffman bracket skein module, Heegaard splitting, quantum topology.}
\subjclass[2020]{Primary: 57K31. Secondary: 57K10}

\begin{document}

\maketitle

\begin{abstract}
We extend the Kauffman bracket skein module of three manifolds to a homology theory in a combinatorial way. The resulting homology depends on Heegaard splittings of three manifolds.     
\end{abstract}

\section{Introduction}
\iffalse
History of KBSM; P's surjection map from 2-handle attachments. 

Natural to ask whether it is possible to extend this map to a chain complex. 
\fi

The Kauffman bracket skein module theory was introduced independently by Przytycki\cite{smof3} and Turaev\cite{turaevsolidtorus} around $1990$. It is an invariant of three dimensional manifolds which is deeply related to $SL(2,\mathbb{C})$ character varieties, and the quantum Techemuller space. Some of the early researches are listed here \cite{Tur2, Lic, KL, BHM, Rob, Gel, sl2cbullock, saofsurfaces, fundamentals, connsum}. In recent years, it has received extra attention due to the work in \cite{BW, FKL, GJS}. Especially, Gunningham, Jordan, Safronov confirmed the Witten's finiteness conjecture for Kauffman bracket skein modules in \cite{wittenresolved}, i.e. the Kauffman bracket skein module of any closed oriented $3$-manifold is finite dimensional over $\mathbb Q(A)$. 
This makes it possible to build a $3+1$-topological quantum field theory with the Kauffman bracket skein module as the vector spaces associated to the closed three manifolds.

In \cite{fundamentals}, Przytycki developed a method to compute the Kauffman bracket skein module as follows. Suppose a three manifold is given by adding $2$-handles and $3$-handles to a handlebody $H_g$.  When adding a collection of $2$-handles along  simple closed curves  $\mu$ on the boundary of $H_g$, we obtain a three manifold $(H_{g})_{\mu}$. Then the homomorphism on the Kauffman bracket skein modules induced by the embedding $H_{g}\hookrightarrow (H_{g})_{\mu}$ is surjective with the kernel given by the handle sliding relations. Furthermore, adding a $3$-handle does not change the Kauffman bracket skein module. Thus, the Kauffman bracket skein module of the three manifold is isomorphic to the quotient of the Kauffman bracket skein module of $H_{g}$ by the handle sliding relations introduced by the $2$-handle attaching.\footnote{This is also true when we replace $H_g$ by an arbitrary three manifold with boundary.}  This gives a presentation of the Kauffman bracket skein module of a three manifold via its handle decomposition, or a Heegaard splitting.  

In this article, we extend the Kauffman bracket skein module to a homology theory via handle decompositions. More precisely, we extend the surjective homomorphism on the Kauffman bracket skein modules induced by $H_{g}\hookrightarrow (H_{g})_{\mu}$ to a chain complex. So the zeroth homology is the Kauffman bracket skein module of $M$ computed in the way as above.

Consider a Heegaard diagram $(\Sigma,\alpha,\beta)$ of $M$, where $\Sigma$ is the Heegaard surface, and $\alpha, \beta$ denote the attaching curves for the two handlebodies.  
Let $H$ denote the handlebody associated to $\alpha$, and $\mu$ denote $\beta$. We write the Heegaard diagram as $(H,\mu)$. So an isotopy class of Heegaard splittings is the same as an equivalence class of Heegaard diagrams $(H,\mu)$ up to isotopies and handleslides of $\mu$. 
To any Heegaard diagram $(H,\mu)$, we associate a chain complex $C_{*}(H,\mu)$, called the {\em Heegaard skein complex}. Its homology is called the {\em Heegaard skein homology}. Our main result is the following. 

\begin{theorem}\label{main}
The isomorphism class of the Heegaard skein complexes is invariant under isotopies and handleslides of $(H,\mu)$. 
\end{theorem}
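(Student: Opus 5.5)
The proof has two parts: invariance under isotopy and invariance under a single handleslide. In each case I would write down an explicit chain map $C_*(H,\mu)\to C_*(H,\mu')$ and an explicit inverse. The point is that the theorem claims isomorphism of complexes, not merely quasi-isomorphism, so I expect every map to be induced by a diffeomorphism or by an invertible "triangular" change of generators. The plan assumes the following shape for the Heegaard skein complex, which I have not seen defined: $C_*(H,\mu)$ is assembled functorially from Kauffman bracket skein modules of $H$ and of $H$ with some of the $2$-handles attached along the curves of $\mu$. Its differentials are alternating sums of the handle sliding maps $x\mapsto x-\mathrm{sl}_{\mu_i}(x)$, sliding a skein element across the attaching disk $D_i$ of $\mu_i$. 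In degree $0$ this recovers Przytycki's presentation of the skein module.

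\emph{Isotopy.} Let $\mu_t$ be an isotopy of $\mu$ in $\Sigma=\partial H$. I would extend it to an ambient isotopy $\phi_t$ of $\Sigma$, and then to a diffeomorphism $\Phi$ of $H$ with $\Phi(\mu)=\mu'$, using a collar of $\partial H$. Skein modules are functorial under diffeomorphisms, so $\Phi$ gives isomorphisms $\Phi_*:\mathcal S(H)\to\mathcal S(H)$, and likewise for each partially $2$-handled manifold. Since $\Phi$ carries the disks $D_i$ to the new disks $D_i'$, it intertwines the sliding maps: $\Phi_*\circ \mathrm{sl}_{\mu_i}=\mathrm{sl}_{\mu_i'}\circ\Phi_*$. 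Hence $\Phi_*$ commutes with the differentials and is a chain isomorphism. The only thing to check is that the sliding maps depend only on the isotopy class of the attaching curve in $\Sigma$; this follows because isotopic skein elements are equal.

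\emph{Handleslide.} Suppose $\mu_i$ is slid over $\mu_j$ along a band $b$, producing $\mu_i'=\mu_i\#_b\mu_j$, with all other curves unchanged. After attaching the $2$-handle along $\mu_j$, the attached manifolds for $\mu$ and $\mu'$ are diffeomorphic, and the new disk $D_i'$ is isotopic to the band sum $D_i\#_b D_j$. Consequently, sliding across $D_i'$ equals sliding across $D_i$ followed by sliding across $D_j$, in the appropriate order. This gives the relation $1-\mathrm{sl}_{\mu_i'}=(1-\mathrm{sl}_{\mu_i})+\mathrm{sl}_{\mu_i}(1-\mathrm{sl}_{\mu_j})$, up to conjugation by the identification of manifolds. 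I would use this identity to define the map $F$: it is the identity on summands not involving index $i$, and on summands involving index $i$ it is the identity plus a correction term with coefficient $\mathrm{sl}_{\mu_i}$ that shifts from the $i$-component to the $j$-component. With respect to a suitable filtration by the index set, $F$ is unitriangular, so it is invertible. The inverse is the analogous map for the reverse slide of $\mu_i'$ over $\mu_j$.

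\emph{Main obstacle.} The hard step is verifying $dF=Fd$ on the summands where both $i$ and $j$ appear. There the correction terms interact with the differential components for $\mu_j$, so I need to show that sliding across $D_j$ commutes with sliding across $D_i$ and with the band sum. I would first try to prove this geometrically: the disks $D_i$ and $D_j$ can be made disjoint, and the relevant slides are supported in disjoint neighborhoods, so they commute in the skein module. Signs would then be handled by the same Koszul-type sign convention used in the definition of $C_*$. If a direct check fails in the top degree, the fallback is to define $F$ through the diffeomorphism of the $2$-handled manifolds in that degree and argue that it agrees with the triangular formula in lower degrees.
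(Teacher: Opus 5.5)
Your proposal is built on a guessed model of the complex that is not the paper's, and that guess is not well posed. There is no endomorphism $\mathrm{sl}_{\mu_i}$ of $\mathcal S(H)$: sliding a link over a $2$-handle requires choosing a band from the link to $\mu_i$, and different bands give different skeins. Przytycki's lemma gives a \emph{relation}, not a map. This is exactly why the paper, following Bullock--Lo Faro, uses relative skein modules.

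In the actual complex, $C_n(H_{g_0},U)$ is the relative skein module of the handlebody itself. Its boundary points are $n$ pairs of framed points $(a_{i,k},b_{i,k})$ on parallel copies $\mu_{i,k}$ of the attaching curves, filled from the bottom, with any number of pairs allowed on a single $\mu_i$. So it is nonzero in every degree, not a finite cube over subsets of $\{1,\dots,g\}$. The face map $\partial_{i,k}(L)=S(L\cup\alpha_{i,k}-L\cup\beta_{i,k})$ closes one endpoint pair by one or the other arc of $\mu_{i,k}$, and these are combined with presimplicial signs. No chain group involves a $2$-handled manifold. Hence your identity $\mathrm{sl}_{\mu_i'}=\mathrm{sl}_{\mu_i}\mathrm{sl}_{\mu_j}$, and your fallback via a diffeomorphism of $H$ with a $2$-handle attached, have no chain-level meaning.

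Your isotopy argument does transfer: a diffeomorphism of $H$ supported in a collar carries all of $U$ to $U'$ and commutes with the closures and with $S$. But you also need independence of the ordering and orientations of the curves. The paper handles these with the maps $L\mapsto(-1)^{n_1n_2}\overline L$ and with reversal of the endpoint pairs plus half twists. This is needed both for the isomorphism class to be well defined and to bring an arbitrary slide into the standard local picture.

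For the handleslide, the shape you describe is right: identity plus corrections moving material from the $\mu_i$-part to the $\mu_j$-part, unitriangular for a filtration. But the content is missing. In the real complex the map is $f_n(L)=\sum_{T\subset\{1,\dots,n_1\}}(-1)^{\#T}L_T$. Here $L_T$ extends the strands ending at $(a_{1,t},b_{1,t})$, $t\in T$, by arcs parallel to $\mu_2$ to new pairs on $\mu_2$ placed above the $n_2$ occupied levels, and $\#T=(n_1+n_2)|T|-\sum_{t\in T}t-\binom{|T|}{2}$. The filtration is by $n_1$, not by index sets. The local setup must also be chosen so that $\alpha_1'$ is a push-off of $b_{12}\cup\beta_2\cup a_{12}$.

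The interaction you single out as the main obstacle, corrections against the $\mu_j$-differentials, is the easy case. We have $(\partial_{2,t}L)_T=\partial'_{2,t}(L_T)$ for $t\le n_2$, precisely because rerouted pairs sit above level $n_2$. The real work is the interaction of $\partial_{1,t}$ with rerouting, namely the identity
\[
(\partial_{1,t}L)_P=\partial'_{1,t-\langle t,P\rangle}(L_{P[t]})+\partial'_{2,n_2+1+\langle t,P\rangle}(L_{P[t]\cup\{t\}}).
\]
Two of the four terms on the right cancel by an isotopy, and the other two are isotopic to the two on the left. One must then check that the three sign exponents agree mod $2$ under the reindexing $P\mapsto P[t]$. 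Since none of this is carried out, the proposal does not prove the statement for the Heegaard skein complex as defined.
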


\begin{corollary}
The isomorphism class of the homology of the Heegaard skein complexes is an invariant of Heegaard splittings. 
\end{corollary}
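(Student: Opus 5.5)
We have to guess the construction of $C_*(H,\mu)$. A natural guess is a Koszul-type complex. Write $\mu=\{\mu_1,\dots,\mu_n\}$. For each $\mu_i$ there is a handle-sliding operator on $S(H)$: a skein $x$ is sent to $x - \mathrm{sl}_{\mu_i}(x)$, where one arc of $x$ is slid over the $2$-handle attached along $\mu_i$. The complex is then built from generators indexed by subsets $I\subseteq\{1,\dots,n\}$, with modules of skeins in $H$ decorated by the curves in $I$. The differential removes one $\mu_i$ with a sign, for instance via the difference between a skein with a parallel copy of $\mu_i$ and its slid version. By construction $C_0=S(H)$, and the cokernel of $d_1$ is Przytycki's quotient.

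I would prove Theorem \ref{main} by checking the two moves separately, and in each case writing down an explicit chain map together with its inverse.

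\textbf{Isotopies.} For an isotopy of $\mu$ on $\partial H$, or of the whole diagram, an ambient isotopy of $H$ carrying one diagram to the other induces isomorphisms of skein modules in each degree. It commutes with the differentials because the differentials are defined using only local pictures near $\mu$ and the handle-sliding construction, both of which are natural under diffeomorphisms. This step is routine; I would state it as a naturality lemma: a diffeomorphism $f\colon (H,\mu)\to(H',\mu')$ induces a chain isomorphism $f_*\colon C_*(H,\mu)\to C_*(H',\mu')$.

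\textbf{Handleslides.} Suppose $\mu_1$ is replaced by $\mu_1'=\mu_1\#_b\mu_2$, the band sum along an arc $b$. I would define a chain map
\[
\Phi\colon C_*(H,\mu)\to C_*(H,\mu')
\]
that is the identity on generators not involving $\mu_1$. On generators involving $\mu_1$, it rewrites the decoration $\mu_1$ in terms of $\mu_1'$ and $\mu_2$. Geometrically, a $\mu_1$-slide equals a $\mu_1'$-slide composed with a $\mu_2$-slide, since sliding over the handle attached along $\mu_1'$ amounts to sliding over both handles in turn. This produces an upper-triangular change of basis in the Koszul grading: a unitriangular matrix relative to the filtration by whether $\mu_2$ appears. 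The inverse map $\Psi$ comes from the reverse handleslide $\mu_1=\mu_1'\#_{\bar b}\mu_2$. I would check $\Psi\Phi=\mathrm{id}$ directly, using isotopy invariance to identify the doubly slid curve with $\mu_1$.

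\textbf{Main obstacle.} The key step is verifying $d\Phi=\Phi d$. This needs a commutation identity in $S(H)$ showing that slides over $\mu_1'$ and over $\mu_2$ interact compatibly, i.e. that
\[
\mathrm{sl}_{\mu_1'}\circ \mathrm{sl}_{\mu_2}=\mathrm{sl}_{\mu_2}\circ\mathrm{sl}_{\mu_1'}
\]
up to the correcting terms the differential accounts for. The Kauffman bracket is local, so slides of a framed link over two disjoint $2$-handle curves commute after isotopy in $H_\mu$. In $H$ itself, however, they commute only after resolving crossings between the slid band and the other curve. I would handle this first by reducing to a local model: a neighborhood of $\mu_1\cup b\cup\mu_2$, which is a genus-$2$ surface piece. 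In that model I would compute both compositions with the skein relation and match the terms. If exact commutation fails, the fallback is to weaken the claim to a chain homotopy equivalence. That would still give the corollary, but Theorem \ref{main} claims an isomorphism, so I would aim for the strict version first.
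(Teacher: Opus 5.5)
\textbf{Verdict: genuine gap.} Your top-level reduction matches the paper. The corollary is immediate from Theorem~\ref{main}, because chain-isomorphic complexes have isomorphic homology, and isotopy classes of splittings are exactly diagrams modulo isotopy and handleslides. The problem is that everything below that level concerns a complex that is not the paper's, and the guessed one does not work as you describe it.

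The assignment $x\mapsto x-\mathrm{sl}_{\mu_i}(x)$ is not a well-defined endomorphism of $\mathcal{S}_{2,\infty}(H)$. A slide depends on choosing an arc of $x$ and a band to $\mu_i$. The handle-sliding submodule $\mathcal J$ is instead the image of $\omega$ on the relative skein module $\mathcal{S}_{2,\infty}(H;a,b)$, $L\mapsto L\cup\alpha-L\cup\beta$. A single "slide operator" on a decorated copy of $\mathcal{S}_{2,\infty}(H)$ therefore does not have $\mathcal J$ as its image.

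This is why the paper builds the complex differently. $C_n(H,U)$ is spanned by relative framed links with $n$ pairs of endpoints on framed points $(a_{i,k},b_{i,k})$ of parallel copies $\mu_{i,k}$, filled from the bottom. The differential is $\partial_n=\sum_i(-1)^{\sum_{s<i}n_s}\sum_j(-1)^j S\circ W_{i,j}$. This is a presimplicial module, unbounded in degree, not a Koszul complex on subsets of $\{\mu_i\}$.

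You also omit independence of the auxiliary data. A splitting does not come with an ordering or orientations of the $\mu_i$, nor with base points. The paper handles this in Propositions~\ref{ai}, \ref{ordering} and \ref{orientation}, which give sign-twisted isomorphisms. For orientation reversal these also reverse the order of the pairs on $\mu_i$ and add half twists at the endpoints.

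The handleslide step, which you leave open with a fallback, is where the real content lies. The paper does it additively, at the level of single face maps, rather than by composing slides.

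\textbf{Setup.} Using Proposition~\ref{orientation}, put $\alpha_1,\alpha_2$ in a small disk. Choose $\mu_1'\supset\beta_1$ with $a_1'=a_1$ and $b_1'=b_1$. Then $\beta_1'=\beta_1$, and $\alpha_1'$ is a push-off of $b_{12}\cup\beta_2\cup a_{12}$.

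\textbf{Key identity.} Closing a pair on $\mu_1$ by $\alpha_1-\beta_1$ equals closing it on $\mu_1'$, plus a $\mu_2$-face applied after rerouting that pair along $b_{12}\cup a_{12}$ to a new top position on $\mu_2$. This is \eqref{keyeq}.

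\textbf{Chain map.} Set $f_n(L)=\sum_{T\subseteq\{1,\dots,n_1\}}(-1)^{\#T}L_T$, where $L_T$ reroutes the pairs indexed by $T$. The explicit sign $\#T$ is what makes $f_{n-1}\partial_n=\partial_n' f_n$ hold term by term. Bijectivity follows because $f_n(L)=L+(\text{terms with smaller }n_1)$, i.e.\ $f_n$ is unitriangular for the filtration by $n_1$.

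All of these identities hold by isotopy in $H$, for example by pulling an arc under $a_{2,1},b_{2,1}$; this is why the setup is chosen this way. So the crossing-resolution problem you anticipate does not occur. But your proposal contains none of the needed ingredients: the correct chain groups, the rerouting map, or the sign bookkeeping. As written, it does not prove the statement.
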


\begin{remark}\label{surface}
    Our definition of the Heegaard skein complex is based on a handlebody and $2$-handle attaching.  It is possible to define the Heegaard skein complex starting from thickened surface and attach $\alpha$ curves and $\beta$ curves.  This version is also invariant under isotopies and handleslides, of which the proof is transparent in a similar way.
\end{remark}

\iffalse
\begin{remark}
    Though in Theorem \ref{main} and Remark \ref{surface}, we are considering the Kauffman bracket skein chain complex and its homology, for other skein modules, our definition also works and the two homology theories are invariant under isotopies and handleslides since all skein relations happen locally.
\end{remark}
\fi

%\footnote{For those three manifolds with finitely many minimal genus Heegaard splittings, we can do enough stabilizations so that our homology only depends on the three manifolds themselves. However, the computational complexity will be quite high.}

Our construction of the Heegaard skein complex $C_{*}(H,\mu)$ is combinatorial. 
The chain group $C_{n}(H,\mu)$ is the {\it relative} Kauffman bracket skein module of the handlebody $H$ with $2n$ points on the boundary. The differential is induced by completing relative skeins to skeins through the attaching curves $\mu$. 
As a result, $C_{n}(H,\mu)$ is nonzero for all $n \ge 0$, except for the genus zero Heegaard splitting of $S^3$. We expect that Heegaard skein homology is also nonzero for all $n$. 

Computation of Heegaard skein homology is not easy. 
We compute the first homology associated to the genus one Heegaard splittings of the lens spaces $L(p,1)$. See Proposition \ref{p1}. 
For $p=1$, we get a genus one Heegaard splitting of $S^3$ as a stabilization of the genus zero splitting. The first homologies before and after the stabilization are not isomorphic. 

\begin{corollary}
The Heegaard skein complex is in general not invariant under the stabilization of Heegaard splittings.
\end{corollary}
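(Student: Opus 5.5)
The statement to prove is the last corollary: the Heegaard skein complex is in general not invariant under stabilization. I would prove it by exhibiting a single counterexample, namely $S^3$. I compare its genus zero Heegaard splitting $(B^3,\emptyset)$ with its genus one splitting $(H_1,\mu)$, where $\mu$ is a meridian-dual curve. The latter is the stabilization of the former, and it is the case $p=1$ of the lens spaces $L(p,1)$. By Theorem \ref{main}, the isomorphism class of $C_*(H,\mu)$, and hence of its homology, depends only on the isotopy and handleslide class of the diagram. So it suffices to show that some homology group differs between the two diagrams; I will use $H_1$.

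First, the genus zero side. The handlebody is a $3$-ball and $\mu=\emptyset$, so the only relevant boundary points are the $2n$ points on $S^2$. The construction of $C_n$ is the relative skein module of $H$ with $2n$ points placed on the attaching curves $\mu$. With $\mu$ empty there are no places for such points, so $C_n(B^3,\emptyset)=0$ for $n\geq 1$. This matches the paper's remark that $C_n$ vanishes for $n\ge1$ only in this case. Consequently $H_1(B^3,\emptyset)=0$, while $H_0=\mathcal{S}(S^3)\cong\mathbb{Q}(A)$.

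Second, the genus one side. Here I invoke Proposition \ref{p1}, the computation of the first Heegaard skein homology for the genus one splittings of $L(p,1)$, and specialize it to $p=1$. I need that the resulting group is nonzero. If Proposition \ref{p1} gives an explicit formula, I just evaluate it at $p=1$.

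If the formula instead needs to be checked directly, I would argue as follows.
\begin{itemize}
\item $C_1(H_1,\mu)$ is the relative skein module of the solid torus with two points on $\mu$. It has a basis of arcs winding $k$ times around the core, together with closed core components.
\item The differential $C_1\to C_0$ closes an arc along $\mu$ in the two possible ways.
\item The differential $C_2\to C_1$ joins pairs of points along $\mu$.
\item To show $H_1\neq 0$, I would find an explicit cycle in $C_1$, for instance a combination of the two closings of an arc that cancel in $C_0$. I would then show it is not a boundary, using a grading such as winding number or degree in $A$ that the image of $C_2$ cannot reach.
\end{itemize}

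The main obstacle is this last non-boundary verification, since $C_2$ is infinite rank and the differential mixes components. The plan is to filter by homological winding in $H_1(\text{solid torus})$ and check the claim on the associated graded. Once $H_1(H_1,\mu)\neq0=H_1(B^3,\emptyset)$, the two complexes are not isomorphic, which proves the corollary.
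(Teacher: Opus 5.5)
Your proposal is correct and follows the paper's own argument. You compare the genus zero splitting of $S^3$, where $C_n=0$ for $n\ge 1$ and so $H_1=0$, with its stabilization, the $p=1$ case of Proposition \ref{p1}, where $H_1$ has rank $\lfloor 1/2\rfloor+1=1$. Since Proposition \ref{p1} gives this explicit formula, your fallback direct verification is unnecessary; also note that $H_0\cong\mathbb{Z}[A^{\pm1}]$ here, not $\mathbb{Q}(A)$, because the paper works over $\mathbb{Z}[A^{\pm1}]$.
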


So the Heegaard skein homology is not a topological invariant of three manifolds. 
Although we lose the topological invariance, we expect that the Heegaard skein homology can be used to distinguish Heegaard splittings of a given three manifold. 

The Kauffman bracket skein module is closely related to a quantization of the $SL(2,\mathbb{C})$-character variety\cite{sl2cbullock}. By specializing the quantum variable $A=-1$, the chain group $C_{n}(H,\mu)$ can be viewed as the coordinates ring of the moduli space of {\it framed} flat $SL(2,\mathbb{C})$-bundles over the handlebody $H$ with framings at $2n$ points. In particular, $C_{n}(H,\mu)$ only depends on $H$. 
The differential is induced by an operation of extending a bundle to a framed bundle through the parallel transportation along the $2$-handle attaching curves $\mu$. 

Abouzaid-Manolescu developed a homology theory generalizing the Kauffman bracket skein module \cite{AM}. The construction uses Heegaard splittings and Floer theory. Their homology is a topological invariant. It is interesting to explore the possible connections between their homology and ours.   

This article is organized as follows. In Section $2$, we recall the definition and basic properties of the Kauffman bracket skein module. In Section $3$, we give the definition of the Heegaard skein complex and prove its invariance under all the choices made when defining the chain complex. In Section $4$, we show that the Heegaard skein complex is invariant under $2$-handle sliddings. In Section $5$, we discuss how the Heegaard skein complex changes under a stabilization. In Section $6$, we compute the Heegaard skein homology associated to the genus one Heegaard splttings of the lens spaces $L(p,1)$.

\iffalse
we can instead consider the minimal genus Heegaard splittings up to homeomophism of a given three manifold $M$. 
Let $MHS(M)$ denote the set of homeomorphism classes of the minimal genus Heegaard splittings of $M$. 
A key fact is that $MHS(M)$ is a finite set.

The main goal is to study the collection of finitely many Heegaard skein complexes
$$\{C_*(s), s \in MHS(M)\}.$$

\begin{remark}
All Heegaard splittings are assummed to be of minimal genus.
\end{remark}

{\em Questions:} 
\begin{enumerate}
\item Explicit computations of $C_*(s)$ for lens spaces, Seifert manifolds of genus two.
\item Compare different Heegaard splittings. The minimal genus is two due to Waldhausen's Uniqueness Theorem which states that any lens space has a unique Heegaard splitting of genus one. There are examples of small Seifert manifolds of genus two.
\item Can we use it to distinguish three manifolds? e.g. For lens spaces $L(p,q)$, the skein modules only depend on $p$ (is this correct? Say $L(5,1)$ and $L(5,2))$; is it possible that $H_1$ can be used to distinguish them?
\item The homology of $C_*(s)$ is bounded? finite rank?
\item Is there a way to quotient out the trivial pairs in $H_1$?
\item What are interesting / natural chain maps? So far, we only have such a chain map induced by adding 2-handles. 
\item Hidden symmetry from algebra? e.g. is the chain complex modules over some algebras?
\end{enumerate}

\fi

\section{preliminary}
In this section, we recall the definitions of the Kauffman bracket skein module and the relative Kauffman bracket skein module.

\begin{definition}\label{kbsmdef}

Let $M$ be an oriented $3$-manifold, $R$ a commutative ring with unity, and $A \in R$ a fixed invertible element. Consider  the set of ambient isotopy classes of unoriented framed links (including the empty link $\varnothing$) in $M$, which we denote by $\mathcal{L}^{\mathit{fr}}$,  and the free $R$-module with basis $\mathcal{L}^{\mathit{fr}}$, denoted by $R\mathcal{L}^{\mathit{fr}}$.
Let $S_{2, \infty}^{\mathit{sub}}$ be the submodule of $R\mathcal{L}^{\mathit{fr}}$ generated by the following expressions: 

\begin{enumerate}

    \item the Kauffman bracket skein expression: $L_+ - AL_0 - A^{-1}L_{\infty}$ and
    
    \item the trivial component expression: $L \sqcup {\pmb \bigcirc}  + (A^2 + A^{-2})L$,
    
\end{enumerate}
\noindent
where $\pmb{\bigcirc}$ denotes the trivial framed knot in $M$ and the skein triple $(L_+$, $L_0$, $L_{\infty})$ denotes three framed links in $M$, which are identical except in a small $3$-ball in $M$ where they differ as illustrated in Figure \ref{skeintriple}.

\begin{figure}[ht]
    \centering
\[  \begin{minipage}{1.3 in} \includegraphics[width=\textwidth]{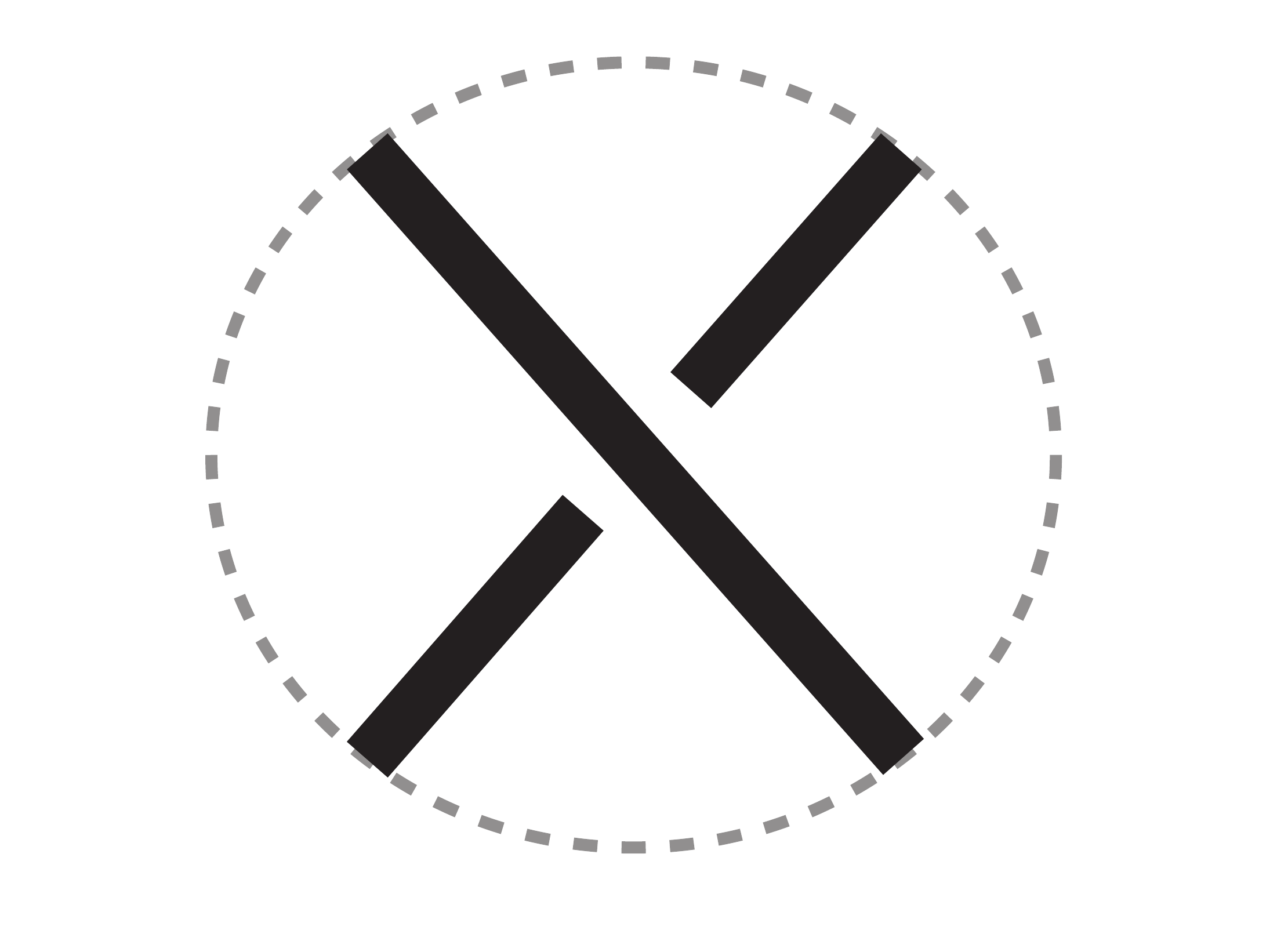} \vspace{-15pt} \[L_+\] \end{minipage} 
               \qquad
        \begin{minipage}{1.3 in}\includegraphics[width=\textwidth]{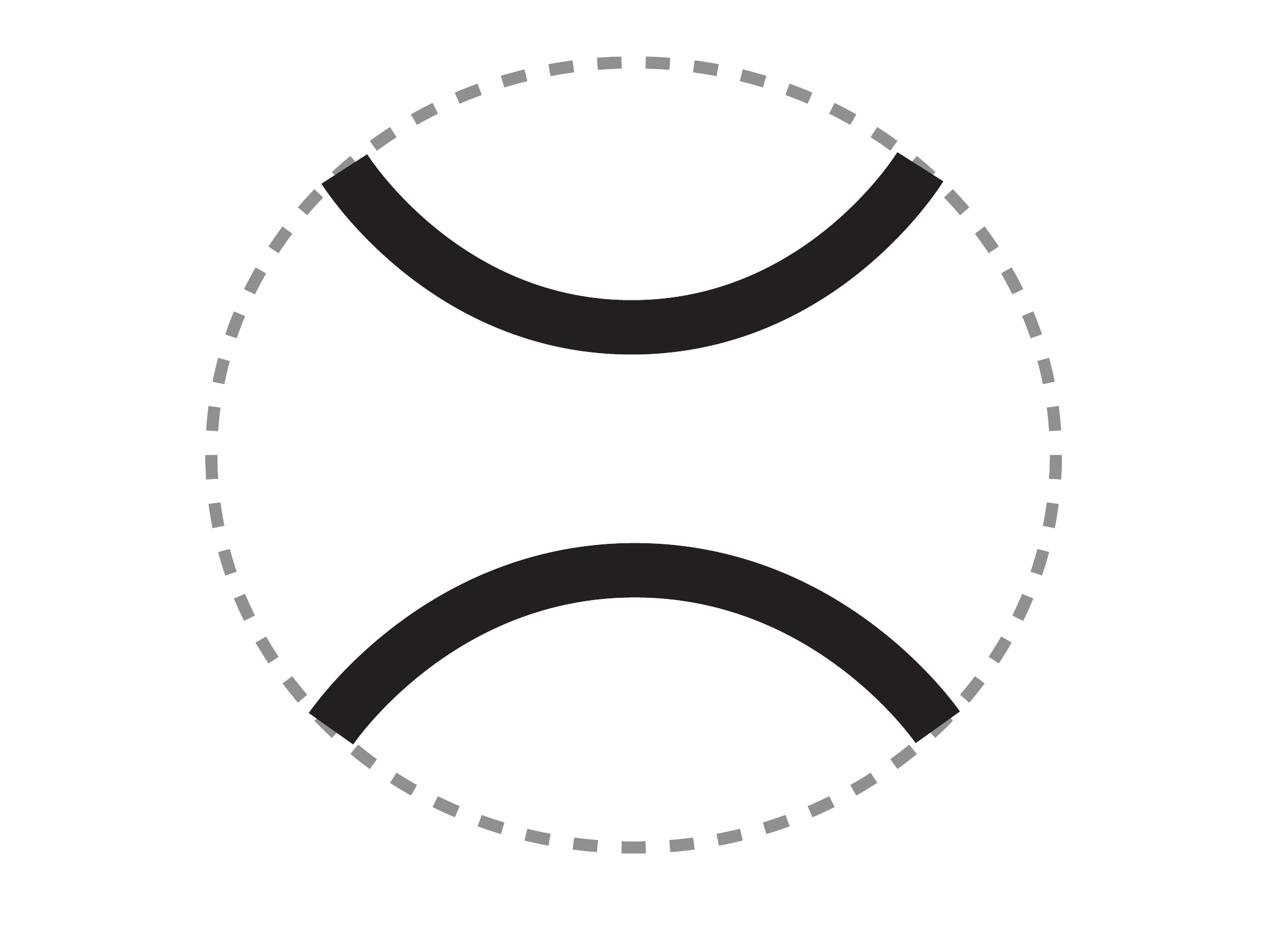} \vspace{-15pt} \[L_0\] \end{minipage}
         \qquad
        \begin{minipage}{1.3 in}\includegraphics[width=\textwidth]{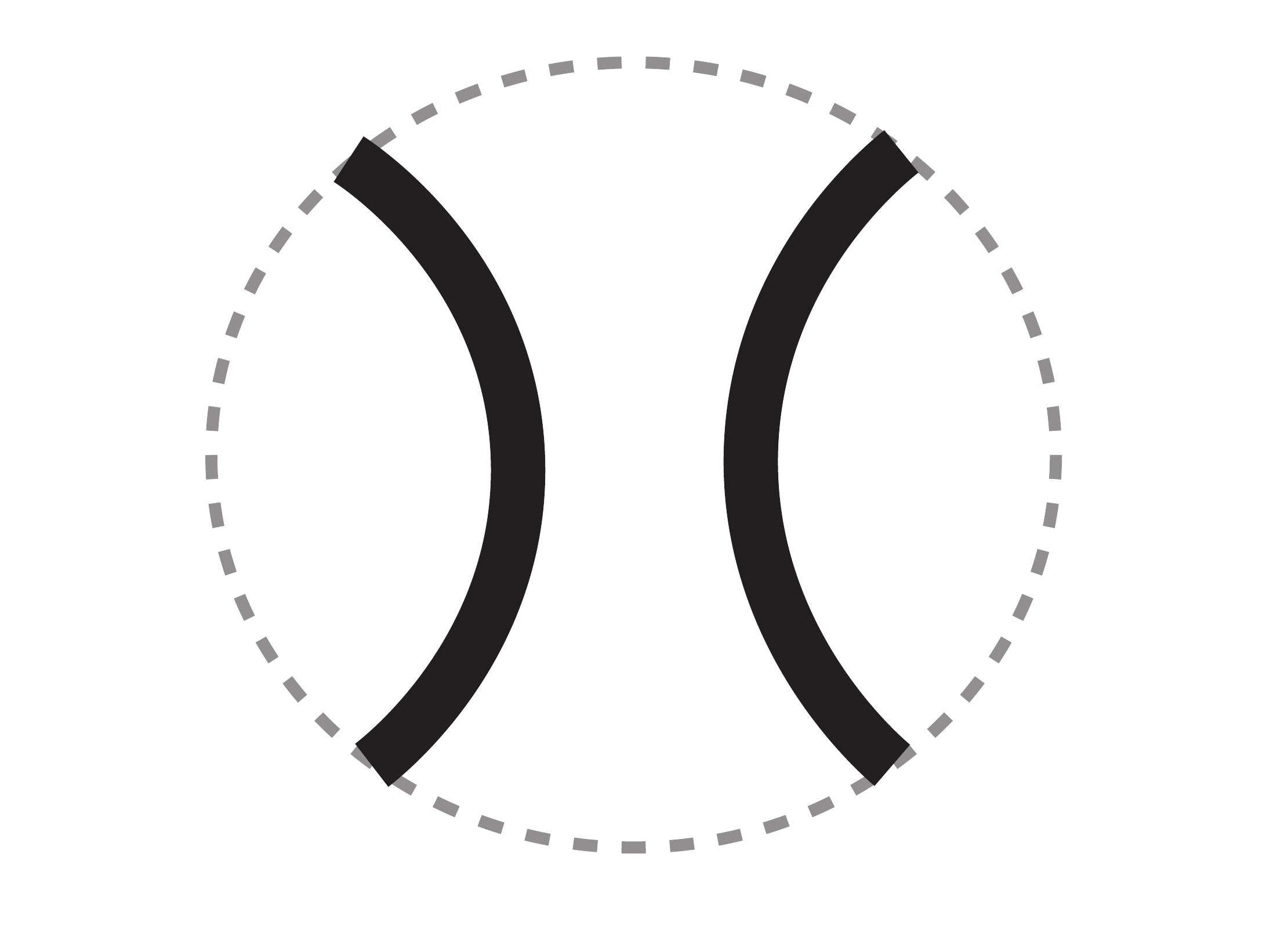} \vspace{-15pt} \[ L_\infty\]\end{minipage} 
        \]
\caption{Skein triple for the Kauffman bracket skein module.}
          \label{skeintriple}
        \end{figure}

The {\bf Kauffman bracket skein module} of $M$ is defined as the quotient: $$\mathcal{S}_{2,\infty}(M;R,A) = \frac{R\mathcal{L}^{\mathit{fr}}}{S_{2, \infty}^{sub}}.$$

\end{definition}

We can also define a relative version of the Kauffman bracket skein module for oriented $3$-manifolds that have framed points on their boundaries (see \cite{smof3, fundamentals}).  

%\begin{example}\cite{sm&j}\label{kbsms3} $\mathcal{S}_{2,\infty}(S^3) = \mathbb{Z}[A^{\pm 1}]\varnothing$. More precisely, $\varnothing$ is the basis element of the module and $L = [L] \varnothing = (-A^2 - A^{-2})\langle \ L \ \rangle \varnothing $, where $[ L ]$ is the unreduced Kauffman bracket polynomial of a framed link $L$. Moreover, $\mathcal{S}_{2,\infty}(B^3) \cong \mathcal{S}_{2,\infty}(\mathbb R^3) = \mathbb{Z}[A^{\pm 1}]\varnothing$.
    
%\end{example}

\begin{definition}

Let $(M,\partial M)$ be an oriented $3$-manifold, $\{x_i\}_{1}^{2n}$ be a set of $2n$ oriented framed\footnote{A framed point in $\partial M$ is an interval in $\partial M$. Thus, a relative framed link intersects
$\partial M$ at framed points.} points on $\partial M$, and $R$ be a commutative ring with unity with a fixed invertible element $A$. Let $\mathcal{L}^{\mathit{fr}}(2n)$ be the set of all relative framed links in $(M, \partial M)$ considered up to ambient isotopy keeping $\partial M$ fixed, such that $L \cap \partial M = \partial L = \{x_i\}_1^{2n}$. Consider the submodule $S_{2,\infty}^{\mathit{sub}}(2n)$ of the free $R$-module $R\mathcal{L}^{\mathit{fr}}(2n)$ generated by the Kauffman bracket skein expressions. Then the {\bf relative Kauffman bracket skein module}, henceforth known as the relative skein module, of $M$ is the quotient: $$\mathcal{S}_{2,\infty}(M, \{x_i\}_1^{2n}; R, A) = \frac{R\mathcal{L}^{\mathit{fr}}(2n)}{ S_{2,\infty}^{\mathit{sub}}(2n)}.$$ 

\end{definition}

We will use the notation $\mathcal{S}_{2,\infty}(M,\{x_i\}_1^{2n})$ when $R = \mathbb{Z}[A^{\pm 1}]$. For thickened surface, the following theorem tells us the basis of  the (relative) Kauffman bracket skein module.

\begin{theorem}\label{ftimesi}\cite{smof3,fundamentals}
 
Let $\Sigma$ be an oriented surface in which each link is equipped with blackboard framing and let $I$ denote the unit interval $[0,1]$. Then $\mathcal{S}_{2,\infty}(\Sigma \times I;R,A)$ is a free $R$-module whose basis consists of the empty link $\varnothing$ and simple closed multicurves in $\Sigma$ that have no trivial components. This applies in particular to handlebodies, since $H_{n} = \Sigma_{0,n+1} \times I$, where $H_n$ is a handlebody of genus $n$ and $\Sigma_{g,b}$ denotes a genus $g$ surface with $b$ boundary components.
 
 \end{theorem}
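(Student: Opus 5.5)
We prove Theorem \ref{ftimesi} (including the relative version with framed boundary points on $\partial\Sigma\times I$) in two steps: the diagrams without crossings and without trivial components span the module, and they are linearly independent. Let $B$ denote the set consisting of $\varnothing$ and the isotopy classes of simple closed multicurves in $\Sigma$ with no trivial components. In the relative case we also allow simple arcs with the prescribed endpoints, again with no trivial closed components.

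\textbf{Spanning.} Every framed link in $\Sigma\times I$ can be isotoped so that its projection to $\Sigma$ is a generic diagram with blackboard framing. Two facts about such diagrams are standard:
\begin{enumerate}
\item Two framed links are isotopic iff their diagrams are related by surface isotopy and the moves $R_1'$ (the framed first move), $R_2$ and $R_3$. Any nontrivial framing can be converted into curls.
\item Applying the skein relation $L_+ = AL_0 + A^{-1}L_\infty$ at a crossing expresses $L$ through diagrams with strictly fewer crossings.
\end{enumerate}
By induction on the number of crossings, every element is therefore an $R$-combination of crossingless diagrams, i.e.\ disjoint unions of simple closed curves (and arcs). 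A trivial component bounds a disk in $\Sigma$. Such a component can be removed using the trivial component relation, at the cost of the factor $-(A^2+A^{-2})$. Hence $B$ spans.

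\textbf{Independence.} This is the main obstacle, and I would handle it with Przytycki's state-sum argument. Let $F$ be the free $R$-module on $B$. For a diagram $D$, define
\[
\phi(D)=\sum_{s} A^{\#a(s)-\#b(s)}\,\bigl(-A^2-A^{-2}\bigr)^{t(s)}\,[D_s],
\]
where:
\begin{enumerate}
\item the sum runs over all Kauffman states $s$, i.e.\ choices of $A$- or $B$-smoothing at each crossing;
\item $\#a(s)$ and $\#b(s)$ are the numbers of $A$- and $B$-smoothings in $s$;
\item $D_s$ is the resulting crossingless diagram;
\item $t(s)$ is the number of trivial components of $D_s$, and $[D_s]\in B$ is $D_s$ with those components deleted.
\end{enumerate}

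It remains to check that $\phi$ is invariant under isotopy of $\Sigma$ and under $R_1'$, $R_2$ and $R_3$. This is exactly the local computation from Kauffman's proof that the bracket is a regular isotopy invariant. Every move is supported in a disk, and inside a disk the bookkeeping of trivial circles is identical to the planar case. One point needs care: a circle is trivial in $\Sigma$ iff it bounds a disk, and this property is unaffected by local moves inside a disk. Thus $\phi$ descends to a map on $R\mathcal{L}^{\mathit{fr}}$.

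By construction, $\phi$ sends the skein expressions and the trivial component expressions to zero, so it factors through $\mathcal{S}_{2,\infty}(\Sigma\times I;R,A)\to F$. On an element of $B$ it is the identity, because such a diagram has no crossings and no trivial components. So $\phi$ is a left inverse to the spanning map $F\to\mathcal{S}_{2,\infty}$, which forces independence.

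Finally, we need that distinct elements of $B$ are non-isotopic as framed links in $\Sigma\times I$. This is automatic, since $F$ is defined on isotopy classes of curves in $\Sigma$ and $\phi$ is well defined on links. The relative case is identical, with diagrams fixed near $\partial\Sigma$. For handlebodies we use $H_n\cong\Sigma_{0,n+1}\times I$.
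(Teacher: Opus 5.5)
Your proof is correct and is essentially the standard Kauffman-bracket state-sum argument of Przytycki that the paper cites for Theorem \ref{ftimesi}; the paper itself gives no proof, only the citation. Your spanning step resolves crossings and discards trivial circles; in a crossingless diagram these can be pushed to a separate level, so the trivial-component relation applies. Your independence step uses the retraction $\phi$ onto the free module on $B$, checked against the framed Reidemeister moves locally in disks, and this is exactly that cited argument.
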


 The following example discusses the skein module of the thickened annulus.

 \begin{example}\label{solidtoruskbsm}

$\mathcal S_{2,\infty} (\Sigma_{0,2} \times I; R, A)$ is free and infinitely generated by the curves $\{x^i\}_{i \geq 0}$, where $x$ denotes the homotopically nontrivial simple closed curve on the annulus and $x^0$ denotes the empty link $\varnothing$. Note that, $\mathcal S_{2,\infty} (S^1 \times D^2; R, A) \cong \mathcal S_{2,\infty} (\Sigma_{0,2} \times I; R, A)$. 
     
 \end{example}

 A result similar to Theorem \ref{ftimesi} also holds for relative skein modules.

 \begin{theorem}\label{rkbsmsib}\cite{fundamentals}

Let $\Sigma$ be an oriented surface, where $\partial \Sigma \neq \emptyset$, and let $\{x_i\}_1^{2n}$ be $2n$ oriented framed points centred at $\partial \Sigma \times \{\frac{1}{2}\}$. %Recall that elements of the RKBSM are relative framed links whose ends points are $x_i$. 
Then $\mathcal S_{2, \infty}(\Sigma \times I, \{x_i\}_1^{2n}; R, A)$ is a free $R$-module whose basis is composed of relative links\footnote{Relative links in $\Sigma$ are families of properly embedded arcs and closed curves with blackboard framing in $\Sigma \times \{\frac{1}{2}\}$.} in $\Sigma \times \{\frac{1}{2}\}$ without trivial components.

\end{theorem}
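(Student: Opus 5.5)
The statement is Theorem \ref{rkbsmsib}: the relative skein module of $\Sigma\times I$ with $2n$ boundary points is free, with basis the relative links in $\Sigma\times\{\frac12\}$ that have no trivial components. The plan is to follow the standard diamond lemma / reduction argument used for Theorem \ref{ftimesi}, adapted to arcs.

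\emph{Step 1: a spanning set.} Project a relative framed link in $\Sigma\times I$ generically to $\Sigma$. This gives a diagram with crossings whose arc endpoints lie at the fixed points of $\partial\Sigma$; framing corrections are handled by the usual blackboard convention. Resolve every crossing with the Kauffman bracket skein relation. This writes the link as a $\mathbb{Z}[A^{\pm1}]$-combination of crossingless diagrams, i.e.\ properly embedded arcs plus simple closed curves. Each trivial circle can then be removed at the cost of a factor $-A^2-A^{-2}$. So the relative links without trivial components span the module.

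\emph{Step 2: a state-sum map.} For linear independence, define a map $\Phi$ from $R\mathcal{L}^{fr}(2n)$ to the free module $F$ on crossingless relative diagrams without trivial components.
\begin{enumerate}
\item On a diagram $D$, set $\Phi(D)=\sum_s A^{a(s)-b(s)}(-A^2-A^{-2})^{t(s)}D_s$. Here $s$ runs over Kauffman states, $a(s)$ and $b(s)$ count the two kinds of smoothings, $t(s)$ is the number of trivial circles in the smoothed diagram, and $D_s$ is that diagram with the trivial circles deleted.
\item Show that $\Phi$ is invariant under the framed Reidemeister moves R2 and R3 and the framed version of R1. This is the usual Kauffman bracket computation. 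It is local, so it works unchanged away from $\partial\Sigma$.
\item Show that $\Phi$ is invariant under isotopy of crossingless diagrams in $\Sigma$ rel boundary. Such an isotopy preserves the property of a circle being trivial.
\end{enumerate}
By construction $\Phi$ kills the skein relations. It therefore descends to $\mathcal S_{2,\infty}(\Sigma\times I,\{x_i\})\to F$, and it is the identity on the proposed basis. This gives independence.

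\emph{Step 3: the main obstacle.} The hard part is justifying that ambient isotopy in $\Sigma\times I$ rel $\partial$ corresponds exactly to Reidemeister moves plus planar isotopy rel the boundary points. There are also two further checks. First, the boundary points lie on $\partial\Sigma\times\{\frac12\}$, so arcs can slide near the boundary, and we must see that no extra moves arise there. I would check this by pushing the endpoints into a collar, where the projection is injective. Second, distinct crossingless diagrams without trivial components must represent distinct isotopy classes rel boundary. This is what "basis" means, and it follows from isotopy classification of multicurves and arcs on surfaces. With these in place, the spanning set from Step 1 is linearly independent, so the module is free with the claimed basis.
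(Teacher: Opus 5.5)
Your proposal is correct and is essentially the standard argument. The paper gives no proof of its own and only cites \cite{fundamentals}, where freeness is proved in the same way: resolving crossings gives spanning, and the Kauffman state sum, invariant under the framed Reidemeister moves, gives independence. One small remark: the last check in your Step 3 (that distinct crossingless diagrams give distinct classes) is not a separate input, since it already follows from $\Phi$ being well defined and equal to the identity on the proposed basis.
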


The skein module of a surface times an interval may be equipped with an algebra structure for which the multiplication operation is defined as follows.

\begin{definition}
 Consider two framed links $L_1$ and $L_2$ in $\Sigma \times I$. Define their product $\cdot$ by placing $L_1$ over $L_2$ in $\Sigma \times I$, that is, $L_1 \cdot L_2 = L_1 \sqcup L_2$ such that $L_{1} \subset \Sigma\times (\frac{1}{2}, 1)$ and $L_2 \subset \Sigma\times(0,\frac{1}{2})$. The empty link $\varnothing$ serves as the multiplicative identity. This multiplication endows the skein module of a thickened surface $\Sigma \times I$ with a natural algebra structure. The Kauffman bracket skein module equipped with this algebra structure is called the {\em Kauffman bracket skein algebra}.   \end{definition}

We denote the Kauffman bracket skein algebra, henceforth known simply as the skein algebra, by $\mathcal{S}^{\mathit{alg}}(\Sigma; R, A)$. This new notation emphasises the fact that the skein algebra depends on the surface and its product structure. For brevity, we use the notation $\mathcal{S}^{\mathit{alg}}(\Sigma)$ when $R = \mathbb{Z}[A^{\pm 1}]$.

We now state some properties of the skein module required for proving our main results. The following theorem determines how the Kauffman bracket skein module behaves under handle addition, thereby giving its presentation in terms of generators and relations. 

%\begin{theorem}\cite{fundamentals}\label{propkbsm1}

%\begin{enumerate}

    %\item Let $i : M \hookrightarrow N$ be an orientation preserving embedding of $3$-manifolds. This yields a homomorphism $i_* : \mathcal{S}_{2,\infty}(M;R,A) \longrightarrow
%\mathcal{S}_{2,\infty}(N;R,A)$ of skein modules. This correspondence leads to a functor from the
%category of $3$-manifolds and orientation preserving embeddings (up to ambient isotopy) to the category of $R$-modules with a specified invertible
%element $A \in R$.\label{embhom} \\

%\item \label{disjointunion}Let $M = M_1 \sqcup M_2$ be the disjoint union of oriented $3$-manifolds $M_1$ and $M_2$. Then 

%$$\mathcal S_{2,\infty}(M;R, A) \cong \mathcal S_{2,\infty}(M_1;R, A) \otimes_R \mathcal S_{2,\infty}(M_2;R, A).$$ \ \ 

%\item {\it (The Universal Coefficient Property)} \label{universalcoefkbsm}\noindent Let $R$ and $R'$ be commutative rings with unity and $r : R \longrightarrow R'$ be a homomorphism. %We can think of R′ as an R module. 
%Then the identity map on $\mathcal{L}^{\mathit{fr}}$
%induces the following isomorphism of $R'$ (and R) modules:
%$$\overline{r} : \mathcal{S}_{2,\infty}(M;R,A) \otimes_R  R' \longrightarrow \mathcal{S}_{2,\infty}(M;R', r(A)).$$

%\end{enumerate}

%\end{theorem}

%The following theorem determines how the Kauffman bracket skein module behaves under handle addition, thereby giving its presentation in terms of generators and relations. 
    \begin{theorem}\cite{fundamentals,kbsmlens}\label{propkbsm2}

    \begin{enumerate}

    \item  If $N$ is obtained from $M$ by adding a $3$-handle to $M$ and $i : M \hookrightarrow N$ is the associated embedding, then the induced homomorphism 
$i_* : \mathcal{S}_{2,\infty}(M;R,A) \longrightarrow \mathcal{S}_{2,\infty}(N;R,A)$ is an isomorphism.\label{3hand} \\

\item (Handle Sliding Lemma)\label{handleslidinglemma} Let $(M,\partial M)$ be a $3$-manifold with boundary and $\gamma$ be a
simple closed curve on $\partial M$. Additionally, let $N = M_{\gamma}$ be the $3$-manifold
obtained from $M$ by adding a $2$-handle along $\gamma$ and $i : M \hookrightarrow N$ be the associated embedding. Then the induced homomorphism
$i_* : \mathcal{S}_{2,\infty}(M;R,A) \longrightarrow \mathcal{S}_{2,\infty}(N;R,A)$ is an epimorphism.\label{2hand} Furthermore, the kernel of $i_*$ is generated by the relations yielded by $2$-handle slidings. In particular, if $\mathcal{L}^{\mathit{fr}}_{\mathit{gen}}$
is a set of framed links in $M$ that generates $\mathcal{S}_{2,\infty}(M;R,A)$,
then $\mathcal{S}_{2,\infty}(N;R,A) \cong \mathcal{S}_{2,\infty}(M;R,A)/\mathcal J$, where $\mathcal J$ is the submodule of
$\mathcal{S}_{2,\infty}(M;R,A)$ generated by the expressions $L - sl_{\gamma}(L)$.  Here $L \in \mathcal{L}^{\mathit{fr}}_{\mathit{gen}}$ and $sl_{\gamma}(L)$ is obtained from $L$ by sliding it along $\gamma$. %(that is, we perform a $2$-handle sliding).
\end{enumerate}

\end{theorem}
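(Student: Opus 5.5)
The plan is to treat both parts by the same general position principle. A link in $N$ can be pushed into $M$ because it misses a low-dimensional "core" of the added handle. Isotopies and skein moves in $N$ can likewise be made to avoid that core, or to meet it only in a controlled way. In each case I would build an inverse map $\mathcal{S}_{2,\infty}(N;R,A)\to \mathcal{S}_{2,\infty}(M;R,A)$, or into a quotient of it, by pushing off.

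For part (\ref{3hand}), write $N=M\cup B^3$ with $B^3$ glued along a $2$-sphere boundary component, and let $p$ be the center of $B^3$.

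1. Surjectivity: a framed link is $1$-dimensional, so generically it misses $p$. Since $B^3\setminus\{p\}$ deformation retracts onto its boundary sphere, the link can be isotoped into (a collar inside) $M$.
2. Injectivity: an isotopy of links in $N$ sweeps out a $2$-dimensional subset of $N$, so after a small perturbation it misses $p$ and pushes to an isotopy in $M$. Each skein relation of Definition~\ref{kbsmdef} is supported in a small $3$-ball, which can also be chosen to miss $p$. For the trivial component relation, a trivial knot in $N$ bounds a disk, which generically misses $p$ and so remains trivial in $M$.
3. Hence push-off is a well-defined inverse to $i_*$.

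For part (\ref{handleslidinglemma}), let the $2$-handle be $D^2\times I$ attached along $\partial D^2\times I$, a neighborhood of $\gamma$, and let $c=\{0\}\times I$ be its cocore arc.

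1. Surjectivity: exactly as above, links generically miss $c$, and the handle minus $c$ retracts to the attaching annulus, so every link pushes into $M$.
2. $\mathcal J\subset\ker i_*$: sliding $L$ over $\gamma$ gives a link isotopic to $L$ in $N$, since one sweeps a strand across the core disk $D^2\times\{1/2\}$.
3. $\ker i_*\subset \mathcal J$: take a generic isotopy in $N$ between two links lying in $M$. By transversality, the moving link meets the $1$-dimensional arc $c$ only at finitely many times, each time transversally at one point of one strand. Between these times the isotopy pushes into $M$. At each crossing time, the pushed-off links before and after differ by a single band slide of one strand over $\gamma$. Skein relations again live in balls disjoint from $c$. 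So pushing off gives a well-defined map $\mathcal{S}_{2,\infty}(N;R,A)\to \mathcal{S}_{2,\infty}(M;R,A)/\mathcal J'$, inverse to $i_*$. Here $\mathcal J'$ is spanned by $L-sl_\gamma(L)$ for all links $L$ and all choices of slide.
4. Reduce $\mathcal J'$ to $\mathcal J$, the relations for a generating set $\mathcal{L}^{\mathit{fr}}_{\mathit{gen}}$ (the final step below).

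I expect the last step to be the main obstacle. A slide of an arbitrary $L$ is specified by a band from a point of $L$ to $\gamma$, and there is no obvious reason it should be determined by slides of generators. My first approach: isotope the slid strand and band into a small neighborhood $U$ of an arc meeting $\partial M$ near $\gamma$. Then expand $L$ by skein relations applied away from $U$, writing $L=\sum r_k L_k$ as a combination of generators, each containing the same local strand in $U$. The slide acts only inside $U$, so $sl_\gamma$ commutes with this expansion, giving $L-sl_\gamma(L)=\sum r_k\,(L_k-sl_\gamma(L_k))$.

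This needs two facts. First, the generating set can be chosen so that such an expansion exists with the fixed strand in $U$; one can assume this by using relative skeins with the strand cut out. Second, the slides of $L_k$ at different basepoints and along different bands agree modulo slides already counted. I would also check that framings are carried consistently through the band move, using blackboard framing on $\partial M$.
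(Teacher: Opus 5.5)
The paper gives no proof of this statement: it quotes it from \cite{fundamentals,kbsmlens}. The only related argument in the paper is the description $\mathcal J=\omega(\mathcal S_{2,\infty}(M;a,b))$ that follows it.

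Your general-position arguments are the standard ones and are fine. This covers part (1), surjectivity in part (2), $\mathcal J'\subseteq\ker i_*$, and $\ker i_*\subseteq\mathcal J'$. Two routine points need care. First, push off using that $N$ minus a thin neighbourhood of the cocore is diffeomorphic to $M$, since the radial retraction is not injective. Second, shrink a trivial component off $c$ before applying its relation. Every slide has the form $L'\cup\alpha\leftrightarrow L'\cup\beta$ for a relative link $L'$ with ends on $\gamma$. So steps 1--3 already give $\ker i_*=\omega(\mathcal S_{2,\infty}(M;a,b))$, which is exactly the version the paper uses afterwards.

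The gap is step 4, the clause for an \emph{arbitrary} generating set $\mathcal L^{\mathit{fr}}_{\mathit{gen}}$ of the absolute module. Your identity $L-sl_\gamma(L)=\sum_k r_k\,(L_k-sl_\gamma(L_k))$ requires the relation $L=\sum_k r_kL_k$ in $\mathcal S_{2,\infty}(M)$ to be realised by moves that never touch the slid strand, so that every $L_k$ still contains it. For a general generating set this is false.

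Take $\varnothing\in\mathcal L^{\mathit{fr}}_{\mathit{gen}}$, as in the basis of Theorem~\ref{ftimesi}, and $L=\bigcirc$ with a short band to $\gamma$. Then $L=-(A^2+A^{-2})\varnothing$, and $\varnothing$ has no slides. Yet $\bigcirc-sl_\gamma(\bigcirc)=-(A^2+A^{-2})\varnothing-\gamma'$, where $\gamma'$ is a pushed-in copy of $\gamma$. This is nonzero whenever $\gamma$ is homotopically nontrivial in a handlebody $M$: specialise to $A=-1$ and compare trace functions. The same problem arises whenever the trivial-component relation deletes the component carrying the band. It also arises whenever isotopies in the relation move the strand, so that the $L_k$ inherit different bands.

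Your remedy, choosing generators so the strand stays in $U$ by cutting it out, replaces the hypothesis with a generating set of $\mathcal S_{2,\infty}(M;a,b)$. That proves the Bullock--Lo Faro statement, not the one claimed. Your ``second fact'' is not a lemma but a restatement of what must be shown: that differences $sl_{b_1}(L)-sl_{b_2}(L)$, for links $L$ outside the generating set, lie in the span of slides of generators.

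Closing the gap would need at least three things. First, let $sl_\gamma$ range over all bands on each generator. Second, carry the band through the relation connecting $L$ to the generators, shrinking skein balls off the band and dragging it along isotopies. Third, give a separate argument for relations such as $G\sqcup\bigcirc-G\sqcup\gamma'$, which arise when the band sits on a component that gets deleted.
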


The following is about how to get the submodule $\mathcal J$ consisting of handle sliding relations via its relative skein module. For instance, consider the two marked points $a$ and $b$, such that they lie on the simple closed curve $\mu$ in $\partial H_2$ and they divide the curve $\mu$ into two arcs $\alpha$ and $\beta$ (see Figure \ref{gamma12}). \\

\begin{figure}[ht]
    \centering
    \begin{overpic}[unit=1mm, scale = 0.17]{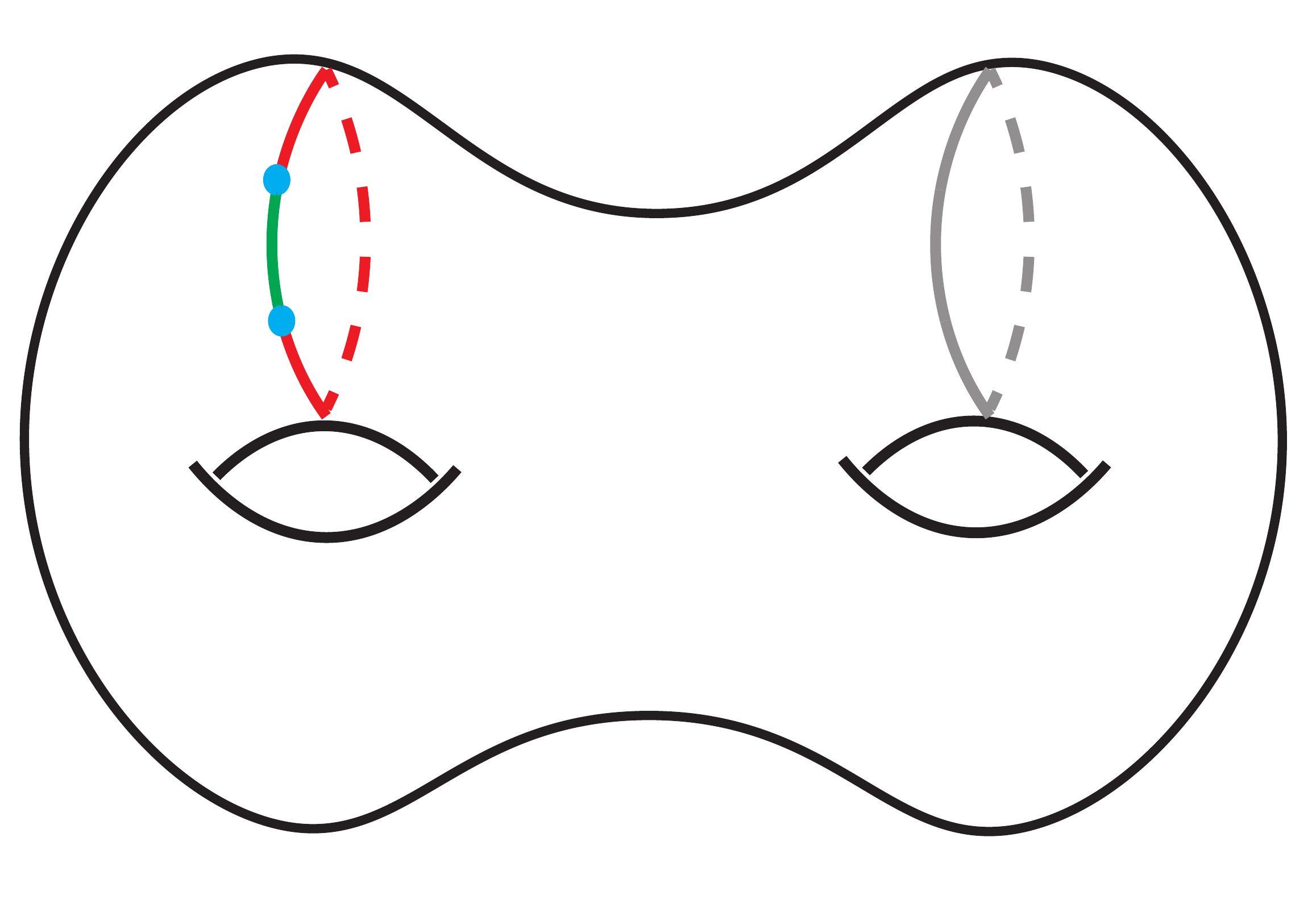}
\put(11.5,29){\tiny{$b$}}
\put(11.5,36){\tiny{$a$}}
%\put(14,6){\tiny $ \beta$}
\put(14.25,32.5){\tiny $\alpha$}
\put(20,32.5){\tiny$\beta$}
\put(53,32.5){\tiny$\eta$}
\end{overpic}
\vspace{-5mm}
    \caption{Marked points $a$ and $b$ on the simple closed curve $\mu$ in $\partial H_2$ that divide it into curves $\alpha$ and $\beta$.}
    \label{gamma12}
\end{figure}

Consider any relative curve $L$ in $(H_2; a, b)$. Now, handle slidings in $(H_2)_{\mu}$ take place locally in the neighbourhood of the curve $\beta$. Consider fixed tangents at the points $a$ and $b$ and let the relative curve $L$ approach these points along the tangents. For every relative curve $L$, handle sliding in $(H_2)_{\mu}$ replaces the curve $L\cup \alpha$ with the curve $L \cup \beta$. This gives the handle sliding relation, $L \cup \alpha \equiv L \cup \beta$. By introducing the $\mathbb Z[A^{\pm 1}]$-linear homomorphism $\omega: \mathcal S_{2,\infty}(H_2; a,b) \longrightarrow \mathcal S_{2,\infty}(H_2)$, defined by $\omega (L) = L \cup \alpha - L \cup \beta$, we see that $\omega(\mathcal S_{2,\infty}(H_2; a,b)) = \mathcal J$. Hence, the image of any basis of $\mathcal S_{2,\infty}(H_2; a,b)$ generates $\mathcal J$. See Figure \ref{omegaalpha} for a visual explanation. We note that this method of describing $2$-handle sliding relations was pioneered by Bullock and Lo Faro in \cite{knotext}. See also \cite{blp}.

\begin{figure}[ht]
$\vcenter{\hbox{\begin{overpic}[scale=.13]{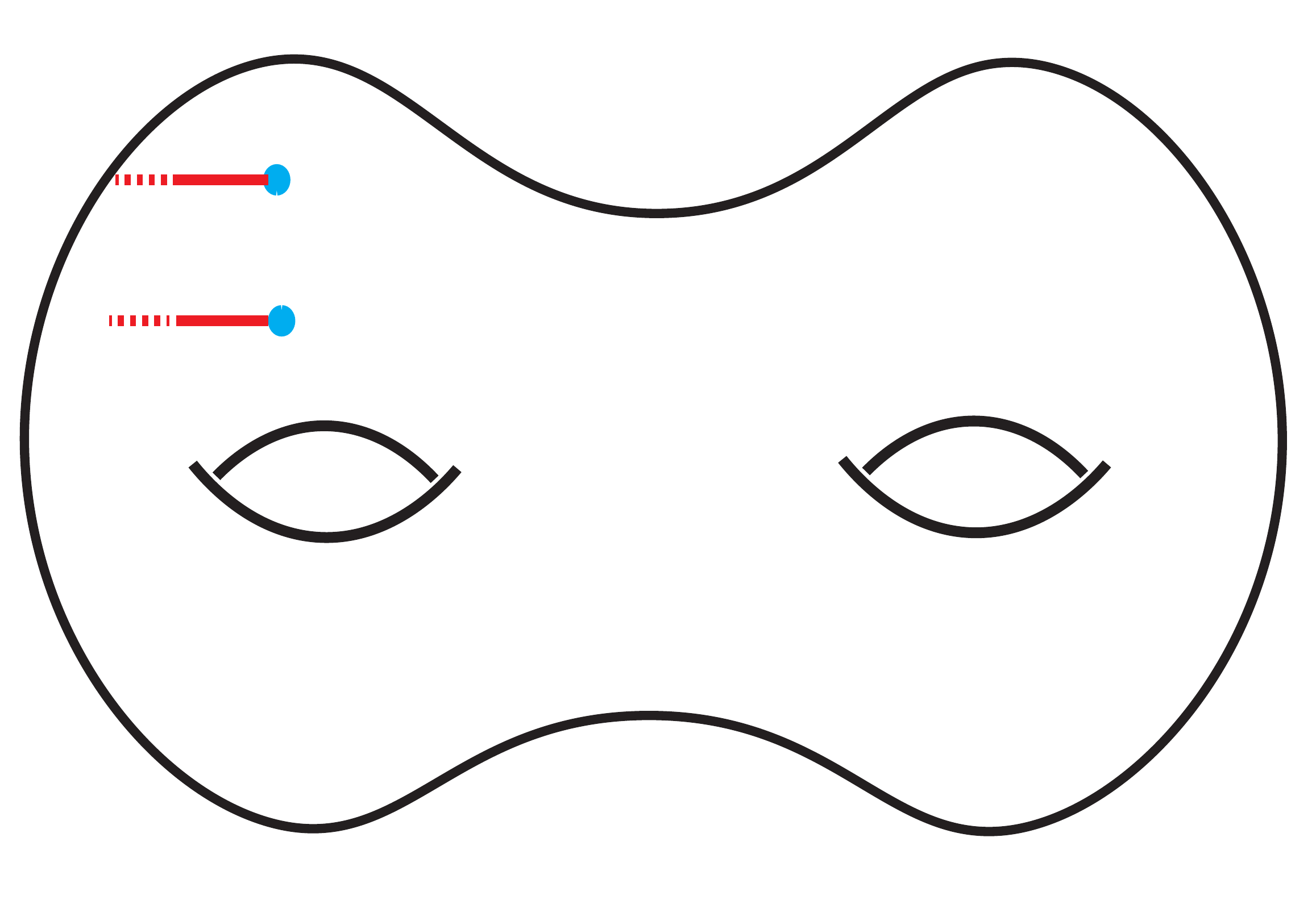}
\put(66, -5){$\alpha$}
%\put(36, 29){$F_n$}
\end{overpic}}}   \xrightarrow{\omega}
\vcenter{\hbox{\begin{overpic}[scale=.13]{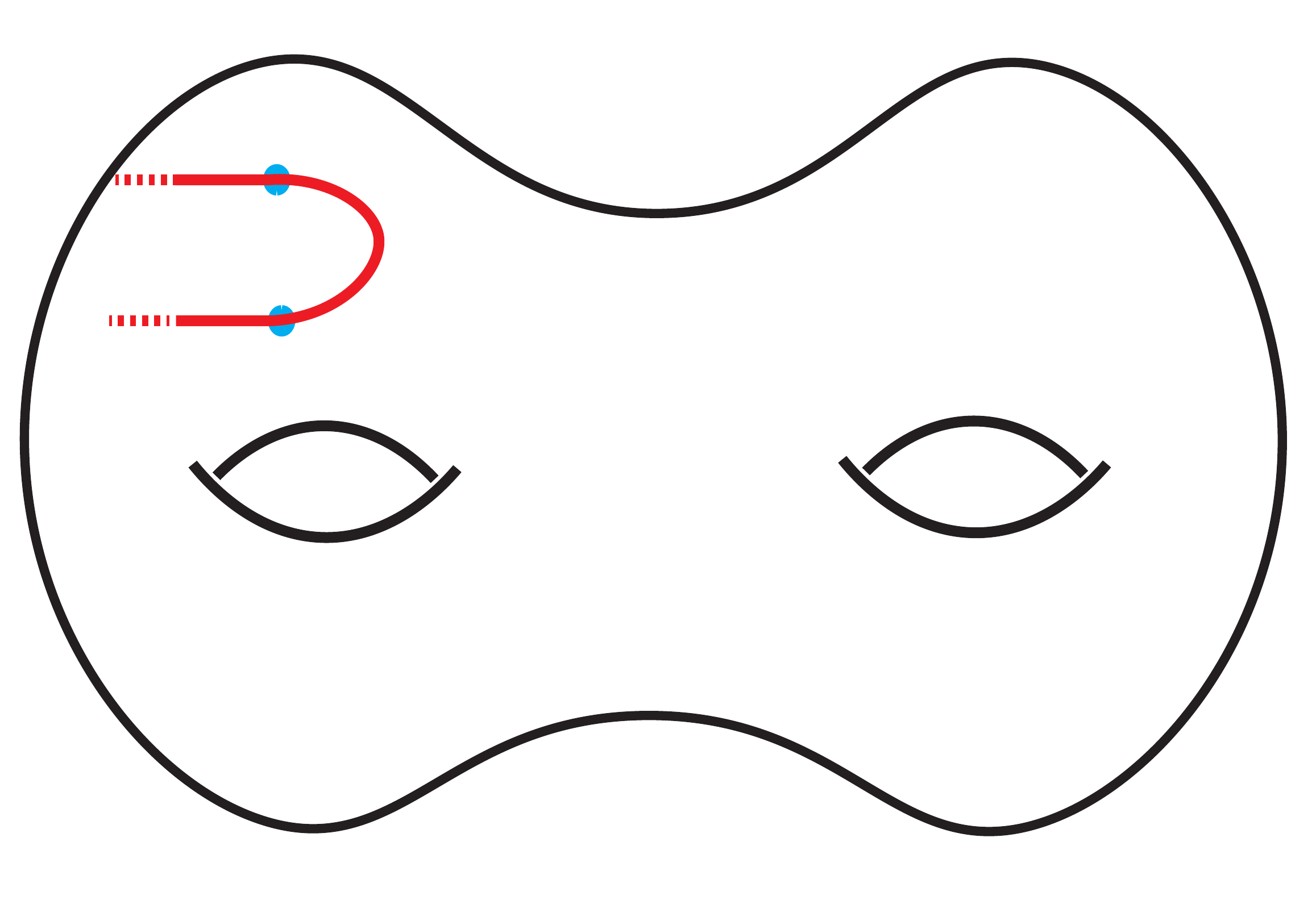}
\put(56, -5){$\alpha \cup \beta_2$}
%\put(36, 29){$F_n$}
\end{overpic}}} 
 -
\vcenter{\hbox{\begin{overpic}[scale=.13]{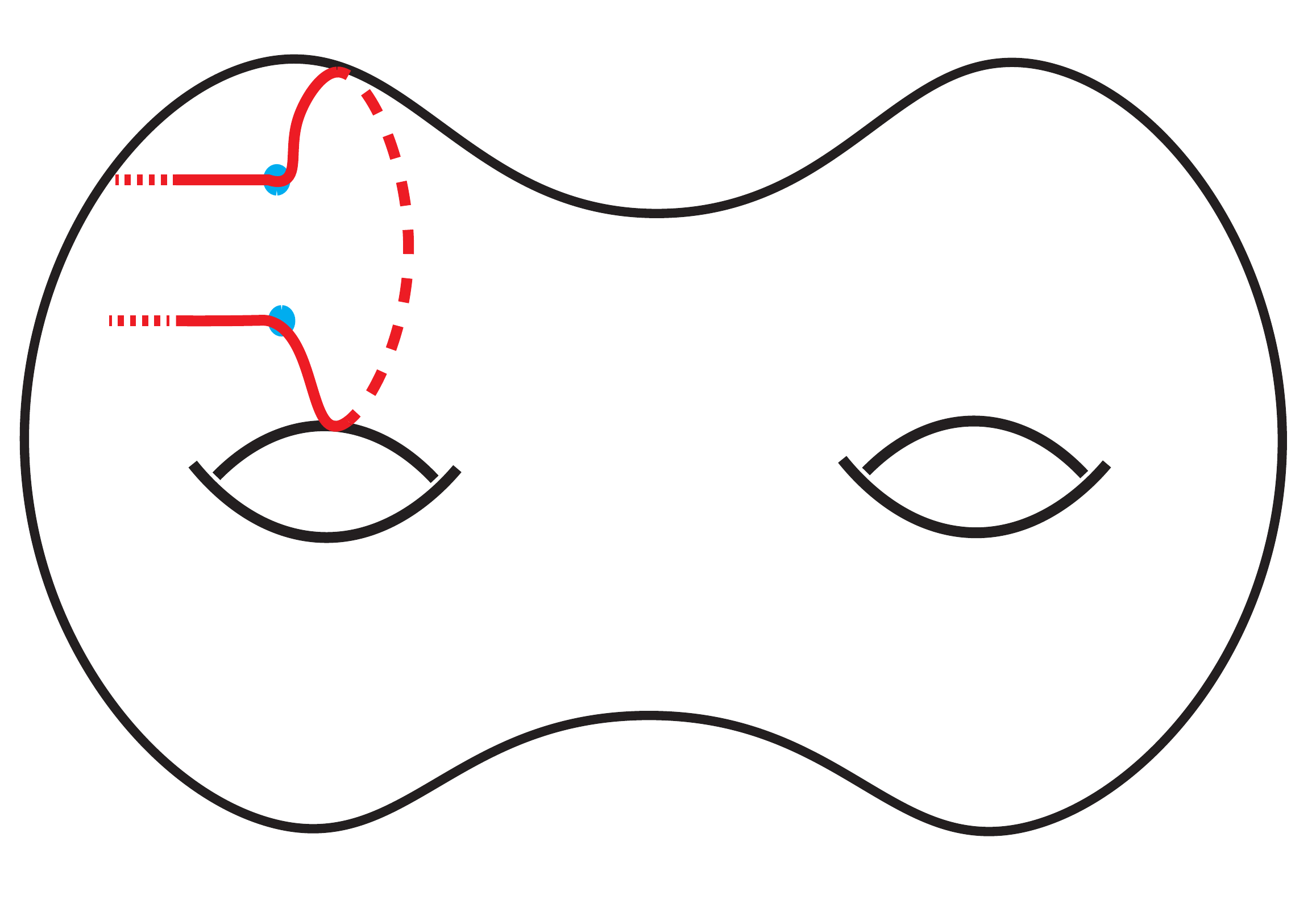}
\put(56, -5){$\alpha \cup \beta_1$}
%\put(5.5, 33){\footnotesize{$n-1$}}
%\put(92, 33){\footnotesize{$n-1$}}
%\put(48.5, 25){\footnotesize{$n-2$}}
%\put(36, 29){$F_n$}
\end{overpic}}}$
\vspace*{3mm}
\caption{Illustration of $\omega(\alpha)$.}
\label{omegaalpha}
\end{figure}

\section{Heegaard skein complex}
Let us denote by $H_{g_0}$ the standard gunes $g_0$ handlebody, and let $\vec\mu=\{\mu_{1},\mu_{2},\dots,\mu_{g}\}$ be an ordered set of $g<g_0$ simple closed curves $\mu_i:[-1,1]\rightarrow \partial(H_{g_0})$ so that they are mutually disjoint, non-separating and the boundary surface remains connected after removing them. These information defines a three dimensional manifold obtained from a genus $g_0$ handlebody by attaching $g$ $2$-handles along the $g$ attaching curves $\mu_i$ for $i=1,\dots g$.  Let $B_i$ be a tubular neighborhood of $\mu_i$ with a local coordinate $B_i:[-1,1]\times (-\infty,+\infty)\rightarrow \partial(H_{g_0})$ with $\mu_i=B_i([-1,1]\times \{1\})$. Next, for $n\geq 1$, we create countably many parallel copies of $\mu_i$ denoted by $\mu_{i,n}$ such that points on $\mu_{i,n}$ have second coordinate $n$ (i.e. $\mu_i$ is $\mu_{i,1}$). On each $\mu_{i,n}$, we put a pair of framed points $a_{i,n}$ and $b_{i,n}$ at position $(-\epsilon,n)$ and $(\epsilon,n)$ respectively, and the framing is chosen such that it is pointing to the right when walking along the curves. The two framed points separates the curve into two parts, we write $\alpha_{i,n}$ for the part starting from $a_{i,n}$ to $b_{i,n}$ following the orientation and write $\beta_{i,n}$ for the other part.
Let us denote the ordered set of curves $\mu$ together with the additional information in the previous paragraph by $U=(\mu,B_i,\mu_{i,n},a_{i,n},b_{i,n})$.  write $u_i$ for $\mu_i$ with chosen framed points $a_{i}$ and $b_i$.

With these setting, we define the following key definition of the chain groups:

\begin{definition}

Let $X_{n}(H_{g_0},U)$ be the set of relative framed links with $2n$ endpoints that are attached to $n$ pairs of framed points on certain $B_i$s and on each $B_i$, we require that the pairs of framed points with smaller second coordinate are occupied first. Define the $n$th chain group $C_{n}(H_{g_0},U)=\mathbb{Z}[A,A^{-1}]X_{n}(H_{g_0},U)/KBSR$, i.e. the quotient module of the free $\mathbb{Z}[A,A^{-1}]$ module with basis elements in $X_{n}(H_{g_0},U)$ by the submodule generated by the Kauffman bracket skein relations as shown in Figure \ref{skeintriple}.
    
\end{definition}
To define the boundary map, we need some further notations. For each $L\in X_{n}(H_{g_0},U)$, write $n_i(L)$ to be the number of pairs of framed points occupied by $L$ in $B_i$(usually just write $n_i$ when it is clear from context). For later convenience, we also denote the endpoints of $L$ by $\bar{a}_{i,n}$ and $\bar{b}_{i,n}$ so that $\bar{a}_{i,n}$ and $\bar{b}_{i,n}$ are attached to $a_{i,n}$ and $b_{i,n}$ respectively.

\begin{definition}

We define a map $$W_{i,j}:C_{n}(H_{g_0},\mu)\rightarrow C_{n-1}(H_{g_0},\mu)$$ such that, for each $L\in X_{n}(H_{g_0},U)$, $$W_{i,j}(L)=L\cup \alpha_{i,j}-L\cup \beta_{i,j}\footnote{Sometimes, when strengthening the place where the union take place, we write $L\bigcup\limits_{(\bar{a}_{i,n},\bar{b}_{i,n})} \alpha_{i,j}$.}.$$
Since the Kauffman bracket skein relations are local, $W_{i,j}$ is well-defined for each $L$ and we extend linearly.    
\end{definition}

\begin{remark}

In this definition, we consider $\alpha_{i,j}$ and $\beta_{i,j}$  to be part framed line segments(after pushing inside the handlebody) equipped with the blackboard framing on $\partial(H_{g_0})$ coincide with the framing of the framed points $a_{i,j}$ and $b_{i,j}$ on them. After taking the union, we push $L\cup \alpha_{i,j}$ and $L\cup \beta_{i,j}$ slightly into the handlebody $H_{g_0}$. Notice that $W_{i,j}(L)$ may not belong to $C_{n-1}(H_{g_0},U)$ since some pair of framed points on $B_i$ with smaller second coordinate are no longer occupied. We will fix this issue in the following definition.
    
\end{remark}

Now let us define a shifting operator $S$ in the following way: for each framed link $\tilde{L}$ and suppose that there are certain gaps in the sequence of the occupied pairs of framed points. Say $(a_{i,k_i},b_{i,k_i})$ is a pair of framed points that are gaped in the sequence of the occupied pairs of framed points on $B_i$, then we shift $\tilde{L}$ by moving the pair of endpoints $(\bar{a}_{i+1,k_{i}+1},\bar{b}_{i+1,k_{i}+1})$ on $\partial(H_{g_0})$ along with $\tilde{L}$ to $(a_{i,k_i},b_{i,k_i})$ in the obvious way inside a small disk enclosing four framed points $a_{i,k_i},b_{i,k_i},a_{i+1,k_{i}+1},b_{i+1,k_{i}+1}$. The effect of $S$ is to shift $\tilde{L}$ at each gap so that all of them are removed. Therefore, $S(\tilde{L})$ is an element in certain $C_{n}(H_{g_0},U)$.

\begin{definition}

Let $\partial_{i,j}:C_{n}(H_{g_0},\mu)\rightarrow C_{n-1}(H_{g_0},U)$ be the homorphism that sends each framed link $L$ to $S\circ W_{i,j}(L)$ and extend linearly. We define the boundary map $$\partial_n:C_{n}(H_{g_0},U)\rightarrow C_{n-1}(H_{g_0},U)$$ by $$\partial_n(L)=\sum\limits_{i=1}^{g}(-1)^{\sum\limits_{s=1}^{i-1}n_s}\sum\limits_{j=1}^{n_i}(-1)^j\partial_{i,j}(L).$$
The chain complex is called the Heegaard skein complex $(C_{n}(H_{g_0},U),\partial_n)$ and its homology is called the Heegaard skein homology.
    
\end{definition}

It remains to check that $\partial_{n-1}\circ \partial_{n}=0$.

\begin{proposition}

For any given data $(H_{g_0},\mu)$, $(C_{n}(H_{g_0},U),\partial_n)$ forms a chain complex. 
    
\end{proposition}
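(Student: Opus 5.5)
We need $\partial_{n-1}\circ\partial_n(L)=0$ for every basis element $L\in X_n(H_{g_0},U)$; by linearity this suffices. The plan is the standard simplicial argument: expand $\partial_{n-1}\partial_n(L)$ as a double sum of composites $\partial_{i',j'}\circ\partial_{i,j}(L)$ with signs. Then pair the terms so that the two members of each pair are equal as elements of $C_{n-2}(H_{g_0},U)$ but carry opposite signs.

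\emph{Step 1: commutation of closings.} The key observation is that closing off two different pairs of endpoints commutes. Take two distinct pairs of endpoints of $L$, say at positions $(i,j)$ and $(k,l)$. Closing the first with $\alpha_{i,j}$ or $\beta_{i,j}$ and then the second with $\alpha_{k,l}$ or $\beta_{k,l}$ gives the same relative link, up to isotopy rel boundary, as doing it in the opposite order. This holds because the arcs lie on disjoint parallel copies $\mu_{i,j}$, $\mu_{k,l}$ in the bands $B_i$, $B_k$, and they are pushed slightly into $H_{g_0}$ in disjoint collars. Expanding the differences gives
\[
W_{k,l}W_{i,j}(L)=L\cup\alpha_{i,j}\cup\alpha_{k,l}-L\cup\alpha_{i,j}\cup\beta_{k,l}-L\cup\beta_{i,j}\cup\alpha_{k,l}+L\cup\beta_{i,j}\cup\beta_{k,l},
\]
which is symmetric in the two pairs. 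Next I must check that the shift operator $S$ is compatible with this. After the first closing, the surviving pairs above the gap are relabelled: on $B_i$ with $l>j$, the pair $(i,l)$ becomes $(i,l-1)$. I will argue that $S$ is an isotopy rel the remaining endpoints, supported in small disks near the gaps. Hence $S\circ W\circ S\circ W$ equals $S\circ W\circ W$, since shifting before or after a closing on a disjoint curve yields isotopic results. So each composite is determined by the unordered set of closed positions, once indices are translated back to the original labels of $L$.

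\emph{Step 2: sign bookkeeping.} Fix two distinct original positions $p<q$ in the total order: lexicographic in $(i,j)$, with $B_1$'s pairs first. The sign attached to closing position $(i,j)$ in $L$ is $(-1)^{N+j}$ with $N=\sum_{s<i}n_s$, which equals $(-1)^{\text{global index of }(i,j)}$. Thus $\partial_n$ is an alternating face-map sum $\sum_{m=1}^n(-1)^m d_m$, where $d_m$ closes the $m$-th pair in the global order. After $d_m$, the pairs with global index greater than $m$ drop by one. This holds both within $B_i$ (shift by $S$) and across bands (since $n_i$ decreases by one, lowering the prefix sums for later bands). Hence $d_{m'}d_m=d_{m}d_{m'+1}$ for $m\le m'$, which is exactly the simplicial identity $d_id_j=d_{j-1}d_i$ for $i<j$. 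The usual cancellation then gives $\sum_{m,m'}(-1)^{m+m'}d_{m'}d_m=0$.

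\emph{Main obstacle.} The delicate point is justifying that the shift operator $S$ really acts as a well-defined relabelling that commutes with the other closings. $S$ is described somewhat informally, moving an endpoint pair to a new position in a disk containing four framed points. For the identity $d_{m'}d_m=d_md_{m'+1}$ we need the two routes to give identical relative skeins in $C_{n-2}$, not merely isotopic ones up to a twist of the endpoints. I would fix $S$ as a specific isotopy of $\partial H_{g_0}$ along the second coordinate of $B_i$, pushing the arcs through the collar, with everything supported away from the arcs $\alpha_{k,l},\beta_{k,l}$ being attached elsewhere. Both compositions are then images of the same link under isotopic boundary-fixing diffeomorphisms, and skein relations are local, so they agree in $C_{n-2}$.
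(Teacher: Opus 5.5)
Your proposal is correct and follows essentially the same route as the paper. The paper also reindexes $\partial_{i,j}$ as $d_{p(i,j)}$ with $p(i,j)=\sum_{k<i}n_k+j$ and verifies the presimplicial identity $d_id_j=d_{j-1}d_i$ for $i<j$. It splits this into the different-band case, where the links are literally identical, and the same-band case, where closing at level $s_j$ versus level $s_j-1$ gives ambient-isotopic links; $\partial\circ\partial=0$ then follows. Your extra step of fixing $S$ as a specific isotopy along the band coordinate spells out what the paper only asserts as an evident ambient isotopy.
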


\begin{proof}

We prove $\partial_{n-1}\circ \partial_{n}=0$ by showing that the chain complex  $(C_{n}(H_{g_0},U),\partial_n)$ indeed comes from a presimplicial module. Let $p(i,j)=\sum\limits_{k=1}^{i-1}n_k+j$ and $p^{-1}(i)=(r_i,s_i)$. Write $\partial_{i,j}$ as $d_{p(i,j)}$, we only need to show that for all $i<j$, $d_{i}\circ d_j=d_{j-1}\circ d_i$ is true. We check it directly by the following:

\begin{enumerate}
    \item Suppose $p^{-1}(i)=(r_i,s_i)$ and $p^{-1}(j)=(r_j,s_j)$ with $r_i\neq r_j$:
    \begin{eqnarray*}
        &&d_i\circ d_j(L) \\&=& d_i(L\cup\alpha_{r_j}^{s_j}-L\cup\beta_{r_j}^{s_j})\\
        &=&S[(L\cup\alpha_{r_j}^{s_j})\cup\alpha_{r_i}^{s_i}-(L\cup\alpha_{r_j}^{s_j})\cup\beta_{r_i}^{s_i}-(L\cup\beta_{r_j}^{s_j})\cup\alpha_{r_i}^{s_i}+(L\cup\beta_{r_j}^{s_j})\cup\beta_{r_i}^{s_i}]\\  
        &&d_{j-1}\circ d_i(L) \\&=& d_{j-1}(L\cup\alpha_{r_i}^{s_i}-L\cup\beta_{r_i}^{s_i})\\   &=&S[(L\cup\alpha_{r_i}^{s_i})\cup\alpha_{r_j}^{s_j}-(L\cup\alpha_{r_i}^{s_i})\cup\beta_{r_j}^{s_j}-(L\cup\beta_{r_i}^{s_i})\cup\alpha_{r_j}^{s_j}+(L\cup\beta_{r_i}^{s_i})\cup\beta_{r_j}^{s_j}]  
    \end{eqnarray*}
    \item Suppose $r_i= r_j$:
    \begin{eqnarray*}
        &&d_i\circ d_j(L) \\&=& d_i(L\cup\alpha_{r_j}^{s_j}-L\cup\beta_{r_j}^{s_j})\\  &=&S[(L\cup\alpha_{r_j}^{s_j})\cup\alpha_{r_i}^{s_i}-(L\cup\alpha_{r_j}^{s_j})\cup\beta_{r_i}^{s_i}-(L\cup\beta_{r_j}^{s_j})\cup\alpha_{r_i}^{s_i}+(L\cup\beta_{r_j}^{s_j})\cup\beta_{r_i}^{s_i}]\\ 
        &&d_{j-1}\circ d_i(L)\\
        &=& d_{j-1}(L\cup\alpha_{r_i}^{s_i}-L\cup\beta_{r_i}^{s_i})\\
        &=& S[(L\cup\alpha_{r_i}^{s_i})\cup\alpha_{r_j}^{s_j-1}-(L\cup\alpha_{r_i}^{s_i})\cup\beta_{r_j}^{s_j-1}-(L\cup\beta_{r_i}^{s_i})\cup\alpha_{r_j}^{s_j-1}+(L\cup\beta_{r_i}^{s_i})\cup\beta_{r_j}^{s_j-1}]  
    \end{eqnarray*}     
\end{enumerate}
 In this proof, all the unions with $\alpha_{r_i}^{*}$($\beta_{r_i}^{*}$) are over the pair of endpoints $(\bar{a}_{r_i}^{s_i},\bar{b}_{r_i}^{s_i})$ and  all the unions with $\alpha_{r_j}^{*}$($\beta_{r_j}^{*}$) are over the pair of endpoints $(\bar{a}_{r_j}^{s_j},\bar{b}_{r_j}^{s_j})$. Notice that the order of taking unions does not matter since they are not intersecting at all. For $(1)$, clearly they are the same; For $(2)$, though they are not identically the same, they are ambient isotopic to each other. Combine case $(1)$ and $(2)$, we complete the proof.
\end{proof}

In our definition, the chain complex depends on several choices: the exact position of framed points, the orientation of curves in $\mu$ and the ordering of the curves in $\mu$. We are going to show that the chain complexes are chain homotopy equivalent under different choices.

\begin{proposition}\label{ai}

Given $(H_{g_0},U)$ and $(H_{g_0},U')$, and $U'$ only differs from $U$ on the $i
$th curve such that $\mu_{i}'$ is the same as $\mu_i$, but $b_i$ is changed to another position $b_{i}'$ on $\mu_i$, then the two chain complexes $(C_{n}(H_{g_0},U),\partial_n)$ and $(C_{n}(H_{g_0},U'),\partial_n')$ are chain homotopy equivalent.
    
\end{proposition}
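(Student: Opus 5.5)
We construct an explicit chain map $\Phi_\ast\colon C_\ast(H_{g_0},U)\to C_\ast(H_{g_0},U')$ and show it is an isomorphism of chain complexes. An isomorphism is in particular a chain homotopy equivalence, so this proves the proposition.

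\textbf{Construction.} Moving $b_i$ to $b_i'$ along $\mu_i$ moves every parallel copy $b_{i,n}$ to $b_{i,n}'$ along $\mu_{i,n}$ inside the band $B_i$, using the product coordinates of $B_i$. Choose an isotopy $\phi_t$ of $\partial H_{g_0}$ with the following properties:
\begin{itemize}
\item it is supported in a neighborhood of $B_i$;
\item it preserves each curve $\mu_{i,n}$ setwise;
\item it fixes every $a_{i,n}$ and carries $b_{i,n}$ to $b_{i,n}'$;
\item it respects the framings.
\end{itemize}
Concretely, take $\phi_t$ to be a reparametrization of the $[-1,1]$-coordinate that is independent of $n$, damped to the identity outside $B_i$. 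To handle the framed points near the ends of the band, we may instead twist within $B_i$ only.

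Extend $\phi_t$ to an ambient isotopy $\Phi_t$ of $H_{g_0}$ using a collar of the boundary. Define $\Phi(L)=\Phi_1(L)$ for $L\in X_n(H_{g_0},U)$. This is a relative framed link whose endpoints are the framed points of $U'$, and it occupies the same pairs in each band, so it lies in $X_n(H_{g_0},U')$. The map $\Phi$ respects isotopy rel boundary and the Kauffman bracket relations, since both are local and $\Phi_1$ is a diffeomorphism. It is therefore a well-defined $\mathbb{Z}[A^{\pm1}]$-linear map, and its inverse is induced by $\Phi_1^{-1}$. Hence each $\Phi_n$ is an isomorphism.

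\textbf{Chain map.} We check $\Phi\circ\partial_{k,j}=\partial'_{k,j}\circ\Phi$ for every $k$ and $j$. The signs in $\partial_n$ depend only on the numbers $n_s$, which $\Phi$ preserves, so this identity suffices.

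\begin{itemize}
\item For $k\neq i$, $\Phi_1$ is the identity near $B_k$, so $\Phi$ commutes with taking the union with $\alpha_{k,j}$ or $\beta_{k,j}$.
\item For $k=i$, $\Phi_1$ maps $\alpha_{i,j}$ (from $a_{i,j}$ to $b_{i,j}$) onto $\alpha'_{i,j}$, and $\beta_{i,j}$ onto $\beta'_{i,j}$, because it preserves $\mu_{i,j}$ and fixes $a_{i,j}$. Consequently $\Phi(L\cup\alpha_{i,j})=\Phi(L)\cup\alpha'_{i,j}$, and the same holds for $\beta$, after pushing into the handlebody compatibly.
\item The shift operator $S$ acts inside small disks around consecutive pairs of framed points. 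Since $\phi_t$ acts uniformly in $n$, it carries these disks to the corresponding disks for $U'$, so $S$ commutes with $\Phi$ up to isotopy.
\end{itemize}

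\textbf{Main obstacle.} The delicate point is that $\Phi_1$ is genuinely compatible with $\alpha$, $\beta$ and $S$. If $b_i'$ lies past other marked structure, for instance if the arc from $b_i$ to $b_i'$ crosses $a_i$, then the roles of $\alpha$ and $\beta$ could swap. We rule this out by choosing the isotopy to move $b$ along the arc of $\mu_i$ that avoids $a_i$, which is always possible since only $b$ moves. Where the isotopy pushes past the region used by $S$, we argue that the two composites differ by an isotopy rel boundary. Such links are equal in the relative skein module.
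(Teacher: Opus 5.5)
Your proposal is correct and follows the paper's own argument: a homeomorphism of $\partial H_{g_0}$ supported near $\mu_i$, fixing $a_i$ and sliding $b_i$ along $\mu_i$, extended over a collar to all of $H_{g_0}$, induces a chain isomorphism. The only difference is that you spell out the compatibility with $\alpha$, $\beta$, the shift operator $S$ and the signs, which the paper leaves implicit.
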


\begin{proof}

There is a homeomorphism from $\partial(H_{g_0})$ to itself that fixes $a_i$ that moves $b_i$ along $\mu_i$ that fix the boundary of a tubular neighborhood of $\mu_i$. This homeomorphism extends to a homeomorphism from $H_{g_0}$ to itself that only moves the points in a collar neighborhood of $\partial(H_{g_0})$. This homeomprphism induces a chain isomorphism.

\end{proof}

\begin{proposition}\label{ordering}

Given $(H_{g_0},U)$ and $(H_{g_0},U')$, and $U'$ only differs from $U$ by a change of ordering, then the two chain complexes$(C_{n}(H_{g_0},U),\partial_n)$ and $(C_{n}(H_{g_0},U'),\partial_n')$ are chain homotopy equivalent.
    
\end{proposition}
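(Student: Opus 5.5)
We will build an explicit chain isomorphism, not just a homotopy equivalence. It suffices to treat an adjacent transposition of the ordering, swapping $\mu_k$ and $\mu_{k+1}$, since every reordering is a composition of such swaps. Note that $C_n(H_{g_0},U)$ and $C_n(H_{g_0},U')$ have literally the same underlying modules. A basis link $L\in X_n(H_{g_0},U)$ is determined by which pairs of framed points it occupies on each $B_i$, and the occupied sets do not depend on the ordering. Only the differential changes, through the sign $(-1)^{\sum_{s<i} n_s}$ and the relabelling of faces $d_{p(i,j)}$. So the candidate chain map is
\[
\Phi_n(L)=\epsilon(L)\,L,\qquad \epsilon(L)=(-1)^{n_k(L)\,n_{k+1}(L)},
\]
the sign of the shuffle permutation that moves the block of $n_{k+1}$ faces past the block of $n_k$ faces. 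This is the usual Koszul sign for reordering a presimplicial structure.

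The main step is checking $\partial'_n\Phi_n=\Phi_{n-1}\partial_n$ face by face. Each term $\partial_{i,j}(L)$ is the same element in both complexes, since it is defined by the same $W_{i,j}$ and the same shift $S$. We therefore only compare signs. For $i\neq k,k+1$, removing a pair on $B_i$ does not change $n_k$ or $n_{k+1}$. The prefix sum $\sum_{s<i}n_s$ is also the same in both orderings, because for $i<k$ neither $n_k$ nor $n_{k+1}$ enters it, and for $i>k+1$ both enter it. So the signs agree and $\epsilon$ cancels.

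For a face on $B_k$, the unprimed sign is $(-1)^{\sum_{s<k}n_s+j}$ and the primed sign is $(-1)^{\sum_{s<k}n_s+n_{k+1}+j}$. Meanwhile $\epsilon$ changes from $(-1)^{n_kn_{k+1}}$ on $L$ to $(-1)^{(n_k-1)n_{k+1}}$ on the image. The ratio of these changes is $(-1)^{n_{k+1}}$, which matches the discrepancy between the two signs exactly. The case of a face on $B_{k+1}$ is symmetric: the primed sign loses $n_k$ from the prefix, and $\epsilon$ changes by $(-1)^{n_k}$. Since every $\Phi_n$ is an involutive diagonal sign map, it is an isomorphism, and hence in particular a chain homotopy equivalence.

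The expected obstacle is confirming that nothing beyond signs depends on the ordering. We must make sure that the shifting operator $S$ and the within-$B_i$ indexing $j$ are defined solely in terms of each $B_i$ separately. Reading the definition, $S$ acts on each $B_i$ independently, filling gaps by moving pairs of higher second coordinate. The indices $(i,j)$ enter only through $p(i,j)$. If $S$ had instead been defined using the global ordering (the "$i+1$" in its description), we would first verify on a small disc that the result is isotopic regardless of the convention. We would argue as in the proof that $\partial^2=0$: the relevant unions and moves are supported in disjoint discs, so they commute up to ambient isotopy.
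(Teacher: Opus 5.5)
Your proposal is correct and is essentially the paper's proof. The paper treats the swap of $\mu_1$ and $\mu_2$ using the map $f_n(L)=(-1)^{n_1n_2}\overline{L}$, compares the signs in $\partial'_n\circ f_n$ and $f_{n-1}\circ\partial_n$ block by block, and observes that the same formula gives the inverse, so the complexes are actually isomorphic. Your handling of a general adjacent transposition, and your remark that $S$ acts on each $B_i$ separately, simply spell out what the paper leaves under ``without loss of generality.''
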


\begin{proof}
Without the lose of generality, we only need to consider the case where we switch $\mu_1$ and $\mu_2$. For any $L\in X_{n}(H_{g_0},U)$, we define a map $$f_n:(C_{n}(H_{g_0},U),\partial_n)\rightarrow (C_{n}(H_{g_0},U'),\partial_n')$$ 
by $$f_n(L)=(-1)^{n_1n_2}\overline{L}$$
where $\overline{L}$ is the same map as $L$ but considered as an element in $C_{n}(H_{g_0},U')$.

   {\small 
   \begin{eqnarray*}
        &&\partial_n'\circ f_n(L)=(-1)^{n_1n_2}\partial_n'(\overline{L})\\
        &=&(-1)^{n_1n_2}[\sum\limits_{t=1}^{n_2}(-1)^t\overline{\partial_{2,t}(L)}+(-1)^{n_2}\sum\limits_{t=1}^{n_1}(-1)^t\overline{\partial_{1,t}(L)}+(-1)^{n_1+n_2}\sum\limits_{i=3}^{g}(-1)^{\sum\limits_{s=1}^{i-1}n_s}\sum\limits_{t=1}^{n_i}(-1)^t\overline{\partial_{i,t}(L)}]\\
        &=&(-1)^{(n_1+1)n_2}\sum\limits_{t=1}^{n_1}(-1)^t\overline{\partial_{1,t}(L)}+(-1)^{n_1n_2}\sum\limits_{t=1}^{n_2}(-1)^t\overline{\partial_{2,t}(L)}+(-1)^{n_1+n_2+n_1n_2}\sum\limits_{i=3}^{g}(-1)^{\sum\limits_{s=1}^{i-1}n_s}\sum\limits_{t=1}^{n_i}(-1)^t\overline{\partial_{i,t}(L)}]\\ 
        &&f_{n-1}\circ \partial_n(L)=f_{n-1}(\sum\limits_{i=1}^{g}(-1)^{\sum\limits_{s=1}^{i-1}n_s}\sum\limits_{j=1}^{n_i}(-1)^t\partial_{i,t}(L))\\
        &=&f_{n-1}(\sum\limits_{t=1}^{n_1}(-1)^t\partial_{1,t}(L)+(-1)^{n_1}\sum\limits_{t=1}^{n_2}(-1)^t\partial_{2,t}(L)+(-1)^{n_1+n_2}\sum\limits_{i=3}^{g}(-1)^{\sum\limits_{s=1}^{i-1}n_s}\sum\limits_{j=1}^{n_i}(-1)^t\partial_{i,t}(L))\\
        &=&(-1)^{(n_1-1)n_2}\sum\limits_{t=1}^{n_1}(-1)^t\overline{\partial_{1,t}(L)}+(-1)^{n_1}(-1)^{n_1(n_2-1)}\sum\limits_{t=1}^{n_2}(-1)^t\overline{\partial_{2,t}(L)}\\
        &&+(-1)^{n_1+n_2}(-1)^{n_1n_2}\sum\limits_{i=3}^{g}(-1)^{\sum\limits_{s=1}^{i-1}n_s}\sum\limits_{t=1}^{n_i}(-1)^t\overline{\partial_{i,t}(L)}\\     
        &=&(-1)^{(n_1-1)n_2}\sum\limits_{t=1}^{n_1}(-1)^t\overline{\partial_{1,t}(L)}+(-1)^{n_1n_2}\sum\limits_{t=1}^{n_2}(-1)^t\overline{\partial_{2,t}(L)}+(-1)^{n_1+n_2+n_1n_2}\sum\limits_{i=3}^{g}(-1)^{\sum\limits_{s=1}^{i-1}n_s}\sum\limits_{t=1}^{n_i}(-1)^t\overline{\partial_{i,t}(L)}    
    \end{eqnarray*}
    }
    Compare the above formula, we have $f_n$ is a chain map. Similarly, one can define $$g_n:(C_{n}(H_{g_0},U'),\partial_n')\rightarrow (C_{n}(H_{g_0},U),\partial_n)$$ 
by $$g_n(L)=(-1)^{n_1n_2}\overline{L}$$
which is clearly the inverse of $f_n$. Thus, we finish the proof.
\end{proof}

\begin{proposition}\label{orientation}

Given $(H_{g_0},U)$ and $(H_{g_0},U')$, and $U'$ only differs from $U$ by a change orientation on one of the $\mu_is$, then the two chain complexes$(C_{n}(H_{g_0},U),\partial_n)$ and $(C_{n}(H_{g_0},U'),\partial_n')$ are chain homotopy equivalent.
    
\end{proposition}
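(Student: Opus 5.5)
The plan is to show that reversing the orientation of $\mu_i$ changes each face map on that curve by a global sign. Then a sign-twisted identity gives a chain isomorphism, which is in particular a chain homotopy equivalence.

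First I pin down what the orientation of $\mu_i$ affects. In $U$, the arc $\alpha_{i,j}$ runs from $a_{i,j}$ to $b_{i,j}$ along the orientation, and $\beta_{i,j}$ is the complementary arc. Reversing the orientation of $\mu_i$ has two effects. The arc following the new orientation from $a_{i,j}$ to $b_{i,j}$ is now the old $\beta_{i,j}$, so $\alpha'_{i,j}=\beta_{i,j}$ and $\beta'_{i,j}=\alpha_{i,j}$. The framings, which point to the right of the direction of travel, are reversed. To separate these two effects, I first use Proposition \ref{ai} and the homeomorphism argument in its proof. This lets me move $b_i$, and likewise $a_i$, along $\mu_i$, and also apply a half-twist supported in a small disk around each pair of points. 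Together these reduce $U'$ to a datum $U''$ with $\mu''_i=\mu_i$ as unoriented curves, the same framed endpoint positions as $U$, and only the labels of $\alpha$ and $\beta$ swapped. Concretely, I choose an orientation-preserving self-homeomorphism of $H_{g_0}$ supported in a collar neighbourhood of $B_i$ that reflects $B_i$ in its first coordinate. This exchanges $a_{i,n}$ and $b_{i,n}$ and carries the new framing convention to the old one; it extends because $B_i$ is an annulus, where the reflection is realized by rotating by $\pi$ about the core of a neighbourhood. This step is the main obstacle. The homeomorphism must not disturb the other $B_k$, and it must send relative framed links with endpoints at $(a_{i,n},b_{i,n})$ bijectively onto those with endpoints at $(b_{i,n},a_{i,n})$, compatibly with the shift operator $S$. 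I expect both to hold because everything is local to $B_i$.

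Once the two data are identified this way, the chain groups are the same, $C_n(H_{g_0},U)=C_n(H_{g_0},U'')$. On each generator, the new face map is
\[
W'_{i,j}(L)=L\cup\beta_{i,j}-L\cup\alpha_{i,j}=-W_{i,j}(L),
\]
so $\partial'_{i,j}=-\partial_{i,j}$, while $\partial'_{k,j}=\partial_{k,j}$ for $k\neq i$.

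Next I define
\[
f_n\colon C_n(H_{g_0},U)\to C_n(H_{g_0},U''),\qquad f_n(L)=(-1)^{n_i(L)}\overline{L},
\]
where $\overline{L}$ denotes $L$ regarded as an element of $C_n(H_{g_0},U'')$. To check that $f$ is a chain map, I compare the terms of $\partial'_n f_n(L)$ and $f_{n-1}\partial_n(L)$ term by term:
\begin{itemize}
\item A term $\partial_{k,j}$ with $k\neq i$ does not change $n_i$. The prefactors $(-1)^{\sum_{s<k}n_s}$ agree on both sides, and the face map is unchanged.
\item A term $\partial_{i,j}$ lowers $n_i$ by one. This produces an extra factor $-1$ in $f_{n-1}$, which exactly compensates for $\partial'_{i,j}=-\partial_{i,j}$.
\end{itemize}
The prefactor signs themselves depend only on the counts $n_s$, so they are identical on both sides.

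Finally, $f_n$ is an involution up to the identification, so it is a chain isomorphism. Composing it with the chain isomorphisms coming from Proposition \ref{ai} gives the required equivalence between $(C_n(H_{g_0},U),\partial_n)$ and $(C_n(H_{g_0},U'),\partial'_n)$.
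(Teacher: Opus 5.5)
Your algebraic core is correct and takes a genuinely different, simpler route than the paper. However, the geometric reduction in your ``Concretely'' sentence fails and should be discarded.

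\textbf{The reflection cannot exist.}
\begin{itemize}
\item The map $(x,y)\mapsto(-x,y)$ on $B_i$ is orientation-reversing on $\partial H_{g_0}$. So it is not the boundary restriction of any orientation-preserving homeomorphism of $H_{g_0}$.
\item Any homeomorphism supported near $B_i$ is the identity on $\partial H_{g_0}\setminus B_i$. It therefore acts trivially on $H_1(B_i)$ and cannot reverse $\mu_i$.
\item A rotation by $\pi$ about the core preserves the direction of the core, so it does not produce $x\mapsto -x$ at all.
\item Being orientation-reversing, such a map would also interchange ``left'' and ``right''. So it would not convert the new framing convention into the old one.
\end{itemize}

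\textbf{The step is unnecessary.} A framed point is an unoriented interval transverse to $\mu_i$: a band meets $\partial H_{g_0}$ in an interval with no preferred direction. So the interval ``pointing right'' for one orientation is the same framed point as for the other. With $B_i$, the copies $\mu_{i,n}$ and the points held fixed, your $U'$ already is your $U''$. It has the same chain groups and the same shift operator $S$, with $\alpha'_{i,j}=\beta_{i,j}$ and $\beta'_{i,j}=\alpha_{i,j}$. If one instead lets the positions of $a_{i,n}$ and $b_{i,n}$ swap, as the paper does, the right tool is an orientation-preserving push along $\mu_i$ as in Proposition~\ref{ai}, not a reflection. Such a push exchanges $a_{i,n}$ and $b_{i,n}$ and carries one complementary arc onto the other.

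With that repair, your verification that $L\mapsto(-1)^{n_i(L)}L$ is a chain isomorphism is complete. You have $\partial'_{i,j}=-\partial_{i,j}$ and $\partial'_{k,j}=\partial_{k,j}$ for $k\neq i$. The prefactors $(-1)^{\sum_{s<k}n_s}$ are read off from $L$ on both sides.

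\textbf{Comparison with the paper.} The paper argues differently. In its convention, reversing $\mu_i$ reverses the indexing of the parallel copies. Also, $\alpha_{i,t}$ corresponds to $\alpha'_{i,n_i-t+1}$, with no exchange of $\alpha$ and $\beta$. The paper then:
\begin{itemize}
\item slides the ends of $L$ from copy $j$ to copy $n_i-j+1$;
\item corrects the framings by half twists;
\item uses the sign $(-1)^{n_i(n_i-1)/2}$ of the order-reversing permutation of the faces.
\end{itemize}
Your version needs no manipulation of links. It shows plainly that the orientation only decides which arc carries the plus sign. It covers the reading in which $B_i$ and its copies are unchanged. If the copies are also reindexed, as in the paper, you still need the paper's reindexing step on top of your sign twist.
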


\begin{proof}
Based on our definition, if we change the orientation of one curve $\mu_i$, the position of the pair of framed points on $\mu_i$ will switch and the framing will point to the opposite direction. For any $L\in X_{n}(H_{g_0},U)$, we define a map $$f_n:(C_{n}(H_{g_0},U),\partial_n)\rightarrow (C_{n}(H_{g_0},U'),\partial_n')$$ 
by $$f_n(L)=(-1)^{\frac{n(n-1)}{2}}\overline{L}$$
where $\tilde{L}$ is obtained from $L$ in the following manner:
\begin{enumerate}
    \item $n_i=0$, $\overline{L}$ is the same map as $L$ but considered as an element in $C_{n}(H_{g_0},U')$.
    \item $n_i>0$, we can find a small disk $D$ on $\partial(H_{g
    _0})$ containing $\{a_{i,1},\dots,a_{i,n_i},b_{i,1},\dots,b_{i,n_i},$\\$a_{i,1}',\dots,a_{i,n_i}',b_{i,1}',\dots,b_{i,n_i}'\}$. Inside this disk, we shift our link $L$ so that the pair of endpoints of $L$ originally connected to $(a_{i,j},b_{i,j})$ is now connected to $(a_{i,n_i-j+1}',b_{i,n_i-j+1}')$. Notice that, at this stage, the framing of the shifted $L$ does not match with the framing of the framed points $a_{i,j}'$ and $b_{i,j}'$, so we further twist each end of $L$ connecting to $a_{i,j}'$ by a positive half twist and that of $L$ connecting to $b_{i,j}'$ by a negative half twist. We denoted it by $\overline{L}$ considered as an element in $C_{n}(H_{g_0},U')$.
\end{enumerate}
Next, without the loss of generality, we check $f_n$ is a chain map for $i=1$:
       \begin{eqnarray*}
        &&\partial_n'\circ f_n(L)=\partial_n'(\overline{L})\\
        &=&[\sum\limits_{t=1}^{n_1}(-1)^t\partial_{1,t}(\overline{L})+(-1)^{n_1}\sum\limits_{i=2}^{g}(-1)^{\sum\limits_{s=1}^{i-1}n_s}\sum\limits_{t=1}^{n_i}(-1)^t\partial_{i,t}\overline{L}]\\
        &&f_{n-1}\circ \partial_n(L)=f_{n-1}(\sum\limits_{i=1}^{g}(-1)^{\sum\limits_{s=1}^{i-1}n_s}\sum\limits_{t=1}^{n_i}(-1)^t\partial_{i,t}(L))\\
        &=&f_{n-1}(\sum\limits_{t=1}^{n_1}(-1)^t\partial_{1,t}(L)+(-1)^{n_1}\sum\limits_{i=2}^{g}(-1)^{\sum\limits_{s=1}^{i-1}n_s}\sum\limits_{t=1}^{n_i}(-1)^t\partial_{i,t}(L))\\
        &=&\sum\limits_{t=1}^{n_1}(-1)^t\overline{\partial_{1,t}(L)}+(-1)^{n_1}\sum\limits_{i=2}^{g}(-1)^{\sum\limits_{s=1}^{i-1}n_s}\sum\limits_{t=1}^{n_i}(-1)^t\overline{\partial_{i,t}(L)}  
    \end{eqnarray*}
    For $i>1$, $\overline{\partial_{i,t}(L)}=\partial_{i,t}\overline{(L)}$ since the gluing operation with $\alpha_{i,j}$($\beta_{i,j}$) happens far away from the operation of $f_n$. For $i=1$, we compare the following:
    \begin{eqnarray*}
        &&(-1)^t\overline{\partial_{1,t}(L)}\\
        &=&(-1)^t(-1)^{\frac{(n_1-1)(n_1-2)}{2}}(\overline{S(L\bigcup_{(\bar{a}_{1,t},\bar{b}_{1,t})}\alpha_{1,t}})-\overline{S(L\bigcup_{(\bar{a}_{1,t},\bar{b}_{1,t})}\beta_{1,t}}))\\
        &&(-1)^{n_1-t+1}\partial_{1,n_1-t+1}((-1)^{\frac{n_1(n_1-1)}{2}}\overline{L})\\
        &=&(-1)^{n_1-t+1}(-1)^{\frac{n_1(n_1-1)}{2}}(S(\overline{L}\bigcup_{(\bar{a}_{1,t},\bar{b}_{1,t})}\alpha_{1,n_1-t+1}')-S(\overline{L}\bigcup_{(\bar{a}_{1,t},\bar{b}_{1,t})}\beta_{1,n_1-t+1}'))
    \end{eqnarray*}
    Notice that the coefficients are exactly the same. By our definitions, the two half twists will bring $\alpha'_{1,n_1-t+1}$ back to $\alpha_{1,n_1-t+1}$. Then, by an obvious ambient isotopy only move points in a collar neighborhood of the disk $D$, we have $\overline{S(L\bigcup\limits_{(\bar{a}_{1,t},\bar{b}_{1,t})}\alpha_{1,t})}=S(\overline{L}\bigcup\limits_{(\bar{a}_{1,t},\bar{b}_{1,t})}\alpha_{1,n_1-t+1}')$ and $\overline{S(L\bigcup\limits_{(\bar{a}_{1,t},\bar{b}_{1,t})}\beta_{1,t})}=S(\overline{L}\bigcup\limits_{(\bar{a}_{1,t},\bar{b}_{1,t})}\beta_{1,n_1-t+1}')$. This homomorphism is clearly an isomorphism.
\end{proof}

\begin{remark}
    By the definition of the Heegaard skein complex above and Theorem \ref{propkbsm2}(and the discussion there blow), we have the zeroth homology group $H_{0}(H_{g_0},U)$ is exactly the Kauffman bracket skein module of the three manifold $M$ obtained from $H_{g_0}$ with $2$-handles attached based on the information in $U$.
\end{remark}

\section{the invariance under 2-handle sliding moves}
In this section, we will construct a chain isomorphism between the chain complexes of $(H_{g_0},U)$ and $(H_{g_0},U')$ respectively, where $U'$ differs from $U$ by a $2$-handle sliding. In the following context, when we talk about the curves, when it is clear, we do not distinguish the map and its image curve.

Suppose $(H_{g_0},U')$ is obtained from $(H_{g_0},U)$ by sliding $\mu_i$ over $\mu_j$, and we denote the new curve by $\mu_{i}'$, the framed points by $a_{i}'$ and $b_{i}'$ and the two curves $\alpha_{i}'$ and $\beta_{i}'$. Let us consider the following setup:
\begin{enumerate}
    \item By connectedness of $\partial(H_{g'})-\cup_i \mu_i$, we can move $\alpha_i$ and $\alpha_j$ to a small disk, and a local picture is illustrated as in Figure \ref{n2hs}(ignoring the red arcs). Notice that, by Proposition \ref{orientation}, the local picture is always achievable by changing orientations if necessary. Let us choose $\mu_{i}'$ so that $\beta_i \subset \mu_{i}'$. We choose the framed points on $\mu_{i}'$ by $a_{i}'=a_i$ and $b_{i}'=b_i$, so we have $\beta_{i}'=\beta_i$ and $\alpha_{i}'$ is a parallel copy of $b_{ij}\cup \beta_j\cup a_{ij}$ pushed off on $\partial(H_{g'})$($\alpha_{i}'$ is to the right of $\beta_j$ following orientation). The orientation of $\mu_{i}'$ is chosen so that it coincides with the orientation on $\beta_{i}'=\beta_i$. Also, we setup its tubular neighborhood $B_{i}'$ and countably parallel copies as before.\footnote{Here we make such kind of setup is to make sure the chain map defined later commutes with differentials.}

%\item For each $L\in X_{n}(H_{g_0},U)$ with $n_i$ pairs of endpoints attached on $B_i$ and $n_j$ pairs of endpoints attached on $B_j$, we consider Figure \ref{n2hs}. For $0\leq k\leq n_i$, let us give some notations for certain elements $L_{k}$ in $X_{n}(H_{g_0},U')$: 
    %$L_{k}=L\cup b_{ij}^k \cup a_{ij}^k$ for $1\leq k\leq n_i$ and $L_{0}$ that is the same embedding as $L$ but viewed as a relative framed link in the three manifold  represented by $(H_{g_0},U')$.

    \item For each $L\in X_{n}(H_{g_0},U)$ with $n_i$ pairs of endpoints attached on $B_i$ and $n_j$ pairs of endpoints attached on $B_j$, we consider Figure \ref{n2hs}. We need some notation for certain framed links in $C_{n}(H_{g_0},U')$: write $\vec n_i=\{1,2,\dots,n_i\}$ as an ordered set, let $T=\{t_1,\dots,t_{|T|}\}$ be an ordered subset of $\vec n_i$, define $L_{T}\footnote{ $L_{\emptyset}$ is the same embedding as $L$ but viewed as a relative framed link in the three manifold represented by $(H_{g_0},U')$. }=S(L\bigcup\limits_{t\in T}(b_{i,j}^t\cup a_{i,j}^t))$.
    
\end{enumerate}

\begin{figure}[H]
    \centering
    \includegraphics[width=0.9\linewidth]{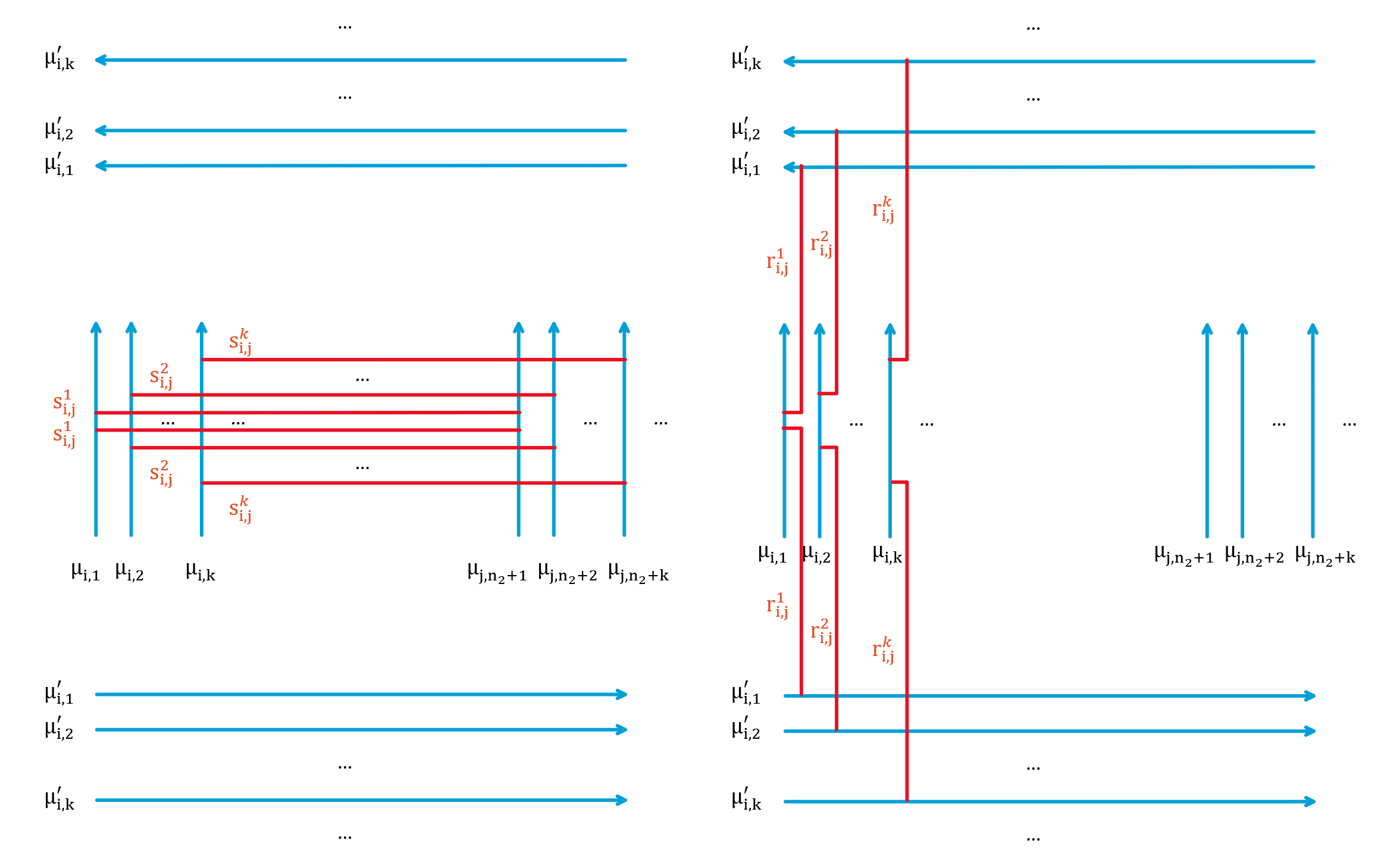}
    \caption{parallel copies of $2$-handle sliding}\label{n2hs}
\end{figure}

%Now, we are ready to define the chain map and by Proposition \ref{ordering}, we can assume we slide $\mu_1$ over $\mu_2$.

Now, we are ready to define the chain map and by Proposition \ref{ordering}, we can assume we slide $\mu_1$ over $\mu_{2}$ without the loss of generality. Consider the setup as above for $i=1$, $j=2$, we define homomorphisms $$f_n:(C_{n}(H_{g_0},U),\partial_n) \rightarrow (C_{n}(H_{g_0},U'),\partial_n')$$
such that for any $L\in X_{n}(H_{g_0},U)$, $f_{n}(L)=\sum\limits_{T\subset\vec n_{1}}(-1)^{\#T}L_{T}$, where $$\#T=|T|n_2+\sum\limits_{i=0}^{|T|-1}(n_1-t_{|T|-i}-i)=(n_1+n_2)|T|-\sum\limits_{t\in T} t-\sum\limits_{i=0}^{|T|-1}i.$$ We are going to prove that $f_n$ is a chain isomorphism.

\begin{lemma}\label{key lemma}

For $L\in X_{n}(H_{g_0},U)$ , we have $f_{n-1}\circ \partial_n=\partial_n'\circ f_n$.
    
\end{lemma}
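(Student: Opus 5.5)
We verify the identity on a basis element $L\in X_n(H_{g_0},U)$ and expand both sides as signed sums of face maps. With the presimplicial notation from the proof that $\partial^2=0$, split $\partial_n$ and $\partial_n'$ into three groups of faces: those on $B_i$ for $i\ge 3$, those on $B_2$ (the curve $\mu_2$, unchanged), and those on $B_1$, respectively $B_1'$ (the curve $\mu_1$, respectively $\mu_1'$). Every term $L_T$ of $f_n(L)$ still has $n_1$ pairs of endpoints on $B_1'$, and the bands $b_{1,2}^t\cup a_{1,2}^t$ lie in a neighborhood of $\beta_2$ and of the local disk in Figure \ref{n2hs}.

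\textbf{Faces on $B_i$ with $i\ge3$.} These faces are local near $\mu_i$, far from the modification, so they commute with $T\mapsto L_T$. The numbers $n_1,n_2$ do not change, so the signs $(-1)^{\#T}$ are compatible, and these terms cancel directly.

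\textbf{Faces on $B_1$.} Contract a pair $t$ on $B_1$. On the left side, $\partial_{1,t}(L)$ has $n_1-1$ pairs, and $f_{n-1}$ sums over subsets $T'$ of $\vec n_1\setminus\{t\}$, reindexed. On the right side, for each $T$, apply $\partial'_{1,t}$ to $L_T$; this glues $\alpha_1'$ or $\beta_1'=\beta_1$. Gluing $\beta_1'$ agrees with gluing $\beta_1$ to $L$ with the band attached or not. Gluing $\alpha_1'$, which is parallel to $b_{12}\cup\beta_2\cup a_{12}$, is isotopic to gluing $\alpha_1$ after sliding over $\mu_2$ through the band. So the $\alpha$ part of $\partial'_{1,t}(L_{T})$ for $t\notin T$ should match the $\alpha$ part of $\partial'_{1,t}(L_{T\cup\{t\}})$ with an opposite sign, up to the $\beta_2$-glued piece. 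These leftover pieces are exactly terms in which the pair $t$ has turned into a pair on $B_2$ glued along $\beta_2$. They must cancel against the $B_2$-faces.

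\textbf{Faces on $B_2$.} Here $\partial'_{2,s}(L_T)$ contracts a pair on $\mu_2$. Adding a band $b_{12}^t\cup a_{12}^t$ effectively inserts a parallel pair next to $\mu_2$. After the shift operator $S$, the terms $L_T$ with $t\in T$ produce extra $B_2$-faces at positions interleaved with the original $n_2$ pairs. The original $n_2$ faces correspond to $f_{n-1}\partial_{2,s}(L)$. The extra faces cancel the leftover $\beta_2$-pieces from the $B_1$ step.

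\textbf{Main obstacle.} The hardest part is the sign bookkeeping, that is, checking that
\[
\#T=(n_1+n_2)|T|-\sum_{t\in T}t-\sum_{i=0}^{|T|-1}i
\]
produces exactly these cancellations. The approach is to fix a pair $(T,t)$ and compute the parity difference between the two matched terms. This difference combines three contributions: the face sign $(-1)^{p(i,j)}$, the change of $|T|$ by one, and the renumbering of the later elements of $T$ after removing $t$. Each comparison should reduce to a difference of $\pm1$, with the reindexing term $\sum i$ absorbing the shifts. One also has to check that the shift $S$ sends the inserted pair to the predicted position; the local picture of Figure \ref{n2hs} and Proposition \ref{ai} let us fix positions so that the resulting isotopies hold. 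The lemma then follows by summing these matched cancellations.
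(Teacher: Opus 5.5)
Your outline is the paper's: split $\partial_n$ and $\partial_n'$ by curve, match the $\mu_i$-faces ($i\ge3$) and the original $n_2$ faces on $\mu_2$ directly, account for the $\mu_1$-faces by $\mu_1'$-faces plus new $\mu_2$-faces, then check signs. However, the central step rests on a wrong picture of $L_T$.

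The arcs $b^t_{1,2},a^t_{1,2}$ carry the endpoints of pair $t$ off $B_1$ onto $B_2$. So $L_T$ has $n_1-|T|$ pairs on $B_1'$ and $n_2+|T|$ on $B_2$. The moved pairs sit at positions $n_2+1,\dots,n_2+|T|$ in the order of $T$, after the original ones rather than interleaved with them. Interleaving would destroy the identity $(\partial_{2,t}L)_T=\partial'_{2,t}(L_T)$ for $t\le n_2$, which your $B_2$ step needs.

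Consequently $\partial'_{1,t}(L_{T\cup\{t\}})$, which you pair against $\partial'_{1,t}(L_T)$, is not a term of $\partial'_nf_n(L)$. Nor is any ``sliding over $\mu_2$'' available: the chain groups are relative skein modules of the bare handlebody, and the discrepancy between $\alpha_1$ and $\alpha_1'$ is exactly what a new $\mu_2$-face records. (For $i\ge3$ it is $n_1+n_2$, not $n_1,n_2$ separately, that is preserved; that suffices.)

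The missing ingredient is the three-term identity: for $t\in\vec n_1$ and $P\subset\overrightarrow{n_1-1}$,
\[
(\partial_{1,t}L)_P=\partial'_{1,t-\langle t,P\rangle}(L_{P[t]})+\partial'_{2,n_2+1+\langle t,P\rangle}(L_{P[t]\cup\{t\}}),
\]
where $\langle t,P\rangle$ counts the elements of $P$ below $t$, and $P[t]$ raises the elements $\ge t$ by one. The three geometric matchings are:
\begin{itemize}
\item the $\alpha_1'$-term of the first summand cancels the $\beta_2$-term of the second, since $\alpha_1'$ is parallel to $b_{12}\cup\beta_2\cup a_{12}$;
\item the $\beta_1'=\beta_1$ term equals the $\beta_1$-term on the left;
\item the $\alpha_2$-term of the second summand equals the $\alpha_1$-term on the left, which is where the particular setup of Figure \ref{n2hs} is used.
\end{itemize}
The assignments $(t,P)\mapsto(P[t],\,t-\langle t,P\rangle)$ and $(t,P)\mapsto(P[t]\cup\{t\},\,n_2+1+\langle t,P\rangle)$ are bijections onto all $\mu_1'$-faces and all new $\mu_2$-faces of $\partial'_nf_n(L)$, so nothing is left over.

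Finally, the sign check you defer is where the formula for $\#T$ is actually tested, and the target is not ``a difference of $\pm1$.'' All three terms must carry the same parity,
\[
t+(n_1+n_2-1)|P|-\sum_{a\in P}a-\sum_{i=0}^{|P|-1}i .
\]
This follows from $\sum_{b\in P[t]}b=\sum_{a\in P}a+|P|-\langle t,P\rangle$ together with the prefactor $(-1)^{n_1-|T|}$ on the $\mu_2$-faces of $L_T$.
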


\begin{proof}\
Consider the following:
\iffalse

    \begin{eqnarray*}
        &&f_{n-1}\circ \partial_n(L)\\
        &=&f_{n-1}(\sum\limits_{t=1}^{n_1}(-1)^t\partial_{1,t}(L)+(-1)^{n_1}\sum\limits_{t=1}^{n_2}(-1)^t\partial_{2,t}(L)+(-1)^{n_1+n_2}\sum\limits_{i=3}^{g}(-1)^{\sum\limits_{s=1}^{i-1}n_s}\sum\limits_{t=1}^{n_i}(-1)^j\partial_{i,t}(L))\\      
        && \partial_n'\circ f_{n}(L)\\  &=&\partial_n'(f_{n}(L)=\sum\limits_{T\subset\vec n_{1}}(-1)^{\#T}L_{T})\\
        &=&\sum\limits_{t=1}^{1}(-1)^t\partial_{1,t}'(L_{\emptyset})+(-1)^{1}\sum\limits_{t=1}^{n_2}(-1)^t\partial_{2,t}'(L_{\emptyset})+(-1)^{1+n_2}\sum\limits_{i=3}^{g}(-1)^{\sum\limits_{s=1}^{i-1}n_s}\sum\limits_{t=1}^{n_i}(-1)^t\partial_{i,t}'(L_{\emptyset})\\
        &+& \sum\limits_{t=1}^{0}(-1)^t(-1)^{n_2}\partial_{1,t}'(L_{\{1\}})+\sum\limits_{t=1}^{n_2+1}(-1)^t(-1)^{n_2}\partial_{2,t}'(L_{\{1\}})+(-1)^{n_2+1}\sum\limits_{i=3}^{g}(-1)^{n_2}(-1)^{\sum\limits_{s=1}^{i-1}n_s}\sum\limits_{t=1}^{n_i}(-1)^t\partial_{i,t}'(L_{\{1\}})\\       &=&-\partial_{1,1}'(L_{\emptyset})-\sum\limits_{t=1}^{n_2}(-1)^t\partial_{2,t}'(L_{\emptyset})+(-1)^{1+n_2}\sum\limits_{i=3}^{g}(-1)^{\sum\limits_{s=1}^{i-1}n_s}\sum\limits_{t=1}^{n_i}(-1)^t\partial_{i,t}'(L_{\emptyset})\\
        &+& \sum\limits_{t=1}^{n_2}(-1)^t(-1)^{n_2}\partial_{2,t}'(L_{\{1\}})+(-1)^{n_2+1}\sum\limits_{i=3}^{g}(-1)^{n_2}(-1)^{\sum\limits_{s=1}^{i-1}n_s}\sum\limits_{t=1}^{n_i}(-1)^t\partial_{i,t}'(L_{\{1\}})-\partial_{2,n+1}'(L_{\{1\}})\\
    \end{eqnarray*}

Let us first prove for the case when $n_i=0$ for $i\geq 3$:\fi
    \begin{eqnarray*}
        &&f_{n-1}\circ \partial_n(L)\\
        &=&f_{n-1}(\sum\limits_{t=1}^{n_1}(-1)^t\partial_{1,t}(L)+(-1)^{n_1}\sum\limits_{t=1}^{n_2}(-1)^t\partial_{2,t}(L)+(-1)^{n_1+n_2}\sum\limits_{i=3}^{g}(-1)^{\sum\limits_{s=1}^{i-1}n_s}\sum\limits_{t=1}^{n_i}(-1)^t\partial_{i,t}(L))\\
        &=&\sum\limits_{t=1}^{n_1}(-1)^t\sum\limits_{P\subset\overrightarrow{n_{1}-1}}(-1)^{\#P}(\partial_{1,t}(L))_P+(-1)^{n_1}\sum\limits_{t=1}^{n_2}(-1)^t\sum\limits_{T\subset\vec n_{1}}(-1)^{\#T}(\partial_{2,t}(L))_T\\&&+(-1)^{n_1+n_2}\sum\limits_{i=3}^{g}(-1)^{\sum\limits_{s=1}^{i-1}n_s}\sum\limits_{t=1}^{n_i}(-1)^t\sum\limits_{T\subset\vec n_{1}}(-1)^{\#T}(\partial_{i,t}(L))_T\\
        && \partial_n'\circ f_{n}(L)\\  &=&\sum\limits_{T\subset\vec n_{1}}(-1)^{\#T}\partial_n'(L_{T})\\
        &=& \sum\limits_{T\subset\vec n_{1}}(-1)^{\#T}\sum\limits_{t=1}^{n_1-|T|}(-1)^t\partial_{1,t}'(L_T)+\sum\limits_{T\subset\vec n_{1}}(-1)^{\#T+n_1-|T|}\sum\limits_{t=1}^{n_2+|T|}(-1)^t\partial_{2,t}'(L_T)\\&&+\sum\limits_{T\subset\vec n_{1}}(-1)^{\#T}(-1)^{n_1+n_2}\sum\limits_{i=3}^{g}(-1)^{\sum\limits_{s=1}^{i-1}n_s}\sum\limits_{t=1}^{n_i}(-1)^t\partial_{i,t}'(L_T)  
    \end{eqnarray*}
Notice that most of the terms cancel pairwise as follows: 
\begin{enumerate}
    \item For differential taking on $\mu_i$ with $i\geq 3$, we have obvious cancellation by definitions:\begin{eqnarray*}
&&(-1)^{n_1+n_2}\sum\limits_{i=3}^{g}(-1)^{\sum\limits_{s=1}^{i-1}n_s}\sum\limits_{t=1}^{n_i}(-1)^t\sum\limits_{T\subset\vec n_{1}}(-1)^{\#T}(\partial_{i,t}(L))_T\\
&=&
\sum\limits_{T\subset\vec n_{1}}(-1)^{\#T}(-1)^{n_1+n_2}\sum\limits_{i=3}^{g}(-1)^{\sum\limits_{s=1}^{i-1}n_s}\sum\limits_{t=1}^{n_i}(-1)^t\partial_{i,t}'(L_T)    \end{eqnarray*}
\item For differential taking on $\mu_2$ for the first $n_2$ places:
\begin{eqnarray*}
&&(-1)^{n_1}\sum\limits_{t=1}^{n_2}(-1)^t\sum\limits_{T\subset\vec n_{1}}(-1)^{\#T}(\partial_{2,t}(L))_T\\
&=&
\sum\limits_{T\subset\vec n_{1}}(-1)^{\#T+n_1-|T|}\sum\limits_{t=1}^{n_2}(-1)^t\partial_{2,t}'(L_T)    \end{eqnarray*}
We have $(\partial_{2,t}(L))_T=\partial_{2,t}'(L_T)$ since, by our definition, $f_n$ only sends the part of $L$ attached to $\mu_1$ to $\mu_2$ with order higher than those $n_2$ pairs of framed points originally occupied by $L$. Thus, we only need to compare the signs for the same $t$ and $T$ and they match exactly as follows:
\begin{enumerate}
    \item  for the first row, the exponent is: 
$n_1+t+|T|(n_2-1)+\sum\limits_{i=0}^{|T|-1}(n_1-t_{|T|-i}-i),$ 
    \item for the second row, the exponent is: 
$|T|n_2+\sum\limits_{i=0}^{|T|-1}(n_1-t_{|T|-i}-i)+n_1-|T|+t.$ 
\end{enumerate}
\end{enumerate}

Now, we only need to show $$\sum\limits_{t=1}^{n_1}(-1)^t\sum\limits_{P\subset\overrightarrow{n_{1}-1}}(-1)^{\#P}(\partial_{1,t}(L))_P=\sum\limits_{T\subset\vec n_{1}}(-1)^{\#T}\sum\limits_{t=1}^{n_1-|T|}(-1)^t\partial_{1,t}'(L_T)+\sum\limits_{T\subset\vec n_{1}}(-1)^{\#T+n_1-|T|}\sum\limits_{t=n_2+1}^{n_2+|T|}(-1)^t\partial_{2,t}'(L_T).$$
Observe that, for a fixed $P$, we have:
\begin{equation}\label{keyeq}
(\partial_{1,t}(L))_P=\partial_{1,t-<t,P>}'(L_{P[t]})+\partial_{2,n_2+1+<t,P>}'(L_{P[t]\cup \{t\}}),    
\end{equation}
 where $<t,P>$ is the integer counting how many elements in $P$ that are less than $t$; $P[t]$ is the set obtained from $P$ by replacing each element $x\in P$ greater than or equal to $t$ by $x+1$. To aid the understanding of equation (\ref{keyeq}), we show a demonstration of $f_2$. There are two terms on the left hand side and four terms on the right hand side.  Actually two terms on the right hand side cancels with each other, and the remaining two terms on the right hand side are the same as the two terms on the left hand side one by one(we will demonstrate the cancellation using Figure \ref{hs}).  

 Let us represent $L$ by Figure \ref{L}. This is a local picture inside a collar neighborhood of the small disk where we move $\alpha_1$ and $\alpha_2$ together. The black part is the relative link $L$ attached to $a_{1,1}$, $b_{1,1}$ and  $a_{1,2}$, $b_{1,2}$, and the relative link $L$ goes outside the collar neighborhood through the black square. Here inside the paper is the handlebody and all endpoints of $L$ are considered to be attached to boundary of the handlebody upwards.  Figure \ref{f2} is $f_2(L)$ and Figure \ref{partial2} is $\partial_2(L)$.
\begin{figure}[H]
    \centering
    \includegraphics[width=0.3\linewidth]{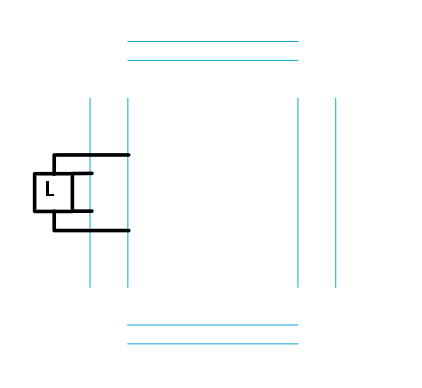}
    \caption{$L$}\label{L}
\end{figure}

\begin{figure}[H]
    \centering
    \includegraphics[width=1\linewidth]{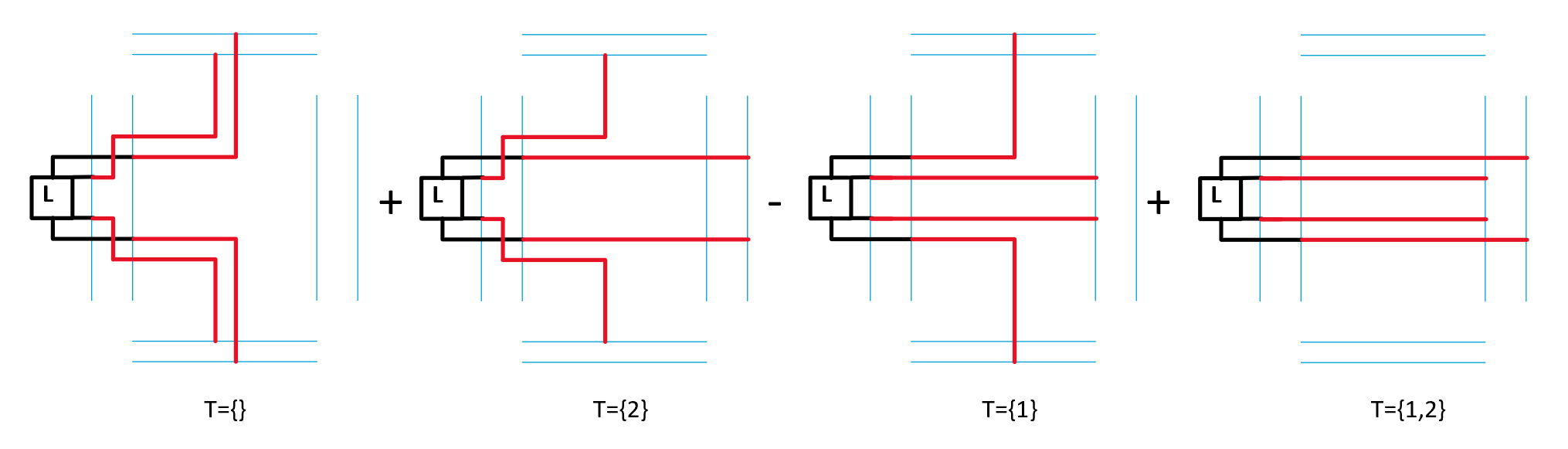}
    \caption{$f_2(L)$}\label{f2}
\end{figure}

\begin{figure}[H]
    \centering
    \includegraphics[width=1\linewidth]{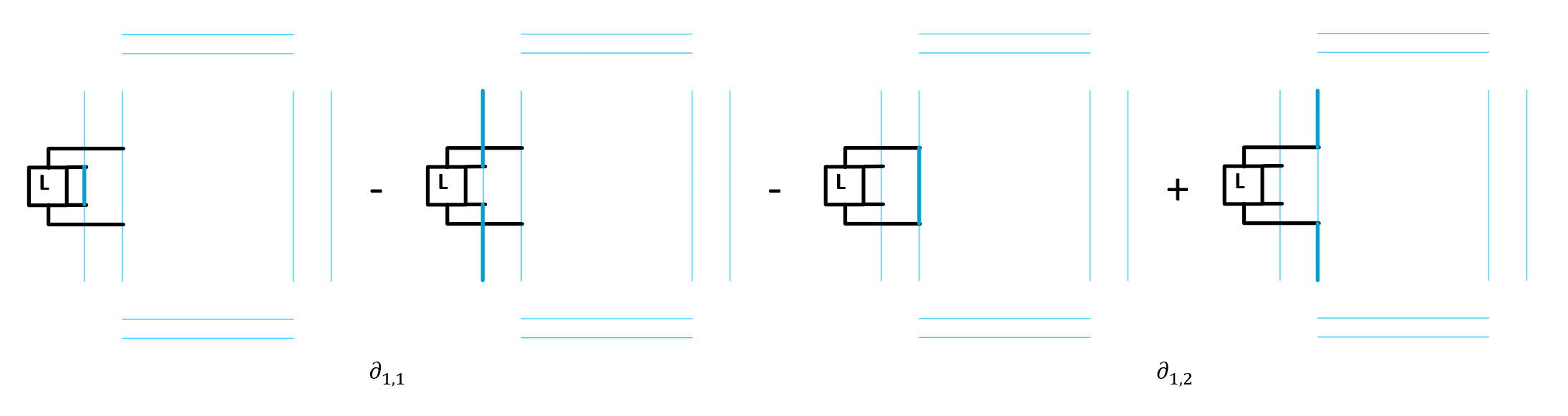}
    \caption{$\partial_2(L)$}\label{partial2}
\end{figure}
In Figure \ref{hs}, red arcs are the effect of $f_2$ and blue arcs are the effect of taking differential. On the left we have $\partial_2'\circ f_2(L)$, where red and blue are above black and blue is above red, and on the right we have $f_1\circ\partial_2(L)$, where red and blue are above black and red is above blue. We have:
\begin{enumerate}
    \item $(\partial_{1,1}(L))_{\emptyset}=\partial_{1,1}'(L_{\emptyset})+\partial_{2,1}'(L_{\{1\}})$
    \item $(\partial_{1,1}(L))_{\{1\}}=\partial_{1,1}'(L_{\{2\}})+\partial_{2,1}'(L_{\{1,2\}})$
    \item $(\partial_{1,2}(L))_{\emptyset}=\partial_{1,2}'(L_{\emptyset})+\partial_{2,1}'(L_{\{2\}})$
    \item $(\partial_{1,2}(L))_{\{1\}}=\partial_{1,1}'(L_{\{1\}})+\partial_{2,2}'(L_{\{1,2\}})$
\end{enumerate}

For $(4)$, we see that the first term of $\partial_{1,1}'(L_{\{1\}})$ cancels with the second term of $\partial_{2,2}'(L_{\{1,2\}})$, where the isotopy occurs in the Northeast and Southeast corners; the second term of $\partial_{1,1}'(L_{\{1\}})$ cancels with the second term of $(\partial_{1,2}(L))_{\{1\}}$, where the isotopy occurs in the Northwest and Southwest corners; and the first term of $\partial_{2,2}'(L_{\{1,2\}})$ cancels with the first term of $(\partial_{1,2}(L))_{\{1\}}$, where we can pull the blue arc in the first term of $\partial_{2,2}'(L_{\{1,2\}})$ back to the position of the blue arc in the first term of $(\partial_{1,2}(L))_{\{1\}}$ below the framed points $a_{2,1}$ and $b_{2,1}$\footnote{This is why we choose the setup so at the beginning of this section.}.
\begin{figure}[H]
    \centering
    \includegraphics[width=1\linewidth]{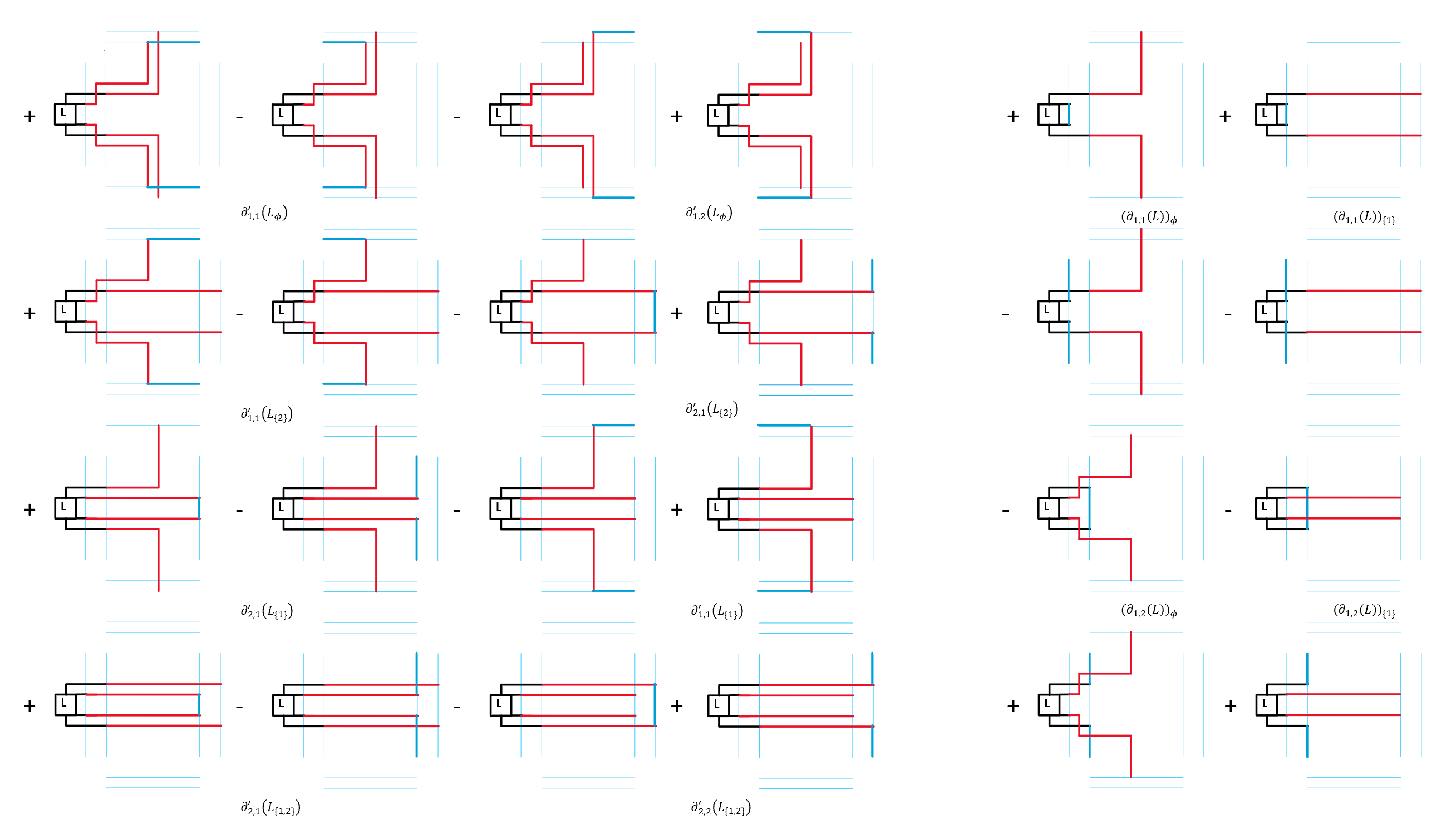}
    \caption{$\partial_2'\circ f_2=f_1\circ\partial_2$}\label{hs}
\end{figure}
 For the sign, let us compute the exponents of the signs, which shows they have the same sign:
 \begin{enumerate}
     \item for $(\partial_{1,t}(L))_P$: 
     \begin{eqnarray*}
     &&t+(n_1-1+n_2)|P|-\sum\limits_{a\in P}a-\sum\limits_{i=0}^{|P|-1}i\\
     &=&t+(n_1+n_2)|P|-|P|-\sum\limits_{a\in P}a-\sum\limits_{i=0}^{|P|-1}i
     \end{eqnarray*}
     \item for $\partial_{1,t-<t,P>}'(L_{P[t]})$:
     \begin{eqnarray*}
     &&t-<t,P>+(n_1-|P[t]|+n_2+|P[t]|)|P[t]|-\sum\limits_{b\in P[t]}b-\sum\limits_{i=0}^{|P|-1}i\\
     &=&t-<t,P>+(n_1+n_2)|P|-(\sum\limits_{a\in P}a+|P|-<t.P>)-\sum\limits_{i=0}^{|P|-1}i\\
     &=&t+(n_1+n_2)|P|-\sum\limits_{a\in P}a-|P|-\sum\limits_{i=0}^{|P|-1}i
     \end{eqnarray*}  
     \item for $\partial_{2,n_2+1+<t,P>}'(L_{P[t]\cup \{t\}})$:
     \begin{eqnarray*}
     &&n_2+1+<t,P>+n_1-(|P|+1)+(n_1-|P|-1+n_2+|P|+1)(|P|+1)-\sum\limits_{c\in P[t]\cup\{t\}}c-\sum\limits_{i=0}^{|P|}i\\
     &=&(n_1+n_2)(|P|+2)-|P|-(\sum\limits_{a\in P}a+|P|-<t.P>+t)-\sum\limits_{i=0}^{|P|-1}i-|P|\\
     &=&-t+(n_1+n_2)(|P|+2)-\sum\limits_{a\in P}a-3|P|-\sum\limits_{i=0}^{|P|-1}i
     \end{eqnarray*}
 \end{enumerate}
Therefore, we finish the proof.
\end{proof}

\begin{theorem}

Let $(H_{g_0},U)$ and $(H_{g_0},U')$ be as above. The two chain complexes are chain isomorphic.
    
\end{theorem}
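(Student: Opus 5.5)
By Lemma \ref{key lemma}, the maps $f_n$ already assemble into a chain map $f_*:(C_*(H_{g_0},U),\partial)\to(C_*(H_{g_0},U'),\partial')$. It remains to show that each $f_n$ is a bijection of $\mathbb{Z}[A^{\pm1}]$-modules. I would do this by writing down an explicit inverse $g_n$ built from the reverse handleslide, rather than by appealing to a general principle.

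\textbf{Construction of $g_n$.} Sliding $\mu_1'$ back over $\mu_2$ recovers $\mu_1$. Concretely, $\alpha_1$ is isotopic, in the complement of the other curves, to a parallel copy of $\alpha_1'$ with the bands $b_{1,2}^t\cup a_{1,2}^t$ removed; equivalently, we reattach the arcs with the opposite orientation of the band. For $L'\in X_n(H_{g_0},U')$ I would set
\[
g_n(L')=\sum_{T\subset\vec n_1}(-1)^{\#T}\,\overline{L'}_T ,
\]
where $\overline{L'}_T$ takes each endpoint pair at $(a_{1,t}',b_{1,t}')$, $t\in T$, and reroutes it to the next free pair on $\mu_2$ via the reversed band. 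The shift operator $S$ then removes the resulting gaps.

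\textbf{The inverse check.} The essential observation is that $f_n$ is triangular with respect to a filtration.
\begin{enumerate}
\item Filter $C_n(H_{g_0},U')$ by $n_2$, the number of pairs on $B_2'$. By construction, $f_n(L)=L_\emptyset+(\text{terms with strictly larger } n_2)$.
\item The term $L_\emptyset$ is the image of $L$ under the homeomorphism of $H_{g_0}$ carrying $U$ to $U'$ near $B_1$. So $L\mapsto L_\emptyset$ is a bijection on the basis sets $X_n$ with fixed $(n_1,n_2,\dots)$.
\item Within each fixed $n$, the index $n_2$ is bounded by $n$. Hence a unitriangular map with respect to this finite filtration is invertible, and $g_n$ is exactly its inverse.
\end{enumerate}
This triangular argument suffices for bijectivity. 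As a consistency check, I would still verify $g_n\circ f_n=\mathrm{id}$ directly. Composing the forward and reverse reroutings of the same pair yields a band that isotopes back across the disk of Figure \ref{n2hs}. Terms indexed by pairs $(T,T')$ with $T\ne T'$ should then cancel in pairs, by the same inclusion--exclusion pattern that appears in equation (\ref{keyeq}).

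\textbf{Main obstacle.} The delicate step is well-definedness on skein classes. $X_n$ consists of links up to isotopy, and the chain groups are quotients by the Kauffman relations, so triangularity must be stated on an actual basis. Theorem \ref{rkbsmsib} provides a basis of relative multicurves, but only for a thickened surface with the points placed on $\partial\Sigma\times\{\tfrac12\}$. I would therefore identify $H_{g_0}=\Sigma_{0,g_0+1}\times I$ and isotope all framed points into that position.

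The point that needs care is whether the filtration by $n_2$ is genuinely preserved by skein relations. It is, because $n_2$ depends only on the endpoints, which are boundary data. Given that, $C_n$ splits as a direct sum over the tuples $(n_1,n_2,\dots)$, and $f_n$ is block unitriangular with invertible diagonal blocks induced by a homeomorphism. This yields the chain isomorphism.
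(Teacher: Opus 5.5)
Your bijectivity argument is correct and is essentially the paper's. The paper filters by the number $n_1$ of pairs on $B_1'$, notes that $f_n(L)=L_\emptyset+(\text{terms with smaller } n_1)$, and inducts on $p$ for surjectivity. Filtering by $n_2$ is equivalent, since $L_T$ moves exactly $|T|$ pairs from $B_1'$ to $B_2$. Your block-unitriangular phrasing is slightly better: the direct sum over $(n_1,n_2,\dots)$, the diagonal $L\mapsto L_\emptyset$, and a finite filtration give injectivity on linear combinations too. The paper's injectivity step only compares two single links $L_1,L_2$. No basis from Theorem~\ref{rkbsmsib} is needed for any of this.

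However, your explicit $g_n$ is not the inverse, and it should be deleted together with the ``consistency check''. Take $n=n_1=1$, $n_2=0$. Then $\#\{1\}=(n_1+n_2)-1=0$, so $f_1(L)=L_\emptyset+L_{\{1\}}$. Your $g_1$ sends $L_{\{1\}}$ to $\overline{L_{\{1\}}}$. It sends $L_\emptyset$ to $L+\overline{L_\emptyset}_{\{1\}}$, again with sign $(-1)^{0}$. Hence
\[
g_1f_1(L)-L=\overline{L_\emptyset}_{\{1\}}+\overline{L_{\{1\}}},
\]
and this is nonzero whatever band you use. To see this, take $L$ a single arc. Specialize $A=-1$ and send a relative link to $(-2)^{\#\{\text{closed components}\}}$; this is well defined on a relative skein module with two boundary points, and it sends the expression above to $1+1=2$.

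Identify the two complexes summand by summand via $L\mapsto L_\emptyset$. Then $f_n=\mathrm{id}+N$ with $N$ strictly raising $n_2$. The inverse is therefore forced to be $\sum_{k\ge 0}(-N)^k$, which is built from the same bands $b_{1,2}^t\cup a_{1,2}^t$ with alternating signs. In the example it is $L'\mapsto L'-L'_{\{1\}}$, not a reversed band with the same sign. Your proof of bijectivity never uses $g_n$, so the argument stands once this claim is removed.
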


\begin{proof}
By Lemma \ref{key lemma}, it remains to show  $$f_n:(C_{n}(H_{g_0},U),\partial_n) \rightarrow (C_{n}(H_{g_0},U'),\partial_n')$$
is a bijection.
\begin{enumerate}
    \item Injectivity: suppose $f_n(L_1)=f_n(L_2)$, which, in particular, implies $(L_1)_{\emptyset}=(L_2)_{\emptyset}$ so that we have $L_1=L_2$.
    \item Surjectivity: notice that $f_n$ sends a relative framed ink $L\in C_{n}(H_{g_0},U)$ to a linear combination of elements in $C_{n}(H_{g_0},U')$ with number of pairs occupied on $B_1'$ less than or equal to $n_1(L)$, so we have filtrations on $F_{p}C_{n}(H_{g_0},U)$ and 
$F_{p}C_{n}(H_{g_0},U')$ respect our chain map $f_n$, where $F_{p}C_{n}(H_{g_0},U)$ and 
$F_{p}C_{n}(H_{g_0},U')$ consisting of span of relative framed links whose $n_1\leq p$. We do induction on $p$. Clearly, when $p=0$, $f_n$ restricting on $F_0C_{n}(H_{g_0},U)$ is surjective. Suppose $f_{n}$ restricting on $F_pC_{n}(H_{g_0},U)$ is surjective for $p<k$. When $p=k$, for any relative framed link $L'\in F_kC_{n}(H_{g_0},U')$, if $n_1(L')<k$, by induction hypothesis, we are done; if  $n_1(L')=k$, then $L'$ can be viewed as $L_{\emptyset}$ for some $L\in F_kC_{n}(H_{g_0},U)$, then $f_n(L)=L'+y$, where $y\in F_{k-1}C_{n}(H_{g_0},U')$ is a linear combination of relative framed links with $n_1<k$. By induction hypothesis,  there exist $x\in F_{k-1}C_{n}(H_{g_0},U')$ such that $f_{n}(x)=y$. Thus, $f_n(L-x)=L'$ restricting on $F_kC_{n}(H_{g_0},U)$ is surjective. Since $f_n$ restricting on $F_pC_{n}(H_{g_0},U)$ for $p=n$ is $f_n$, $f_n$ is surjective for all $n$.
\end{enumerate}

\end{proof}

\section{Stabilization}
In this section, we discuss how stabilization affects on the homology.  Let $(H_{g_0},U)$ be any Heegaard splitting, and $(H_{g_0+1},U')$ be the Heegaard splitting obtained from $(H_{g_0},U)$ by applying one stabilization, where the new attaching curve is denoted by $\mu_{g_0+1}$ with an arbitrary chosen orientation and based points. Then the embedding $\iota:H_{g_0}\hookrightarrow H_{g_0+1}$(see Figure \ref{emb}) induces an inclusion $\iota_{\#}:C_{*}(H_{g_0},U)\rightarrow C_{*}(H_{g_0+1},U')$.
\begin{theorem}
The natural inclusion $\iota_{\#}:C_{*}(H_{g_0},U)\rightarrow C_{*}(H_{g_0+1},U')$ induces a monomorphism on the level of homology.   
\end{theorem}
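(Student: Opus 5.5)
I will exhibit a chain-level left inverse $r_*:C_*(H_{g_0+1},U')\to C_*(H_{g_0},U)$ with $r_*\circ\iota_{\#}=\mathrm{id}$. Then $H(r)\circ H(\iota_{\#})=\mathrm{id}$, so $H(\iota_{\#})$ is injective. The geometric input is the stabilization itself. $H_{g_0+1}$ is $H_{g_0}$ with a $1$-handle $h$ attached along two disks in $\partial H_{g_0}$. The new curve $\mu_{g_0+1}$ meets the cocore (meridian disk) $D$ of $h$ geometrically once. Moreover, $h\cup N(\mu_{g_0+1})$ together with the new $2$-handle forms a $3$-ball, which is the standard cancelling pair.

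The first step is to set up the geometry so that $\mu_{g_0+1}$, its parallel copies $\mu_{g_0+1,n}$ and the framed points $a_{g_0+1,n},b_{g_0+1,n}$ lie in a region of $\partial H_{g_0+1}$ disjoint from $\iota(\partial H_{g_0})\cap\bigcup_{i\le g_0}B_i$. In addition, I arrange that each $\beta_{g_0+1,n}$ crosses $D$ exactly once, while the $\alpha_{g_0+1,n}$ are short arcs avoiding $D$. Since the curves $\mu_1,\dots,\mu_g$ live in $\partial H_{g_0}$ away from the attaching disks, $\iota$ identifies $X_n(H_{g_0},U)$ with the set of elements of $X_n(H_{g_0+1},U')$ that have $n_{g_0+1}=0$ and lie in $\iota(H_{g_0})$. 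With the ordering placing $\mu_{g_0+1}$ last (allowed by Proposition \ref{ordering}), no sign changes occur, and $\iota_{\#}$ commutes with the differentials. This is a routine check.

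Next I define $r_n$ on a relative framed link $L$ in $H_{g_0+1}$ with $n_{g_0+1}(L)=k$. First cap off each of its $k$ pairs of endpoints on $B_{g_0+1}$ using the $\alpha$-arcs. Then, using the fact that $\mu_{g_0+1}$ bounds a disk in the ball formed by $h$ and the $2$-handle, push the result back into $H_{g_0}$. Concretely, each strand through $h$ (that is, each intersection with $D$) is replaced by a band sum along $\beta$, which is the handle slide, exactly as in the Handle Sliding Lemma of Theorem \ref{propkbsm2}. One should instead take $r_n(L)=0$ whenever $k>0$ and define $r$ on the $k=0$ part by this pushing. That simpler candidate is what I would try first.

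The main obstacle is well-definedness and the chain-map property. The pushing map must respect the Kauffman skein relations in $H_{g_0+1}$, and it must be a genuine map on $C_n$ rather than only on skein modules of the closed-off manifold. For the chain condition one needs $r_{n-1}\partial'=\partial r_n$. For $k=1$, the term $\partial'_{g_0+1,1}(L)=L\cup\alpha-L\cup\beta$ must map to $0$ under $r$; this is precisely the slide relation, so the requirement is consistent. For terms with $k\ge 2$, cancellations must occur, analogous to Lemma \ref{key lemma}. If direct pushing fails to be well defined, I would instead mimic the filtration argument from the handleslide theorem. I would filter by $n_{g_0+1}$ and by the geometric intersection with $D$, and show that the associated graded splits with $\iota_{\#}$ as the inclusion of a direct summand whose complement has a contracting homotopy. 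That homotopy would come from attaching a short arc through the new pair $(a_{g_0+1,1},b_{g_0+1,1})$ across $D$, which yields injectivity on homology via the spectral sequence.
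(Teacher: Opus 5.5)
Your ``simpler candidate'' is exactly the retraction $g$ the paper uses: $r_n(L)=0$ whenever $n_{g_0+1}(L)>0$, and on links with $n_{g_0+1}=0$ the map obtained by filling in the cancelling $2$-handle along $\mu_{g_0+1}$. The paper's proof is the same case check by $n_{g_0+1}(L)$ that you begin, so the strategy is right. As written, though, you stop at what you call the main obstacle, and both open points are in fact immediate.

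\textbf{Well-definedness.} The skein relations are local and fix endpoints, so $C_n(H_{g_0+1},U')$ is a direct sum over occupation patterns of the framed points. On the summand with $n_{g_0+1}=0$, your map is induced by the embedding of $H_{g_0+1}$ into $H_{g_0+1}$ with a $2$-handle attached along $\mu_{g_0+1}$, and that manifold is homeomorphic to $H_{g_0}$. A map induced by an embedding fixing the framed points is automatically well defined on relative skein modules. Moreover, $h$ together with the new $2$-handle is a $3$-ball glued to $H_{g_0}$ along a disk disjoint from the $B_i$ ($i\le g$). So the identification can be taken to be the identity off a collar of that disk, which gives $r\circ\iota_{\#}=\mathrm{id}$.

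\textbf{Chain map.} For $k\ge 2$ there is no cancellation ``analogous to Lemma \ref{key lemma}'' to find. Every term of $\partial'_n(L)$ keeps at least $k-1\ge 1$ endpoint pairs on $B_{g_0+1}$ (the $\mu_i$-terms with $i\le g$ keep all $k$). Hence $r_{n-1}\partial'_n(L)=0=\partial_n r_n(L)$ trivially. For $k=1$, only the $\mu_{g_0+1}$-term survives $r$, and it dies by the slide relation, as you note. For $k=0$, all differentials and shifts occur away from the new handle. The signs agree because $n_{g_0+1}=0$, regardless of where $\mu_{g_0+1}$ sits in the ordering, so even the appeal to Proposition \ref{ordering} is unnecessary.

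The capping-off variant and the filtration/spectral-sequence fallback are therefore not needed (the latter is too vague to count as an argument anyway). The proof closes with the simple candidate.
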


\begin{figure}[htbp]
    \centering \includegraphics[width=0.7\linewidth]{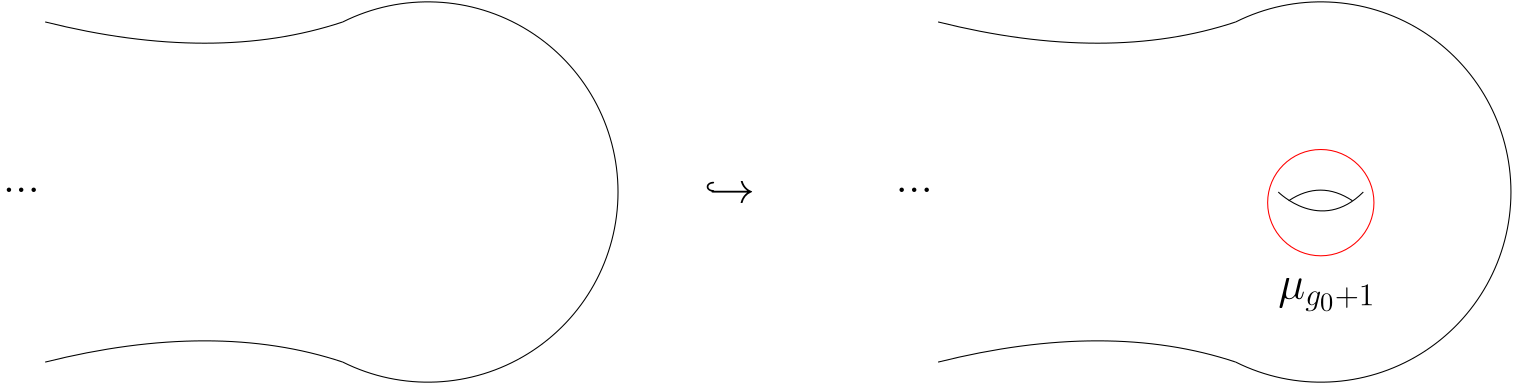} \caption{the embedding of $H_{g_0}$ into $H_{g_0+1}$}\label{emb}
\end{figure}
\begin{proof}
\vspace{-10pt}
By definition, $C_{n}(H_{g_0},U)$ embeds into $C_{n}(H_{g_0+1},U')$ as a chain subcomplex, so we have the standard short exact sequence of chain complexes:
$$0\rightarrow C_{n}(H_{g_0},U) \xrightarrow{f} C_{n}(H_{g_0+1},U')\rightarrow C_{n}(H_{g_0+1},U')/ C_{n}(H_{g_0},U)\rightarrow 0 $$
where $f$ is the homomorphism induced by the embedding of $H_{g_0}$ into $H_{g_0+1}$. To prove the statement, we only need to find a chain map $g:C_{n}(H_{g_0+1},U')\rightarrow C_{n}(H_{g_0},U)$ such that $g\circ f=id$. We define
\[
\begin{array}{r c c c}
  g\colon & C_{n}(H_{g_0+1},U') & \to      & C_{n}(H_{g_0},U) \\
           & L                   & \mapsto  & g(L)
\end{array}
\]
such that
\begin{enumerate}
    \item if at least one pair of endpoints of $L$ is attached to $\mu_{g_0+1}$, $g(L)=0$;
    \item otherwise, when we fill the $g_0+1$'th genus, $H_{g_0+1}$ will become homeomorphic to $H_{g_0}$. $g(L)$ will be the element obtained from $L$ during the above procedure (compare Figure \ref{g}).
\end{enumerate}

\begin{figure}[H]
    \centering
    \includegraphics[width=0.8\linewidth]{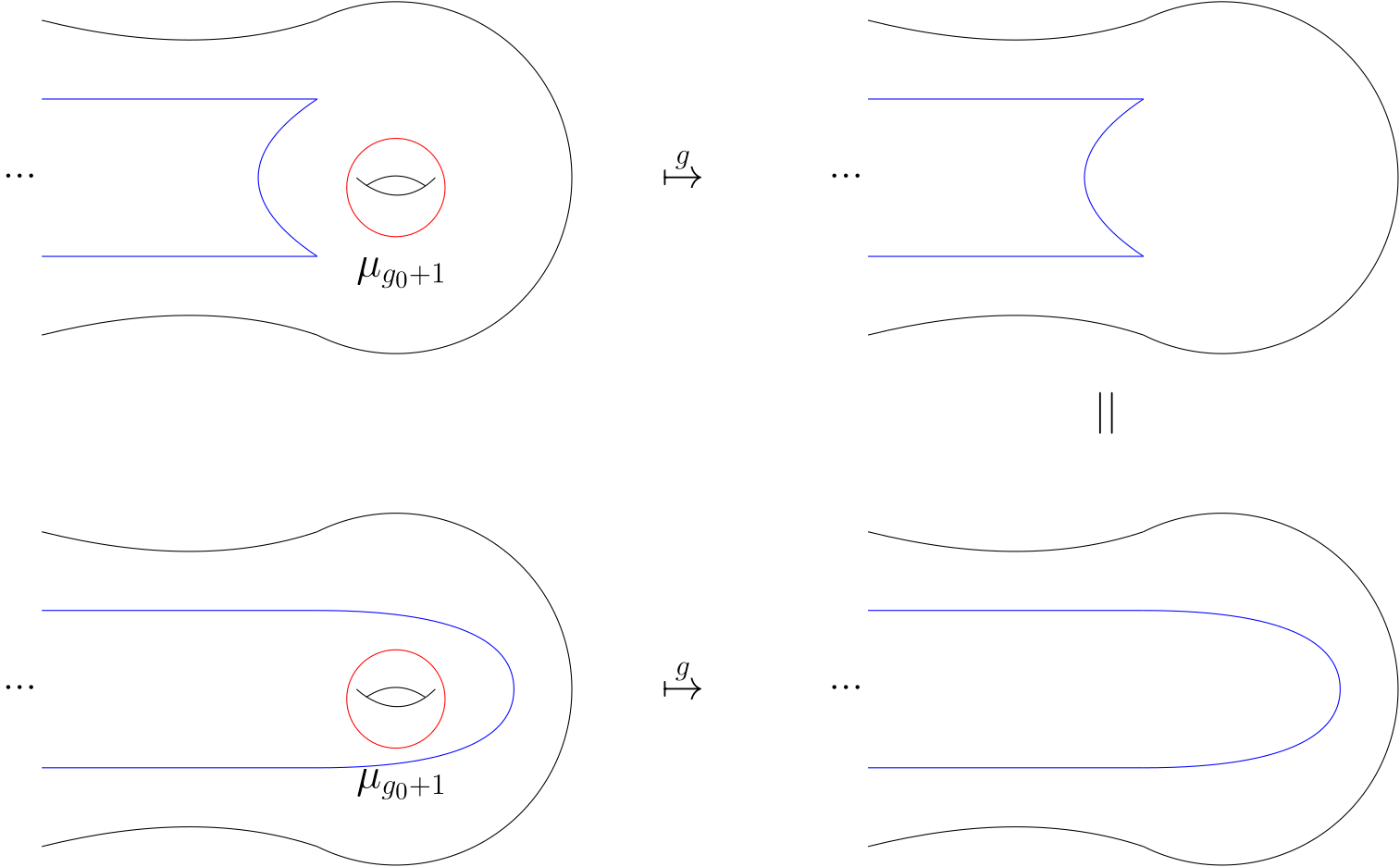}
    \caption{$g$}\label{g}
\end{figure}

Clearly, $g\circ f=id$ since graphically, we just add a genus and then remove it.
Next, let us check that $g$ is a chain map, i.e. we need to compare $\partial_n\circ g$ and $g\circ \partial_n$.
\begin{enumerate}
    \item[1.] Suppose there are more than one pair of endpoints of $L$ connecting to $\mu_{g_0+1}$, we have $\partial_n\circ g(L)=0$. As for $g\circ \partial_n(L)$, $\partial_n(L)$ is a linear combination of elements that has at least one pair of endpoints connecting to $\mu_{g_0+1}$, so $g\circ \partial_n(L)=0$; 
    \item[2.] Suppose that $L$ has only one pair of endpoints connecting to $\mu_{g_0+1}$,  we have $\partial_n\circ g(L)=0$. We have $\partial_n(L)$ is a linear combination of elements that has at least one pair of endpoints connecting to $\mu_{g_0+1}$ except for two terms coming from the differentials concerning $\mu_{g_0+1}$. However, those two terms cancels with each other since we have filled the last genus(see Figure \ref{gpartial}), so $g\circ \partial_n(L)=0$; 
    \item[3.]  Suppose that $L$ has no pair of endpoints connecting to $\mu_{g_0+1}$, the differential has nothing to do with $\mu_{g_0+1}$. After taking the differential and filling the last genus, clearly $\partial_n\circ g=g\circ \partial_n$.
\end{enumerate}
\begin{figure}[H]
    \centering \includegraphics[width=0.9\linewidth]{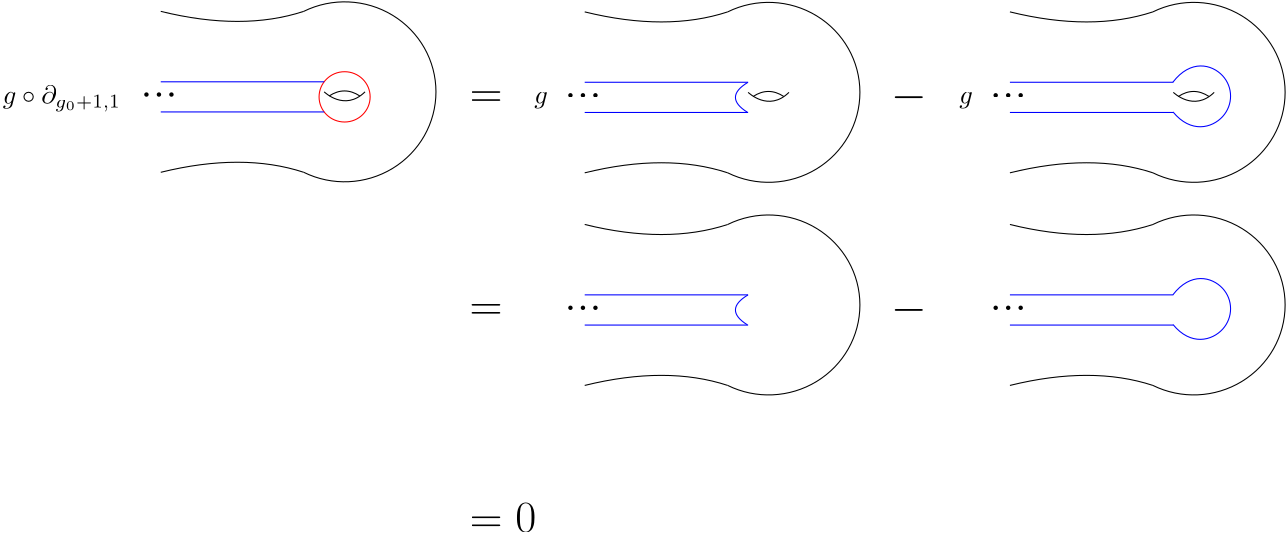} \caption{$g\circ\partial_{g_0+1}(L)$}\label{gpartial}
\end{figure}
\end{proof}

The natural inclusion $\iota_{\#}:C_{*}(H_{g_0},U)\rightarrow C_{*}(H_{g_0+1},U')$ in general does not induce an isomorphism on the level of homology. 
For instance, consider the stabilization of the genus zero Heegaard splitting of $S^3$. The rank of the first homology after stabilization is $1$, while it is zero before stabilization. See Proposition \ref{p1}. 

\section{Examples}
In this section, we give some explicit computation about the genus one Heegaard splittings of the lens spaces $L(p,1)$. 
Let $S_q$ denote the Chebyshev polynomials of the second kind, which satisfy the recurrence relation $S_{q+1}(x) = xS_q(x) - S_{q-1}(x)$, with the initial conditions $S_{0}(x) = 1$ and $S_{1}(x) = x$. It is convenient for us to extend the initial conditions to include negative indices, in which case $S_{-2}(x) = -1$ and $S_{-1}(x) = 0$. Notice that we indeed have $S_{n-2}(x)=-S_{-n}(x)$.
\begin{proposition}\label{p1}

Consider the gunes one Heegaard splitting $(H_1,U)$ of lens spaces $L(p,1)$, where we choose the framed base points $a_{1,i}$, $b_{1,i}$ as shown in Figure \ref{ep1}(this is the case for $L(2,1)$, which is enough to demonstrate the computation) , we have:
\begin{eqnarray*}
    H_{0}((H_1,U))\cong \bigoplus\limits_{i=1}^{\lfloor{\frac{p}{2}}\rfloor+1}\mathbb{Z}[A,A^{-1}];\\
     H_{1}((H_1,U))\cong \bigoplus\limits_{i=1}^{\lfloor{\frac{p}{2}}\rfloor+1}\mathbb{Z}[A,A^{-1}].
\end{eqnarray*}
\end{proposition}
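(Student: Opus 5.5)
The plan is to make the relevant part of the complex explicit using Theorem \ref{ftimesi} and Theorem \ref{rkbsmsib}, viewing $H_1$ as a thickened annulus $\Sigma_{0,2}\times I$. Then $C_0=\mathcal S_{2,\infty}(H_1)$ is free on $\{x^k\}_{k\ge 0}$. Since $g=1$, $C_1$ is the relative skein module with the single pair $a_{1,1},b_{1,1}$. Both $a_{1,1},b_{1,1}$ lie on $\partial H_1$ near the curve $\mu$ of slope $(p,1)$. After isotoping them to one boundary component of the annulus, Theorem \ref{rkbsmsib} gives a basis of $C_1$: an arc $\gamma_k$ joining $a$ to $b$ that winds $k$ times around the core ($k\in\mathbb Z$, arcs on either side counted via the sign of $k$), together with $x^m$ parallel copies of the core. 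To keep the coefficients tractable I will use instead the basis $\{\gamma_k\cdot S_m(x)\}$.

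The first step is to compute $\partial_1$. Up to isotopy in $H_1$, the arc $\alpha_{1,1}$ is a short arc and $\beta_{1,1}$ winds $p$ times around the core. So $\partial_1(\gamma_k)=\gamma_k\cup\alpha-\gamma_k\cup\beta$ becomes the closure of the arc with winding $k$ minus the closure with winding $k\pm p$, shifted by $p$. The closure of a winding-$k$ arc in the solid torus is the $(k,1)$-type curve, which in $\mathcal S(S^1\times D^2)$ equals a power of $A$ times $x^{|k|}$ in the $T$- or $S$-Chebyshev normal form. The key identity I will prove is that $\partial_1(\gamma_k\, S_m(x))$ is, up to units $\pm A^{\ast}$, equal to $S_{m+k}(x)-A^{\ast}S_{m+k+p}(x)$ type combinations, using the product-to-sum formula $x\,S_q=S_{q+1}+S_{q-1}$. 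This is where the extension to negative indices and $S_{n-2}=-S_{-n}$ enter. In particular the image $\mathcal J=\operatorname{im}\partial_1$ is spanned by elements $S_q-\lambda_q S_{-q-2+p}$-like relations (Hoste–Przytycki's lens space relations), and $\partial_0=0$.

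Next I get $H_0$. I identify $C_0/\operatorname{im}\partial_1$ with Hoste–Przytycki's computation of $\mathcal S(L(p,1))$, which is free of rank $\lfloor p/2\rfloor+1$ with basis $S_0,\dots,S_{\lfloor p/2\rfloor}$. Directly, the relations pair $S_q$ with $S_{p-2-q}$ (reflection about $(p-2)/2$) up to units, and add periodicity $S_q\sim S_{q+p}$ up to units. Hence each orbit reduces to one representative in $\{0,\dots,\lfloor p/2\rfloor\}$, and no further relations survive. The point to check is that the relation of an index with itself (the fixed point of the reflection when $p$ is even) gives $(1-\text{unit})S_q$ with the unit equal to $1$, so it produces no torsion.

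Finally I get $H_1=\ker\partial_1$, since $C_2\to C_1$ must also be accounted for, and $H_1=\ker\partial_1/\operatorname{im}\partial_2$. I will compute $\ker\partial_1$ on the basis $\gamma_kS_m$. The map $\partial_1$ sends the doubly indexed family onto relations indexed essentially by $q=m+k$ and $q'=m+k+p$. The kernel is generated by differences $\gamma_kS_m-(\text{unit})\gamma_{k'}S_{m'}$ with equal images, which gives a huge kernel. The main obstacle is therefore $\operatorname{im}\partial_2$. I will use the presimplicial structure, in which $\partial_2=d_1-d_2$ glues one of the two arc-pairs. The plan is to show that $\operatorname{im}\partial_2$ identifies precisely those kernel elements that differ by sliding across $\mu$, leaving one class per orbit of the reflection$+$periodicity action. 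Concretely, I would exhibit a filtration by winding number, show that $\partial_2$ is "upper triangular" with unit leading coefficients outside $\lfloor p/2\rfloor+1$ residual directions, and conclude that the quotient is free of rank $\lfloor p/2\rfloor+1$. The careful sign and $A$-power bookkeeping in this triangularity argument is where I expect most of the work.
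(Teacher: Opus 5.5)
Your treatment of $H_0$ is fine and is implicitly what the paper relies on. By Theorem \ref{propkbsm2}, $C_0/\mathrm{im}\,\partial_1\cong\mathcal S_{2,\infty}(L(p,1))$, and Hoste--Przytycki show this is free of rank $\lfloor p/2\rfloor+1$. Your heuristic ``reflection plus periodicity'' description of the relations in the $S_q$ basis is not needed; in that basis each $\partial_1(c_m)$ is a four-term relation.

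The $H_1$ part, however, is not set up correctly. Theorem \ref{rkbsmsib} only produces crossingless diagrams. With $a,b$ on one boundary component of the annulus, it gives just two isotopy classes of simple arcs from $a$ to $b$, together with disjoint core curves. Your family $\{\gamma_kS_m(x)\}_{k\in\mathbb Z,\,m\ge 0}$ strictly contains a basis and is therefore linearly dependent. Compare the relation $c_mz=Ac_{m+1}+A^{-1}c_{m-1}$ in the paper, where the winding arcs $\{c_m\}_{m\in\mathbb Z}$ alone already form a basis of $C_1$. So your ``huge kernel'' of differences with equal images consists largely of elements that are already zero in $C_1$.

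Moreover, the closure of a winding arc is not a unit times $x^{|k|}$. It has a two-term Chebyshev expansion, which yields $\partial_1(c_m)=-A^{-m-p-2}S_{m+p}(z)+A^{-m-p+2}S_{m+p-2}(z)+A^{m+2}S_m(z)-A^{m-2}S_{m-2}(z)$. From this, $S_{-n}=-S_{n-2}$ gives $\partial_1(c_{-m-p})=-\partial_1(c_m)$. Comparing coefficients (or using triangularity in the top index $m+p$) then shows that $\ker\partial_1$ is exactly the span of the cycles $c_m+c_{-m-p}$, together with $c_{-p/2}$ when $p$ is even. You need this explicit, still infinite-rank, description of the cycles before $\partial_2$ can be brought in.

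The decisive gap is $\mathrm{im}\,\partial_2$. You never describe $C_2$ or compute $\partial_2$ on a single element, yet the rank of $H_1$ is determined entirely by it. Saying $\partial_2$ is ``upper triangular with unit leading coefficients outside $\lfloor p/2\rfloor+1$ residual directions'' restates the conclusion. The equality of the ranks of $H_0$ and $H_1$ is an output of the computation; it cannot be read off from the relations in $C_0$, since the periodicity you invoke lives in $C_0/\mathrm{im}\,\partial_1$, not in $\ker\partial_1$.

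What is needed is the paper's computation:
\begin{enumerate}
\item the basis $t^k(a),t^k(b),z^m(c),z^m(d),z^m(e),z^m(f)$ of $C_2$;
\item the checks that $\partial_2$ vanishes on the first four families and that $\partial_2(z^m(f))=-\partial_2(z^m(e))$, so that $\mathrm{im}\,\partial_2$ is spanned by the $\partial_2(S_k(z)(e))$;
\item the explicit formula for these, obtained from $c_mS_k(z)=\sum_{j=0}^{k}A^{k-2j}c_{m+k-2j}$ and $S_k(z)c_m=\sum_{j=0}^{k}A^{k-2j}c_{m-k+2j}$.
\end{enumerate}
Order the cycles $c_m+c_{-m-p}$ by $|2m+p|$. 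The top term of $\partial_2(S_k(z)(e))$ is then $-A^{-k-3}(c_{k+1}+c_{-p-k-1})$, of order $p+2k+2$, with unit coefficient. This eliminates exactly the cycles of order greater than $p$ and leaves the $\lfloor p/2\rfloor+1$ cycles of order at most $p$ as a free basis of $H_1$. Only because these elements exhaust $\mathrm{im}\,\partial_2$ can one rule out further relations among the survivors.
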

Consider Figure \ref{ep1}, let $c_{m}$ denote the relative curve connecting $a_{1,1}$ and $b_{1,1}$ rotating clockwise $m$ time around the black circle in the center for nonnegative $m$ and rotating counterclockwise $|m|$ time around the black circle in the center for negative $m$. According to Theorem \ref{rkbsmsib}, $\{c_m|m\in\mathbb{Z}\}$ forms a basis for $C_1(H_1,U)$, so we compute $\partial_1(c_m)$ as follows:
\begin{figure}[H]
    \centering
    \includegraphics[width=0.6\linewidth]{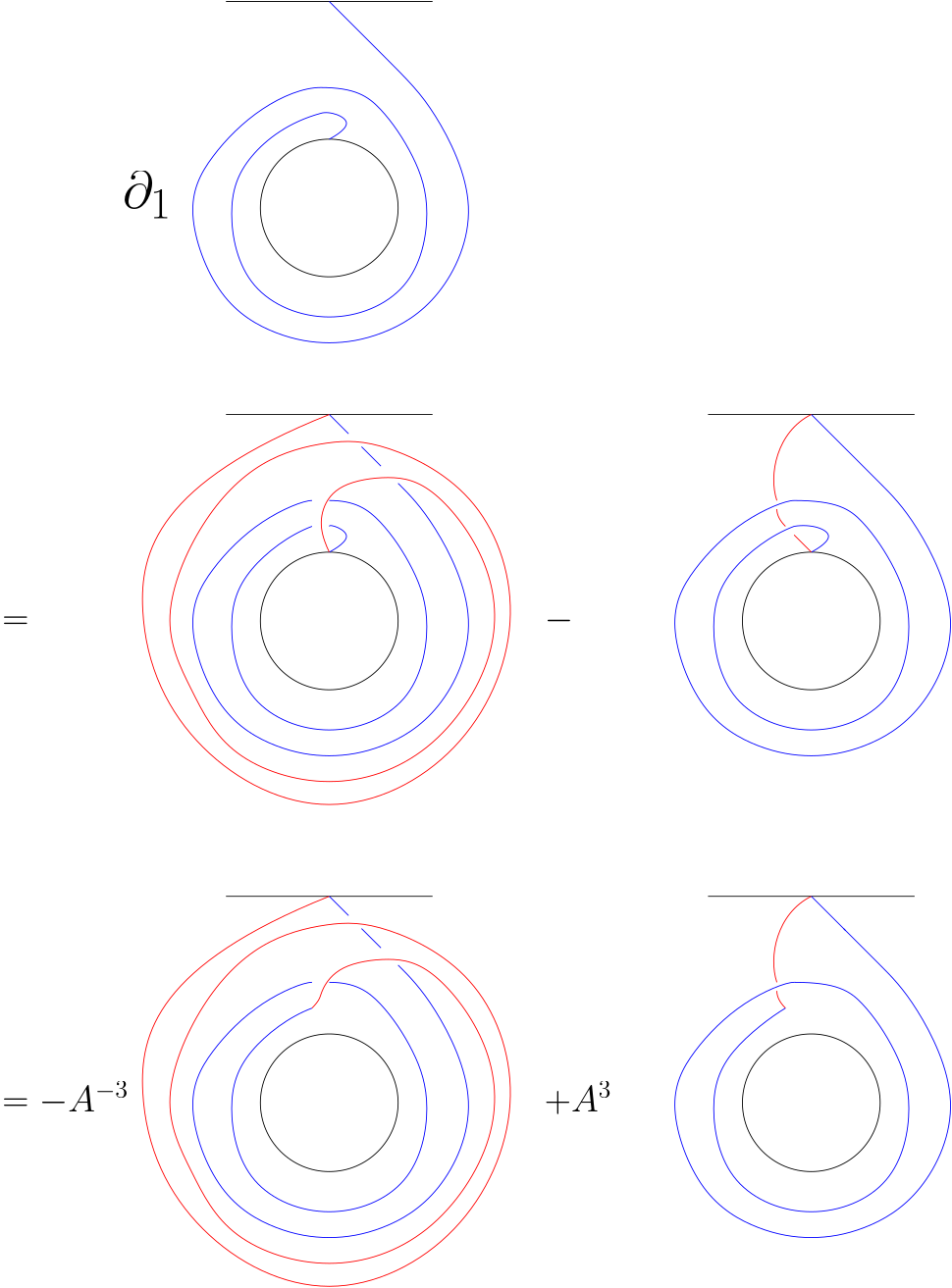}
    \caption{$\partial_1(c_{2})$}\label{ep1}
\end{figure}
According to Lemma 3.6-3.9 in \cite{BKSW}, 
\begin{eqnarray*}
    \partial_{1}(c_m)&=&-A^{-3}N(m+p,0)+A^{3}P(m,0)\\
    &=&-A^{-3}(A^{-m-p+1}S_{m+p}(z)-A^{-m-p+5}S_{m+p-2}(z))+A^{3}(A^{m-1}S_{m}(z)-A^{m-5}S_{m-2}(z))\\
    &=&-A^{-m-p-2}S_{m+p}(z)+A^{-m-p+2}S_{m+p-2}(z)+A^{m+2}S_{m}(z)-A^{m-2}S_{m-2}(z),
\end{eqnarray*}
where $z$ is the boundary parallel circle in the annulus.
\begin{lemma}
When $p$ is odd, $ker\partial_1$ is generated by $X_p=\{c_m+c_{-m-p}|m\in \mathbb{Z}\}$; and when $p$ is even, $ker\partial_1$ is generated by $X_p=\{c_m+c_{-m-p}|m\in \mathbb{Z},m\neq-\frac{p}{2}\}\cup\{c_{-\frac{p}{2}}\}$
    
\end{lemma}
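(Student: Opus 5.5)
The plan is to rewrite $\partial_1$ in a symmetric form that makes the involution $m\mapsto -m-p$ visible, then split $C_1(H_1,U)$ into a symmetric part and a complementary part and use a leading-term argument on the latter. The ambient modules are $C_1(H_1,U)$, free on $\{c_m\mid m\in\mathbb Z\}$ by Theorem \ref{rkbsmsib}, and $C_0(H_1,U)=\mathcal S_{2,\infty}(S^1\times D^2)$, free on $\{z^k\}_{k\ge0}$ by Example \ref{solidtoruskbsm}, equivalently on $\{S_k(z)\}_{k\ge 0}$ since the change of basis is unitriangular.

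\textbf{Step 1: a symmetric formula for $\partial_1$.} Define
$$\varphi(k)=A^{k+2}S_k(z)-A^{k-2}S_{k-2}(z),\qquad k\in\mathbb Z,$$
using the extended Chebyshev polynomials. Put $n=-m-p$ and apply $S_{-n}=-S_{n-2}$ and $S_{-n-2}=-S_n$. This turns the first two terms of the displayed formula for $\partial_1(c_m)$ into $-\varphi(-m-p)$, so that
$$\partial_1(c_m)=\varphi(m)-\varphi(-m-p).$$
Two consequences follow at once:
\begin{itemize}
\item $\partial_1(c_m+c_{-m-p})=0$ for every $m$;
\item when $p$ is even, the fixed point $m=-p/2$ gives $\partial_1(c_{-p/2})=0$.
\end{itemize}
Hence the span of $X_p$ lies in $\ker\partial_1$.

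\textbf{Step 2: splitting $C_1$.} The map $m\mapsto -m-p$ is an involution of $\mathbb Z$. Its orbits are the pairs $\{m,-m-p\}$ with $m>-p/2$, together with the single fixed point $\{-p/2\}$ when $p$ is even. On each two-element orbit, $\{c_m+c_{-m-p},\,c_m\}$ is obtained from $\{c_m,c_{-m-p}\}$ by a unimodular change of basis over $\mathbb Z[A^{\pm1}]$. Therefore $C_1$ is free on $X_p\cup\{c_m\mid m>-p/2\}$. Since $\partial_1$ kills $X_p$, we get
$$\ker\partial_1=\operatorname{span}(X_p)\oplus\ker\bigl(\partial_1|_{\operatorname{span}\{c_m\,:\,m>-p/2\}}\bigr).$$
It therefore remains to show that $\partial_1$ is injective on $\operatorname{span}\{c_m\mid m>-p/2\}$.

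\textbf{Step 3: injectivity by leading terms (the main point).} Work in the basis $\{S_k(z)\}_{k\ge0}$ and fix $m>-p/2$.
\begin{itemize}
\item Rewriting with $S_{-k}=-S_{k-2}$, the term $\varphi(-m-p)$ equals $A^{-m-p-2}S_{m+p}-A^{-m-p+2}S_{m+p-2}$. Its top index is $m+p>p/2\ge 1/2$, so $m+p\ge1$ and $S_{m+p}$ is a genuine nonzero basis element.
\item Every $S$-index appearing in $\varphi(m)$, after the same rewriting, has absolute value at most $\max(m,-m)$, or the term vanishes (as $S_{-1}=0$ does).
\item Since $m>-p/2$ forces $|m|<m+p$, none of these indices reaches $m+p$.
\end{itemize}
So $\partial_1(c_m)$ has leading term $A^{-m-p-2}S_{m+p}$, whose coefficient is a unit. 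Distinct $m$ give distinct leading indices $m+p$, so the images are triangular with unit diagonal with respect to $\{S_k\}$, hence linearly independent over $\mathbb Z[A^{\pm1}]$. Any finite combination $\sum\lambda_m c_m$ in the kernel must then have vanishing coefficient at the largest $m$ present, and induction gives $\lambda_m=0$ for all $m$.

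This gives $\ker\partial_1=\operatorname{span}(X_p)$ in both parities. The only delicate part is the index bookkeeping in Step 3 for negative $m$ with $-p/2<m<0$; there I would check the edge values $m=-1$ and $m=-\lceil p/2\rceil+1$ directly, where $S_{-1}=0$ and $S_{-2}=-1$ occur.
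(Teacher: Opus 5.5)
Your proof is correct. Its first half, showing that $X_p$ consists of cycles via $S_{-n}=-S_{n-2}$, is essentially the paper's Step~1. The converse direction is where you take a genuinely different route.

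The paper takes a general $X=\sum x_mc_m$ and folds $\partial_1(X)$ into the basis $\{S_k(z)\}_{k\ge0}$ to get $C_k=C_{-k-2}$. It then separates the powers $A^{\pm k},A^{\pm(k+2)}$ to read off $x_m=x_{-m-p}$.

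You instead pass to the involution-adapted basis $X_p\cup\{c_m: m>-p/2\}$ and prove that $\partial_1$ is injective on the second part by triangularity. The top index of $\partial_1(c_m)$ is $m+p$, with unit coefficient. That coefficient is $-A^{-m-p-2}$ rather than $A^{-m-p-2}$, which is harmless. Meanwhile $\varphi(m)$ only involves indices at most $|m|<m+p$. This bound holds for every $m$, including $m=-1,0,1$, so the edge cases you flag are fine. The inequality $|m|<m+p$ for $m\ge0$ just uses $p\ge1$.

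What your route buys is validity over the actual ground ring. The paper's step ``since $A$ is an indeterminate, the coefficients of each distinct power of $A$ must vanish'' tacitly treats the $x_m$ as constants. They lie in $\mathbb{Z}[A^{\pm1}]$, so that separation is not justified as written. Your leading-index argument handles arbitrary Laurent-polynomial coefficients.

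It also gives more for free. The leading indices $m+p$ run exactly over $k\ge\lfloor p/2\rfloor+1$, each with a unit coefficient. Hence $H_0=C_0/\operatorname{im}\partial_1$ is free on $S_0(z),\dots,S_{\lfloor p/2\rfloor}(z)$, recovering the $H_0$ statement of Proposition~\ref{p1}.

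The paper's approach, for its part, produces explicit coefficient equations, which is more hands-on. It needs the extra justification above to be complete.
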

\begin{proof}
\begin{enumerate}
    \item Step 1. Any element in $X_p$ is a cycle:$$\partial_1(c_{-m-p})= -A^{m-2} S_{-m}(z) + A^{m+2} S_{-m-2}(z) + A^{-m-p+2} S_{-m-p}(z) - A^{-m-p-2} S_{-m-p-2}(z)$$
Using $S_{-n}(z) = -S_{n-2}(z)$, we obtain
$$\partial_1(c_{-m-p})= A^{m-2} S_{m-2}(z) - A^{m+2} S_m(z) - A^{-m-p+2} S_{m+p-2}(z) + A^{-m-p-2} S_{m+p}(z).$$
Thus $\partial_1(c_m+c_{-m+p})=0$. When $p$ is even, by a direct computation, $\partial_{1}(c_{-\frac{p}{2}})=0$.
\item Step 2. Every cycle has this form:\\
Let \(X = \sum_{m} x_m c_m\) be an arbitrary cycle. 
Collecting the coefficients of \(S_k(z)\) in \(\partial_1(X)\) gives
\[
\partial_1(X) = \sum_{k \in \mathbb{Z}} C_k S_k(z),
\]
where
$$
C_k = -x_{k-p} A^{-k-2} + x_{k-p+2} A^{-k} + x_k A^{k+2} - x_{k+2} A^{k}.
$$
Because the \(S_k(z)\) satisfy \(S_{-n}(z) = -S_{n-2}(z)\) and \(S_{-1}(z)=0\), the module they generate is freely spanned by \(\{S_k(z)\}_{k \ge 0}\). 
Combining the coefficients, we have the coefficient of \(S_k(z)\) (\(k \ge 0\)) is indeed \(C_k - C_{-k-2}\). 
Hence \(\partial_1(X) = 0\) is equivalent to
\[
C_k = C_{-k-2} \quad \text{for all } k \ge 0.
\]
Write both sides explicitly:
\begin{align*}
C_k &= -x_{k-p} A^{-k-2} + x_{k-p+2} A^{-k} + x_k A^{k+2} - x_{k+2} A^{k}, \\
C_{-k-2} &= -x_{-k-p-2} A^{k} + x_{-k-p} A^{k+2} + x_{-k-2} A^{-k} - x_{-k} A^{-k-2}.
\end{align*}
Since \(A\) is an indeterminate, the coefficients of each distinct power of \(A\) must vanish. 
For \(k \ge 1\) the powers \(A^{-k-2}, A^{-k}, A^{k}, A^{k+2}\) are all distinct, giving four families of equations:
\begin{align*}
\text{(1)}\quad & x_{-k-p} = x_k && \text{(from } A^{k+2}) \\
\text{(2)}\quad & x_{-k} = x_{k-p} && \text{(from } A^{-k-2}) \\
\text{(3)}\quad & x_{k-p+2} = x_{-k-2} && \text{(from } A^{-k}) \\
\text{(4)}\quad & x_{-k-p-2} = x_{k+2} && \text{(from } A^{k}) 
\end{align*}
For \(k = 0\) the distinct powers are \(A^{-2}, A^0, A^2\) and we obtain
\[
x_0 = x_{-p} \quad\text{and}\quad x_{-p+2} - x_2 + x_{-p-2} - x_{-2} = 0.
\]

We now show that these relations force \(x_m = x_{-m-p}\) for every integer \(m\).
\begin{itemize}
\item If \(m \ge 0\), take \(k = m\) in $(1)$ (and \(k=0\) for \(m=0\)) to get \(x_{-m-p} = x_m\).
\item If \(m \le -p\), let \(k = -m-p \ge 0\); then $(1)$(and \(k=0\) for \(-m-p=0\)) gives \(x_{-(-m-p)-p} = x_{-m-p}\), i.e.\ \(x_m = x_{-m-p}\).
\item If \(-p < m < 0\) , put \(k = -m\). Then \(1 \le k \le p-1\) and $(2)$ gives \(x_{-k} = x_{k-p}\), i.e.\ \(x_m = x_{-m-p}\).
\end{itemize}
Thus every cycle satisfies \(x_m = x_{-m-p}\) for all \(m \in \mathbb{Z}\). 
Therefore the kernel of \(\partial_1\) is exactly the subspace spanned by $X_p$.
\end{enumerate}
\end{proof}
\newpage
For the second boundary map, we first consider the basis of $C_2(H_1,U)$ as shown in Figure \ref{c2base}:
\begin{figure}[H]
    \centering
    \includegraphics[width=0.6\linewidth]{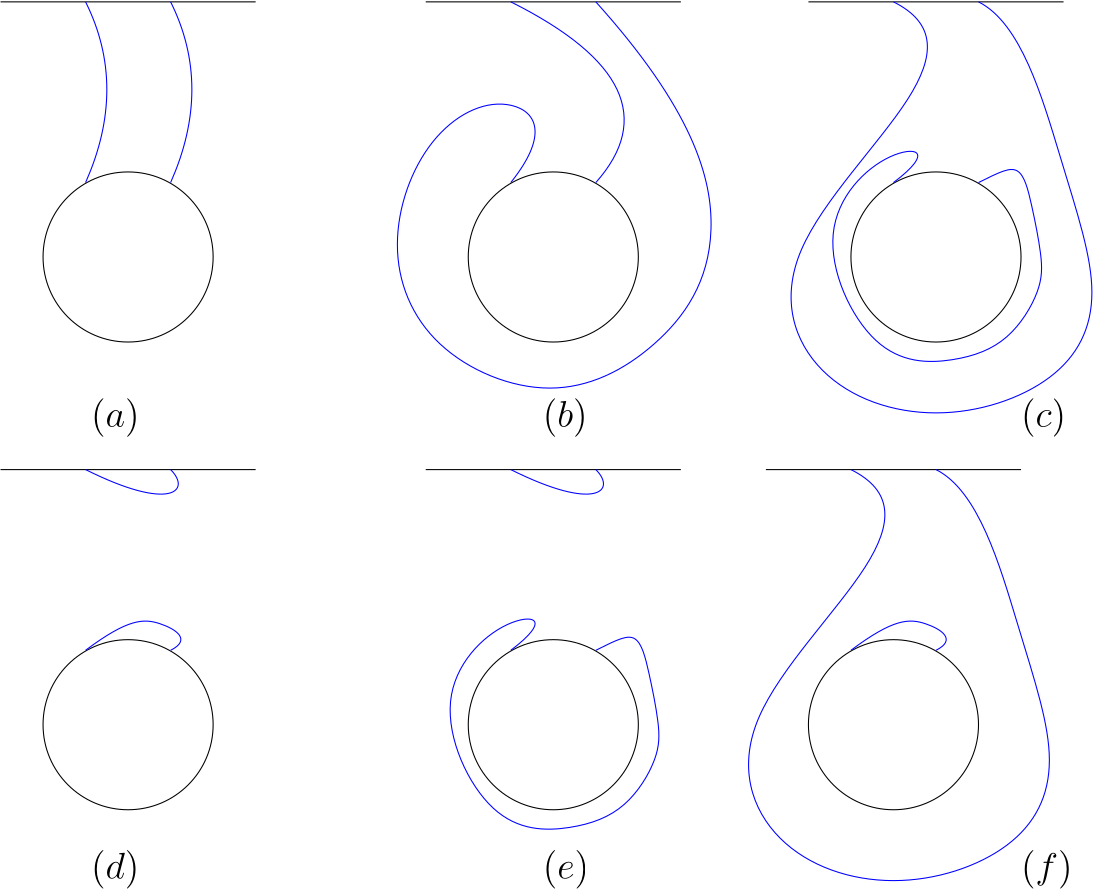}
    \caption{Basis for $C_2(H_1,U)$}\label{c2base}
\end{figure}
All the basis of $C_2(H_1,U)$ consists of $t^{k}(a),t^{k}(b)$ with $t$ the Dehn twist around the black circle in the center, and $z^m(c),z^m(d),z^m(e),z^m(f)$. The following Figures demonstrate that $\partial_2(t^{k}(a))=\partial_2(t^{k}(b))=\partial_2(z^m(c))=\partial_2(z^m(d))=0.$
\begin{figure}[H]
    \centering
    \includegraphics[width=0.6\linewidth]{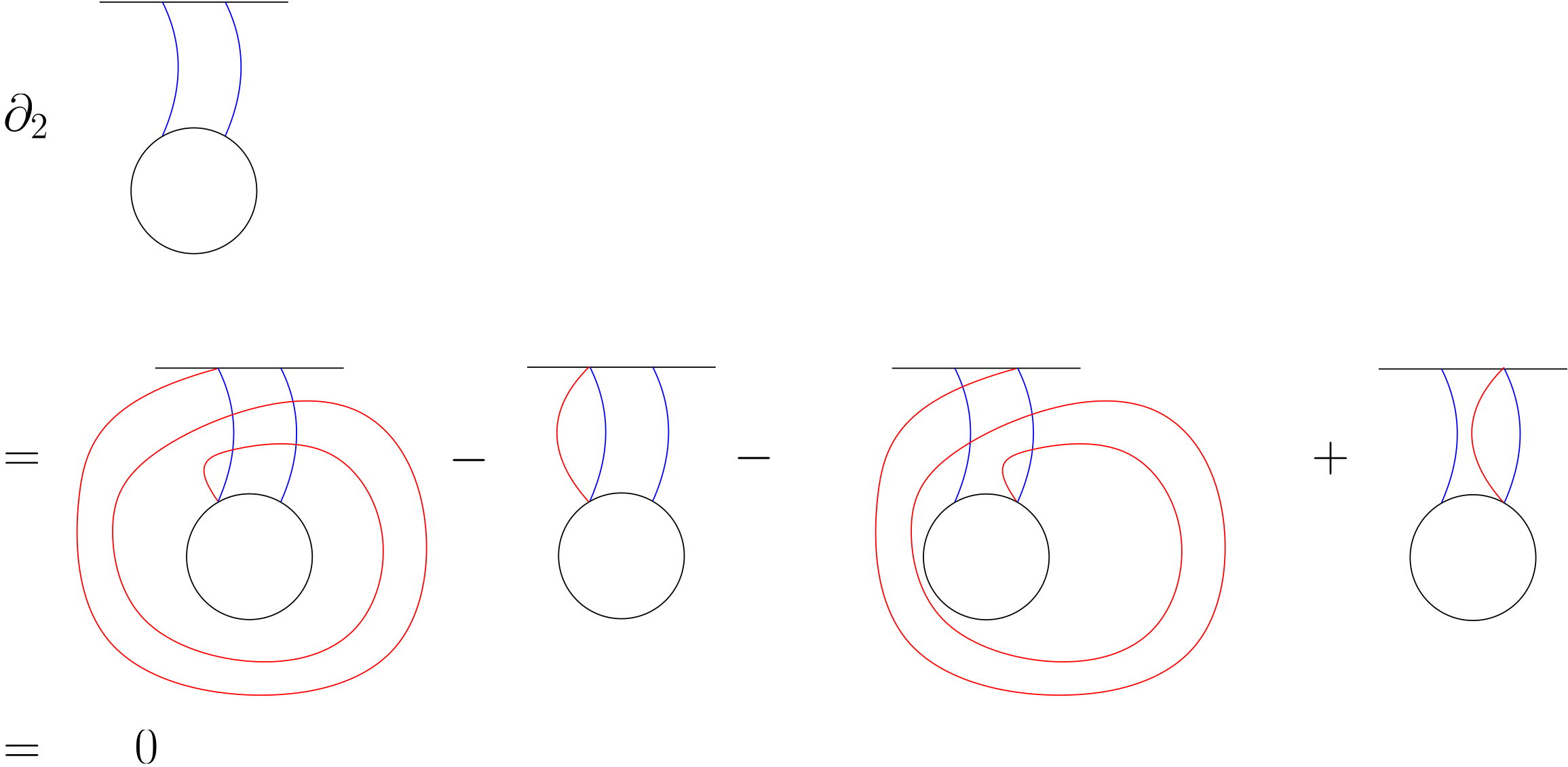}
    \caption{$\partial_2((a))$}\label{0000}
\end{figure}
\begin{figure}[H]
    \centering
    \includegraphics[width=0.6\linewidth]{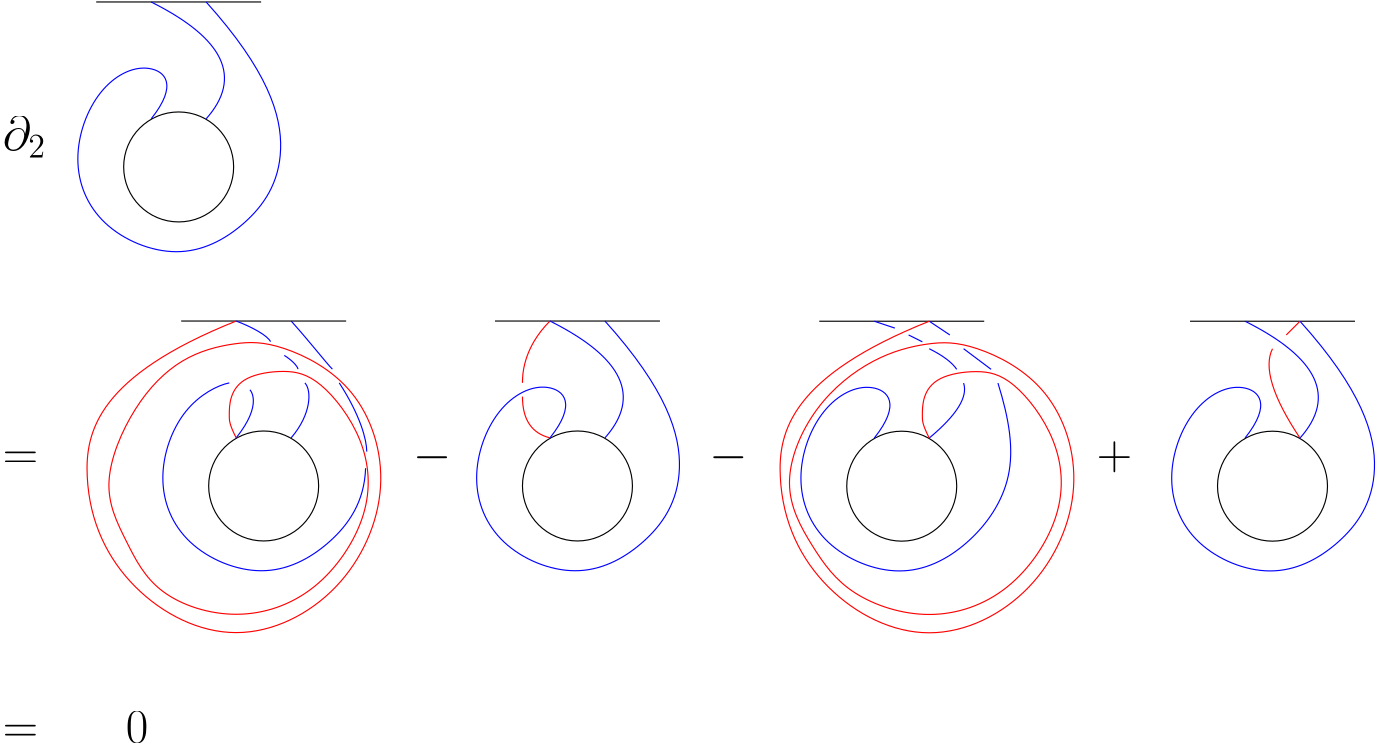}
    \caption{$\partial_2((b))$}\label{0}
\end{figure}
\begin{figure}[H]
    \centering
    \includegraphics[width=0.62\linewidth]{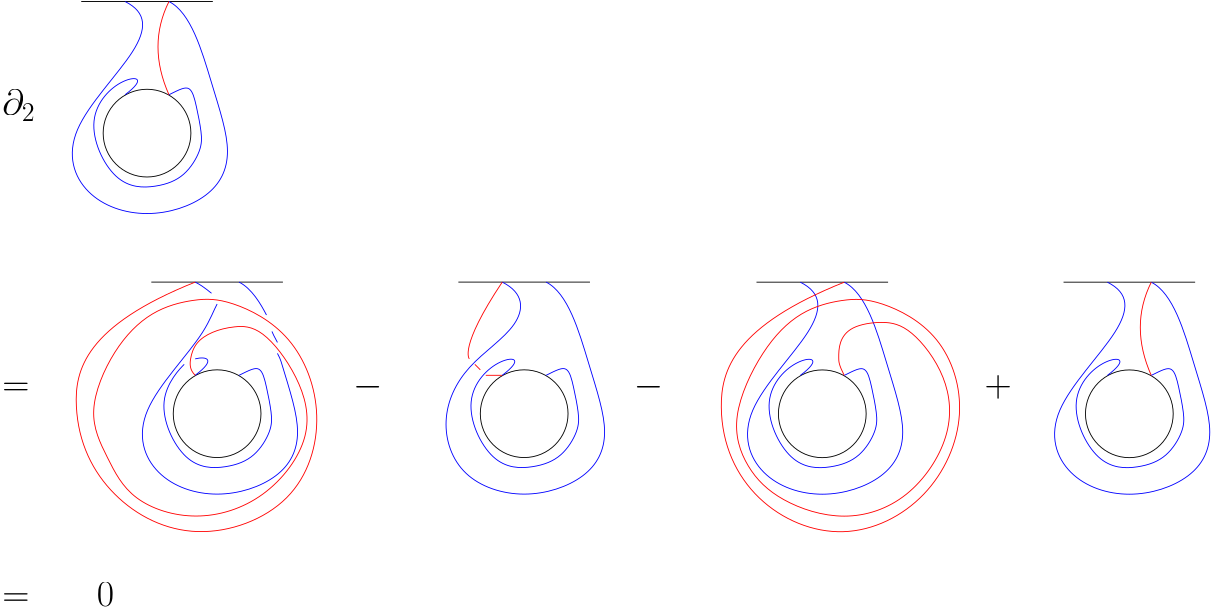}
    \caption{$\partial_2((c))$}\label{00}
\end{figure}
\begin{figure}[H]
    \centering
    \includegraphics[width=0.6\linewidth]{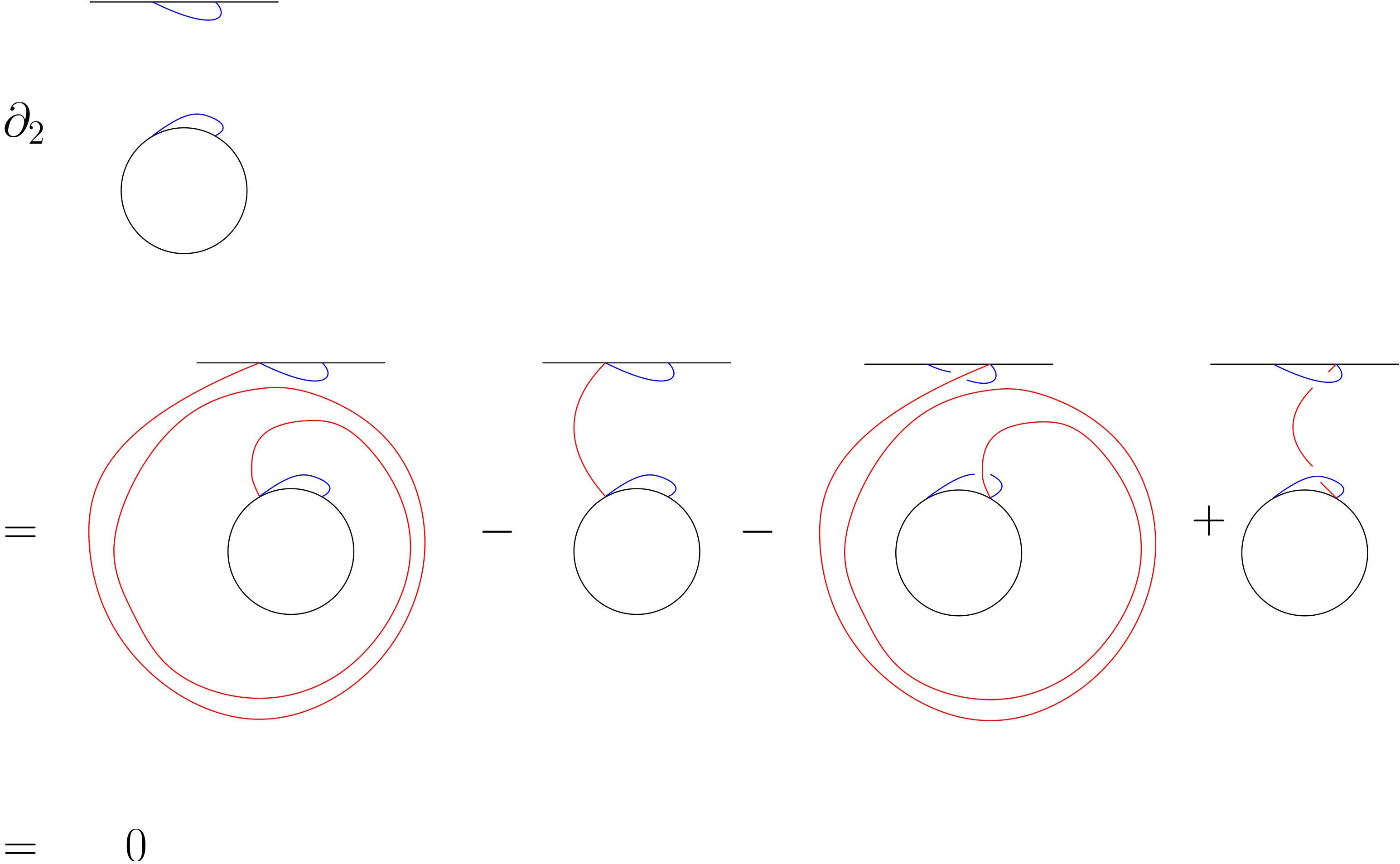}
    \caption{$\partial_2((d))$}\label{000}
\end{figure}
\newpage
Figure \ref{2-1} and \ref{2-2} shows that $\partial_2\{z^m(e)\}=\partial_2\{-z^m(f)\}$:
\begin{figure}[H]
    \centering
    \includegraphics[width=0.6\linewidth]{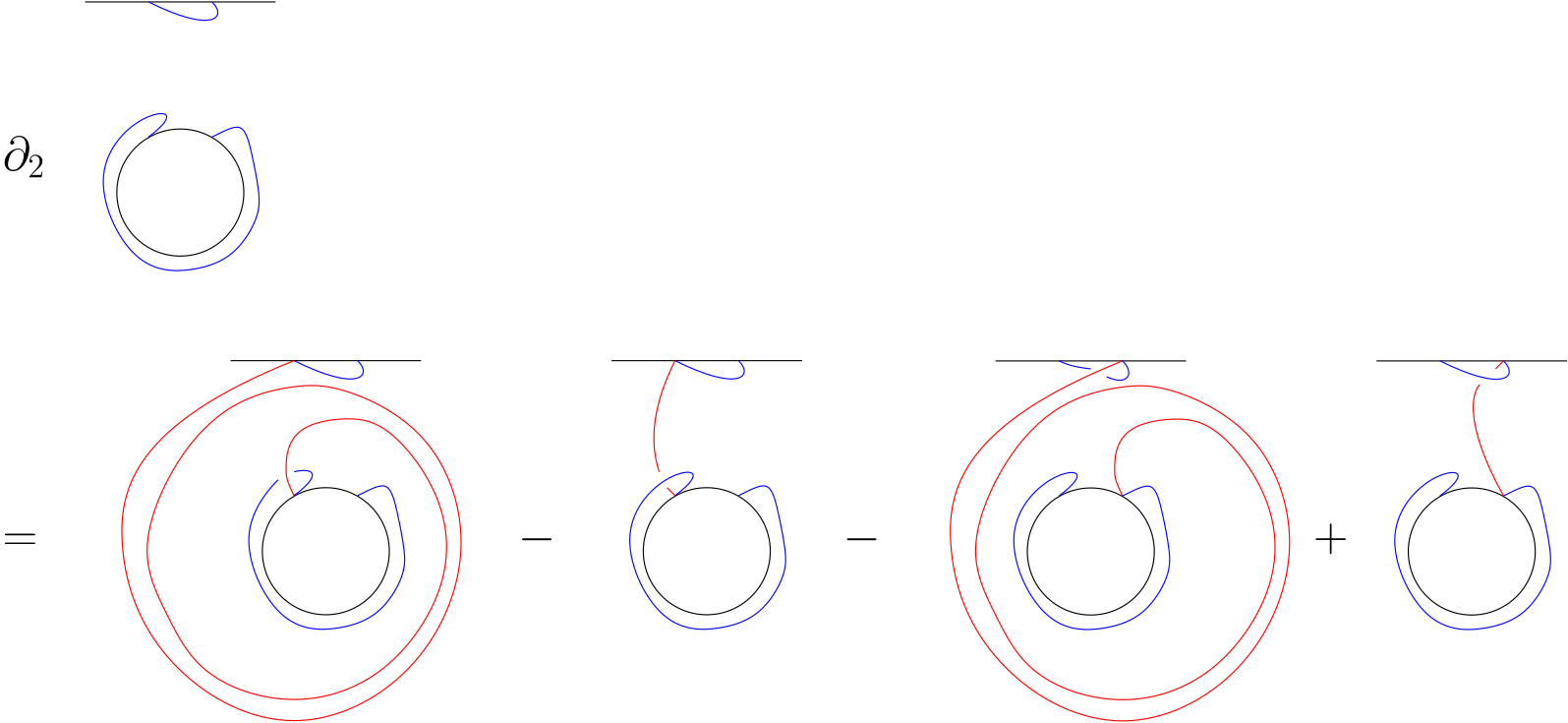}
    \caption{$\partial_2((e))$}\label{2-1}
\end{figure}
\begin{figure}[H]
    \centering
    \includegraphics[width=0.6\linewidth]{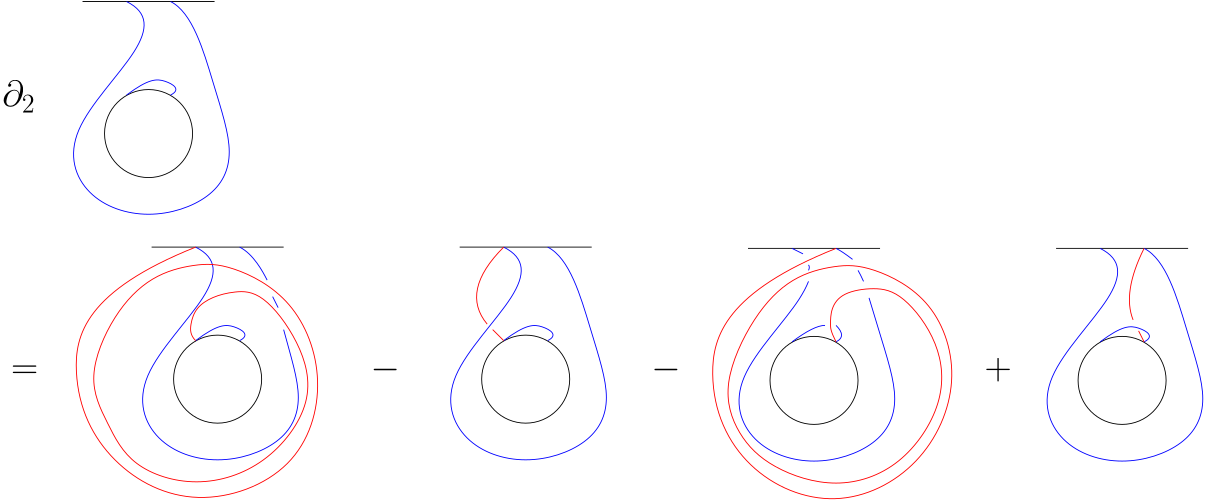}
    \caption{$\partial_2((f))$}\label{2-2}
\end{figure}
Therefore, to compute $\partial_2(C_2(H_1,U))$, we only need to know $\partial_2\{z^m(e)\}$. Compare Figure \ref{z^{2}(e)}, we have: 
\begin{eqnarray*}
    \partial_2(z^k(e))&=& (-A^{-3})c_{-p-1}z^k-(-A^{3})z^kc_{-1}-(-A^3)c_{-p+1}z^k+(-A^{-3})z^kc_{1}\\
    &=& (-A^{-3})(c_{-p-1}z^k+z^kc_{1})-(-A^{3})(c_{-p+1}z^k+z^kc_{-1})
\end{eqnarray*}
From Figure \ref{z^{2}(e)r}, we see that 
$$c_{m}z^k=Ac_{m+1}z^{k-1}+A^{-1}c_{m-1}z^{k-1},$$
and similarly,
$$z^kc_{m}=Az^{k-1}c_{m-1}+A^{-1}z^{k-1}c_{m+1}.$$
These two recursive formula directly lead to Proposition \ref{mk}:
\newpage
\begin{proposition}\label{mk} We have the following formulas,
    \begin{enumerate}
        \item $c_mz^k=\sum\limits_{j=0}^{k}\binom{k}{j}A^{k-2j}c_{m+k-2j}$ and $c_mS_k(z)=\sum_{j=0}^{k} A^{k-2j}\, c_{m + k - 2j}$
        \item $z^kc_m=\sum\limits_{j=0}^{k}\binom{k}{j}A^{k-2j}c_{m-k+2j}$ and $S_k(z)c_m=\sum_{j=0}^{k} A^{k-2j}\, c_{m - k + 2j}$
    \end{enumerate}
\end{proposition}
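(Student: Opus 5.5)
Both formulas follow by induction on $k$ from the two recursions just derived,
$$c_m z^k = A\,c_{m+1}z^{k-1} + A^{-1}c_{m-1}z^{k-1},\qquad z^k c_m = A\,z^{k-1}c_{m-1} + A^{-1}z^{k-1}c_{m+1}.$$
Here $c_m z^k$ means $c_m$ stacked with $k$ parallel copies of the boundary-parallel curve $z$, on one side, so that $c_m z^0=c_m$.

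For the binomial formula in (1) I would induct on $k$, with the case $k=0$ trivial. Assuming the formula for $k-1$ and all $m$, I substitute it into both terms of the recursion:
$$c_mz^k=\sum_{j=0}^{k-1}\binom{k-1}{j}A^{k-2j}c_{m+k-2j}+\sum_{j=0}^{k-1}\binom{k-1}{j}A^{k-2-2j}c_{m+k-2-2j}.$$
Reindexing the second sum by $j\mapsto j-1$ makes both sums run over terms of the form $A^{k-2j}c_{m+k-2j}$. Pascal's rule $\binom{k-1}{j}+\binom{k-1}{j-1}=\binom{k}{j}$ then gives the claim. Statement (2) is identical with $m\pm1$ swapped, which replaces $m+k-2j$ by $m-k+2j$.

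For the Chebyshev version I would not expand $S_k$ in powers of $z$. Instead I use the fact that multiplication by $z$ on one side is linear, so $c_mS_k(z)$ obeys the same three-term recurrence as $S_k$:
$$c_mS_{k+1}(z)=(c_mS_k(z))z-c_mS_{k-1}(z).$$
This needs the products to associate, i.e. $(c_m S_k(z))z=c_m(S_k(z)z)$. That holds because stacking copies of $z$ on the same side is associative and commutative in the annulus region, which I would note explicitly. Set $T_k=\sum_{j=0}^{k}A^{k-2j}c_{m+k-2j}$. It suffices to check that $T_0=c_m$, $T_1=Ac_{m+1}+A^{-1}c_{m-1}=c_mz$, and $T_kz-T_{k-1}=T_{k+1}$ for all $m$.

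By the $k=1$ recursion, $T_kz=\sum_j A^{k-2j}(Ac_{m+k-2j+1}+A^{-1}c_{m+k-2j-1})$. The first family is exactly $T_{k+1}$ minus its last term $A^{-k-1}c_{m-k-1}$. The second family is $\sum_{j=0}^{k}A^{k-1-2j}c_{m+k-1-2j}$, which equals $T_{k-1}+A^{-k-1}c_{m-k-1}$. Subtracting $T_{k-1}$ therefore leaves $T_{k+1}$, as required. Part (2) is handled the same way by the mirror recursion.

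The only genuine subtlety is justifying that the one-step recursion applies to an element already carrying $k-1$ copies of $z$. In other words, the local skein resolution of Figure \ref{z^{2}(e)r}, between the arc of $c_m$ and the outermost copy of $z$, is unaffected by the other parallel copies. I would argue this from the locality of the Kauffman bracket relation: the crossing being resolved lies in a ball disjoint from the remaining copies of $z$, and the resulting arcs remain in the standard basis position up to isotopy.
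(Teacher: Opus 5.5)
Your proposal is correct and is essentially the paper's argument. The paper's proof is just ``a routine induction'' from the two recursions, and you carry it out explicitly: Pascal's rule gives the binomial formulas, and the three-term recurrence $S_{k+1}=zS_k-S_{k-1}$ together with the $k=1$ recursion gives the Chebyshev formulas, with your remarks on associativity of stacking copies of $z$ and on locality making explicit what the paper leaves implicit.
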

\begin{proof}

A routing induction will lead to the formulas.
    
\end{proof}

By Proposition \ref{mk}, replace the generators $\{z^k(e)\}$ by $\{S_{k}(e)\}$, we have:
\begin{eqnarray*}
    &&\partial_{2}(S_{k}(z)(e))\\&=&(-A^{-3})(c_{-p-1}S_k(z)+S_k(z)c_{1})-(-A^{3})(c_{-p+1}S_k(z)+S_k(z)c_{-1})\\
    &=&(-A^{-3})\sum_{j=0}^{k} A^{k-2j}\, (c_{-p-1 + k - 2j}+c_{1 - k + 2j})-(-A^{3})\sum_{j=0}^{k} A^{k-2j}\, (c_{-p+1 + k - 2j}+c_{-1 - k + 2j}).
\end{eqnarray*}
\begin{proof}[{\bf proof of Proposition \ref{p1}}]
The computation above tells us that in the first homology group $H_1(H_{1},U)$, the elements represented by cycles in $X_p$ are related. Here we compute some explicit formular for small $k$:
\[
\begin{aligned}
k=0: & -A^{-3}\bigl(c_{-p-1}+c_{1}\bigr) + A^{3}\bigl(c_{-p+1}+c_{-1}\bigr). \\[6pt]
k=1: & \bigl(A^{2}-A^{-2}\bigr)\bigl(c_{-p}+c_{0}\bigr) - A^{-4}\bigl(c_{-p-2}+c_{2}\bigr) + A^{4}\bigl(c_{-p+2}+c_{-2}\bigr). \\[6pt]
k=2: & \bigl(A^{3}-A^{-1}\bigr)\bigl(c_{-p+1}+c_{-1}\bigr) + \bigl(A-A^{-3}\bigr)\bigl(c_{-p-1}+c_{1}\bigr) \\
      & - A^{-5}\bigl(c_{-p-3}+c_{3}\bigr) + A^{5}\bigl(c_{-p+3}+c_{-3}\bigr). \\[6pt]
k=3:& \bigl(A^{4}-1\bigr)\bigl(c_{-p+2}+c_{-2}\bigr) + \bigl(A^{2}-A^{-2}\bigr)\bigl(c_{-p}+c_{0}\bigr) \\
      & + \bigl(1-A^{-4}\bigr)\bigl(c_{-p-2}+c_{2}\bigr) - A^{-6}\bigl(c_{-p-4}+c_{4}\bigr) + A^{6}\bigl(c_{-p+4}+c_{-4}\bigr).
\end{aligned}
\]
For each element $c_{m}+c_{-p-m}$ $X_p$, we give an order to it by $|2m+p|$. We can easily see that, in general, the cycle $c_{-p-1-k}+c_{1+k}$ can be replaced by cycles with smaller order. Thus the only surviving cycles are
\begin{enumerate}
    \item when $p$ is even, $(c_{-p}+c_{0}),(c_{-p+1}+c_{-1}),(c_{-p+2}+c_{-2}),...,(c_{-\frac{p}{2}-1}+c_{-\frac{p}{2}+1}), (c_{-\frac{p}{2}})$
    \item when $p$ is odd, $(c_{-p}+c_{0}),(c_{-p+1}+c_{-1}),(c_{-p+2}+c_{-2}),...,(c_{-\lfloor\frac{p}{2}\rfloor-1}+c_{-\lfloor\frac{p}{2}\rfloor})$
\end{enumerate}
Therefore, the rank of the first homology is $\lfloor\frac{p}{2}\rfloor+1$
\end{proof}

\begin{figure}[H]
    \centering
    \includegraphics[width=0.5\linewidth]{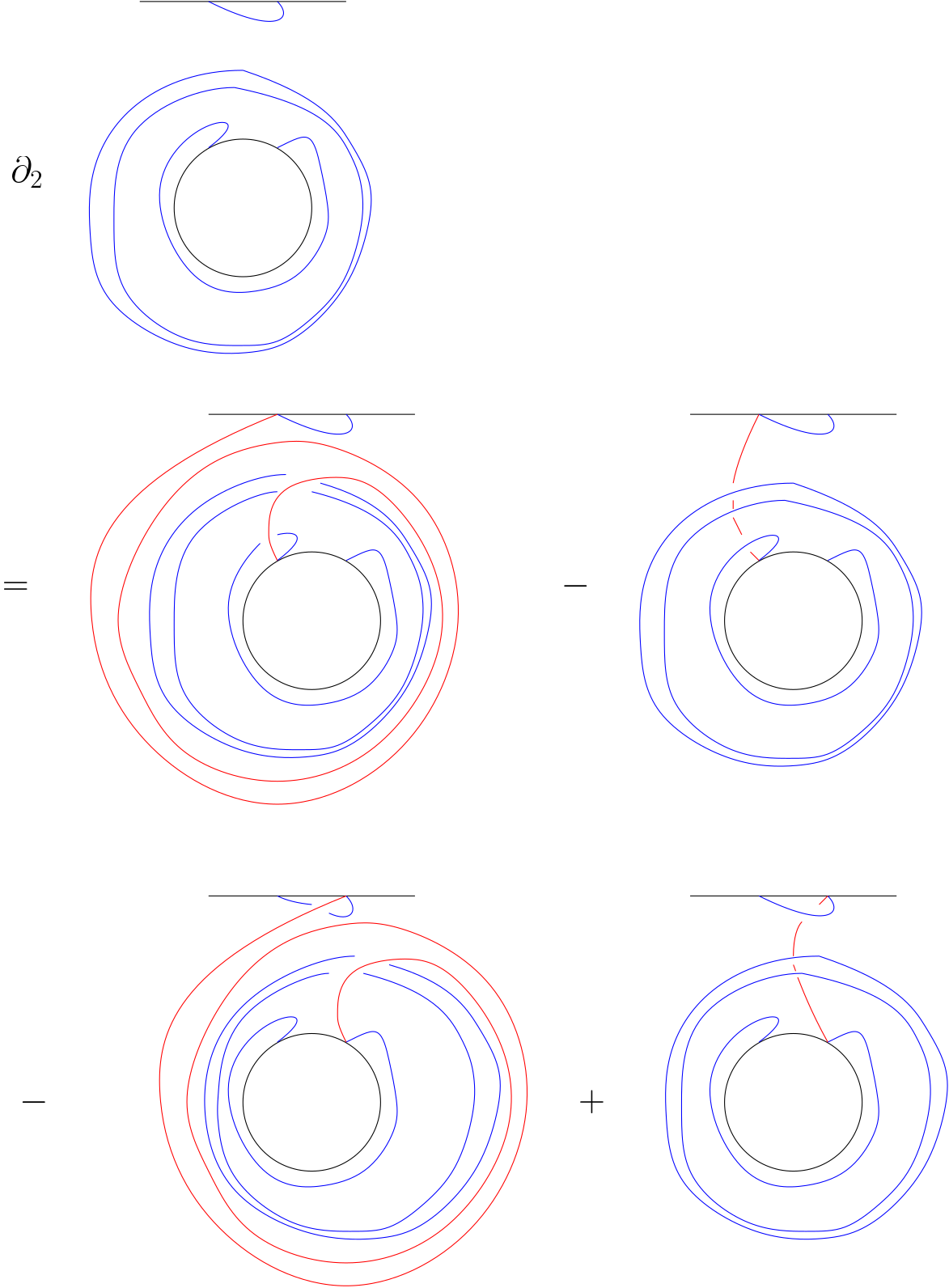}
    \caption{$\partial_2(z^{2}(e))$}\label{z^{2}(e)}
\end{figure}
\begin{figure}[H]
    \centering
    \includegraphics[width=0.5\linewidth]{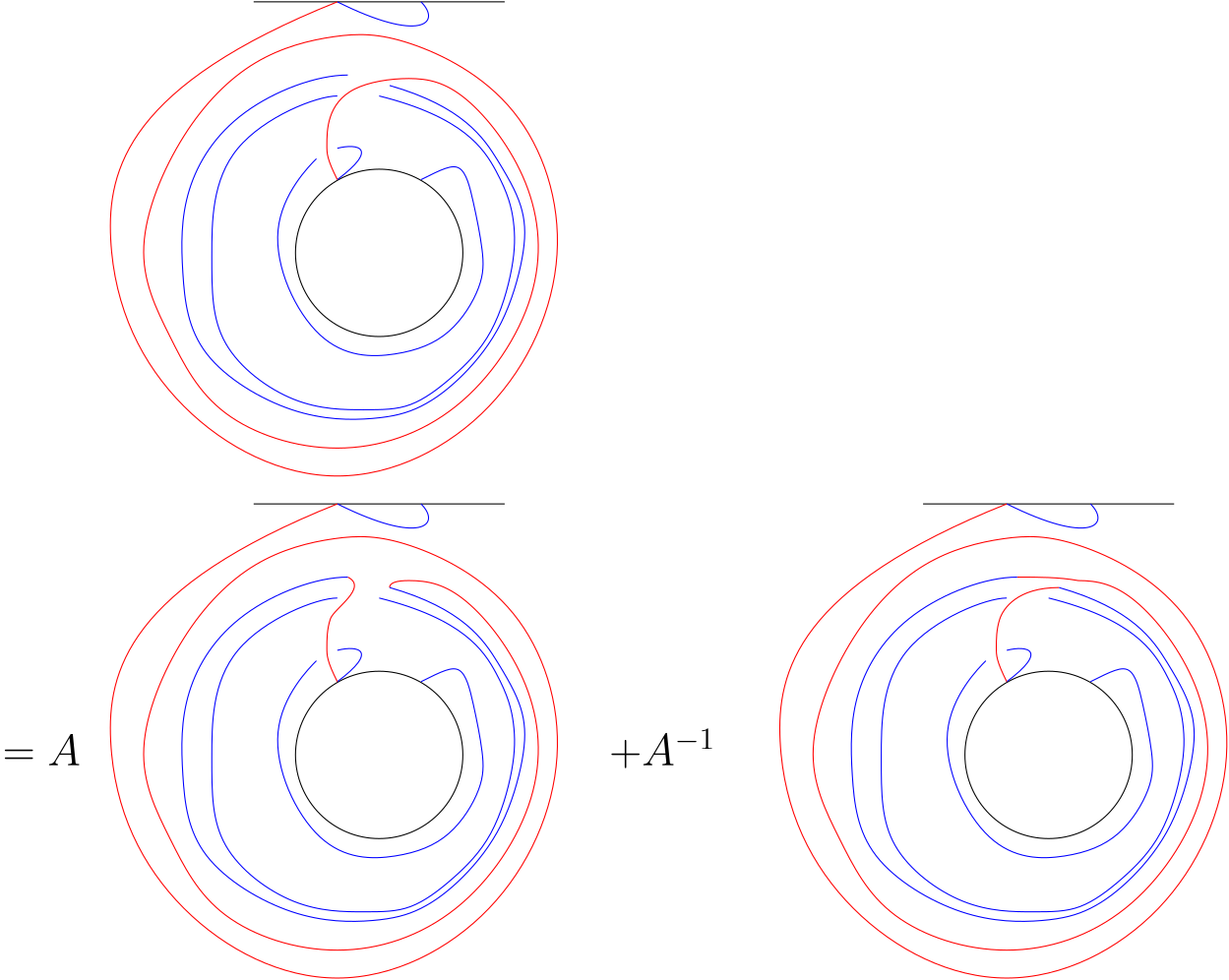}
    \caption{$(-A^{-3})c_{-3}z^2=(-A^{-3})(Ac_{-2}+A^{-1}c_{-4})$}\label{z^{2}(e)r}
\end{figure}

%Consider a filtration $F_p$ on $C_{*}(H_{g_0},U)=\oplus_{n} C_n(H_{g_0},U)$, where $F_pC_{*}(H_{g_0},U)$ is the submodule generated by those relative framed links with $n_1\leq p$. We define a map $f_m:$

‌‌
    \section*{Acknowledgements}

The first author was supported by the National Natural Science Foundation of China (Grant NO. 12471064). The second author was supported by the National Natural Science Foundation of China (Grant No. 11901229, 12371029, 22341304 and W2412041). The authors thank Ruifeng Qiu, Jiajun Wang, Yanqing Zou, and Guannan Guo for helpful and stimulating discussions.


\begin{thebibliography}{9999}

\bibitem[AM]{AM}
M. Abouzaid, C. Manolescu, A sheaf-theoretic model for $SL(2, \mathbb{C})$ Floer
homology, {\it J. Eur. Math. Soc. (JEMS)} 22 (2020), no. 11, 3641–3695.

\bibitem[Bak]{rheasolomarche}
R. P. Bakshi, A counterexample to the generalisation of Witten's conjecture, {\it Contemp. Math.} (to appear). e-print: \href{https://arxiv.org/abs/2205.01653}{arXiv:2205.01653} [math.GT].

\bibitem[BD]{bellettidetcherry}
G. Belletti, R. Detcherry, On torsion in the Kauffman bracket skein module of $3$-manifolds. e-print: \href{https://arxiv.org/abs/2406.17454}{arXiv:2406.17454} [math.GT].

\bibitem[BHM]{BHM} C. Blanchet, N. Habegger, G. Masbaum, P. Vogel, Topological quantum field theories
derived from the Kauffman bracket, {\it Topology} 31(1992), 685–699.
    
\bibitem[BKSW]{BKSW}
Rhea Palak Bakshi, Seongjeong Kim, Shangjun Shi, Xiao Wang,
On the Kauffman bracket skein module of $(S^1\times S^2) \# (S^1\times S^2)$,
Journal of Algebra,
Volume 673,
2025,
Pages 103-137,
ISSN 0021-8693,
https://doi.org/10.1016/j.jalgebra.2025.01.028.


\bibitem[BLP]{blp}
R. P. Bakshi, T. T. Q. Lê, J. H. Przytycki, The Kauffman bracket skein module of the connected sum of two solid tori {\it (draft available on request)}.

\bibitem[BP]{counterhandle}
R. P. Bakshi, J. H. Przytycki, Kauffman bracket skein module of the connected sum of handlebodies: A counterexample, {\it Manuscripta Math.} 167 (2022), no. 3-4, 809–820. \href{https://arxiv.org/abs/2005.07750}{arXiv:2005.07750} [math.GT].

\bibitem[Bul]{sl2cbullock}
D. Bullock,
Rings of $SL_2(\mathbb{C})$-characters and the Kauffman bracket skein module, {\it Comment. Math. Helv.} 72 (1997), no. 4, 521–542. 

\bibitem[BuLo]{knotext}
D. Bullock, W. F. Lo Faro,
The Kauffman bracket skein module of a twist knot exterior. 
{\it Algebr. Geom. Topol.} 5 (2005), 107–118. \href{https://arxiv.org/abs/math/0402102}{arXiv:math/0402102} [math.QA].

\bibitem[BuPr]{smquant}
D. Bullock, J. H. Przytycki, Multiplicative structure of Kauffman bracket skein module quantizations. {\it Proc. Amer. Math. Soc.} 128 (2000), no. 3, 923–931. \href{https://arxiv.org/abs/math/9902117}{arXiv:math/9902117} [math.QA]. 

\bibitem[BW]{BW} F. Bonahon, H. Wong, Representations of the Kauffman bracket skein algebra I: invariants and
miraculous cancellations, {\it Invent. Math.} 204 (1) (2016) 195–243.

\bibitem[DW]{basissurfacetimess1}
R. Detcherry, M. Wolff, A basis for the Kauffman skein module of the product of a surface and a circle. 
{\it Algebr. Geom. Topol.} 21 (2021), no. 6, 2959–2993. \href{https://arxiv.org/abs/2001.05421}{arXiv:2001.05421}[math.GT].

\bibitem[DKS]{dks}
R. Detcherry, E. Kalfagianni, A. S. Sikora, Kauffman bracket skein modules of small $3$-manifolds. e-print: \href{https://arxiv.org/abs/2305.16188}{arXiv:2305.16188} [math.GT]. 

\bibitem[FKL]{FKL}
C. Frohman, J. Kania-Bartoszynska, T. Lê, Unicity for representations of the Kauffman bracket
skein algebra, {\it Invent. Math.} 215 (2) (2019) 609–650.

\bibitem[Gel]{Gel} R. Gelca, Topological quantum field theory with corners based on the Kauffman
bracket, {\it Comment. Math. Helv.}, 72(1997), 216-243.

\bibitem[GJS1]{wittenresolved}
S. Gunningham, D. Jordan, P. Safronov, The finiteness conjecture for skein modules. {\it Invent. Math.} 232 (2023), no. 1, 301–363. \href{https://arxiv.org/abs/1908.05233}{arXiv:1908.05233} [math.QA].

\bibitem[GJS2]{GJS}
Iordan Ganev, David Jordan, Pavel Safronov, The quantum Frobenius for character varieties and multiplicative quiver varieties. {\it J. Eur. Math. Soc.} 27 (2025), no. 7, pp. 3023–3084

\bibitem[GM]{gmsl2cfreegroup}
F. J. González-Acuña, J. M. Montesinos-Amilibia, On the character variety of group representations in $SL(2,C)$ and $PSL(2,C)$. {\it Math. Z.} 214 (1993), no. 4, 627–652.

\bibitem[HP1]{kbsmlens}
J. Hoste, J. H. Przytycki,  The $(2,\infty)$-skein module of lens spaces; a generalization of the Jones polynomial. {\it J. Knot Theory Ramifications} 2 (1993), no. 3, 321–333.

\bibitem[HP2]{s1s2}
J. Hoste, J. H. Przytycki, The Kauffman bracket skein module of $S^1 \times S^2$. {\it Math. Z.} 220 (1995), no. 1, 65–73.

\bibitem[Kau]{sm&j}
L. H. Kauffman, State models and the Jones polynomial. {\it Topology}, 26 (1987), no. 3, 395-407.

\bibitem[KL]{KL} L. Kauffman, S. Lins, Temperly Lieb Recoupling Theory and Invariants of 3
Manifolds, {\it Annals of Mathematics Studies}, No. 134, Princeton University Press,
Princeton, New Jersey, 1994

%\bibitem[McL]{mclendon}
%M. McLendon,
%Detecting torsion in skein modules using Hochschild homology. 
%{\it J. Knot Theory Ramifications} 15 (2006), no. 2, 259–277. e-print: \href{https://arxiv.org/abs/math/0405395}{arXiv:math/0405395} [math.GT]. 

\bibitem[Lic]{Lic} W. B. R. Lickorish, The skein method for three-manifold invariants, {\it J. Knot Theor.
Ramif.}, 2(1993) no. 2, 171–194.

\bibitem[Prz1]{smof3}
J. H. Przytycki, Skein modules of 3-manifolds. {\it Bull. Polish Acad. Sci. Math.} 39 (1991), no. 1-2, 91–100. \href{https://arxiv.org/abs/math/0611797}{arXiv:math/0611797}[math.GT].

%\bibitem[Prz2]{algtop}
%J. H. Przytycki, Algebraic topology based on knots: an introduction. {\it KNOTS '96 (Tokyo).} 279–297, World Sci. Publ., River Edge, NJ, 1997.

\bibitem[Prz2]{fundamentals}  
J. H. Przytycki, Fundamentals of Kauffman bracket skein modules. {\it Kobe Math. J.}, 16(1), 1999, 45-66. \href{https://arxiv.org/abs/math/9809113}{arXiv:math/9809113} [math.GT]. 

\bibitem[Prz3]{connsum}
J. H. Przytycki,
Kauffman bracket skein module of a connected sum of $3$-manifolds.  
{\it Manuscripta Math.} 101 (2000), no. 2, 199–207. \href{https://arxiv.org/abs/math/9911120}{arXiv:math/9911120} [math.GT].

\bibitem[PS]{saofsurfaces}
J. H. Przytycki, A. S. Sikora,
Skein algebras of surfaces. {\it Trans. Amer. Math. Soc.} 371 (2019), no. 2, 1309–1332. \href{https://arxiv.org/abs/1602.07402}{arXiv:1602.07402}[math.QA].

\bibitem[Rob]{Rob} J. Roberts, Skeins and mapping class groups, {\it Math. Proc. Cambridge Phil. Soc.},
115(1994), 53–77.

\bibitem[Tur1]{turaevsolidtorus}
V. G. Turaev,
The Conway and Kauffman modules of a solid torus.
{\it Zap. Nauchn. Sem. Leningrad. Otdel. Mat. Inst. Steklov.} (LOMI) 167 (1988), Issled. Topol. 6, 79–89, 190; translation in {\it J. Soviet Math.} 52 (1990), no. 1, 2799–2805.

\bibitem[Tur2]{Tur2}V. G. Turaev, Algebras of loops on surfaces, algebras of knots, and quantization, {\it Adv.
Ser. in Math.} Physics 9 (1989), ed. C. N. Yang, M. L. Ge, 59–95.

%\bibitem[Wit]{wittenyangmills}
%E. Witten,
%On quantum gauge theories in two dimensions.
%{\it Comm. Math. Phys.} 141 (1991), no. 1, 153–209.

\end{thebibliography}
\end{document}